\newif\ifflag
\newtheorem{lemma}{Lemma}
\newtheorem{corollary}{Corollary}
\newtheorem{proposition}{Proposition}
\newtheorem{theorem}{Theorem}
\newtheorem{remark}{Remark}
\newtheorem{definition}{Definition}
\newtheorem{assumption}{Assumption}
\newcommand{\PP}{\Pr}
\newcommand{\EE}{\Exp}
\newcommand{\Dh}{\hat{D}}
\newcommand{\eps}{\varepsilon}
\newcommand{\sign}{\operatorname{sign}}
\newcommand{\Law}{\mathrm{Law}}
\newcommand{\Support}{\mathrm{Support}}
\newcommand{\R}{\mathbb{R}}
\newcommand{\Z}{\mathbb{Z}}
\newcommand{\us}{u^*}
\newcommand{\ust}{u^{*\T}}
\DeclareMathOperator{\Var}{Var}
\DeclareMathOperator{\Tr}{Tr}
\DeclareMathOperator{\val}{val}
\DeclareMathOperator{\Prob}{\Pr}
\DeclareMathOperator{\poly}{\mathrm{poly}}
\DeclareMathOperator{\Exp}{\mathbb{E}}
\DeclareMathOperator{\Cov}{Cov}
\newcommand{\E}{\Exp}
\newcommand{\T}{\mathsf{T}}
\crefname{lemma}{Lemma}{Lemmas}
\crefname{claim}{Claim}{Claims}
\crefname{theorem}{Theorem}{Theorems}
\crefname{section}{Section}{Sections}
\crefname{proposition}{Proposition}{Propositions}
\renewcommand{\phi}{\varphi}
\newcommand{\inner}[1]{\langle{#1}\rangle}
\newcommand{\floor}[1]{\lfloor{#1}\rfloor}
\newcommand{\ceil}[1]{\lceil{#1}\rceil}
\newif\ifnotes\notestrue
 \definecolor{mygrey}{gray}{0.50}
 \newcommand{\notenamem}[2]{{\textcolor{mygrey}{\footnotesize{\bf (#1:} {#2}{\bf ) }}}}
 \newcommand{\notenamem}[2]{{}}
\title{Integrality Gaps for Random Integer Programs via Discrepancy} \author[1]{Sander Borst\thanks{This project has received funding from the European Research Council (ERC) under the European Union's Horizon 2020 research and innovation programme (grant agreement QIP--805241).}}
\author[1]{Daniel Dadush\protect\footnotemark[1]}
\author[2]{
 	Dan Mikulincer\thanks{Dan Mikulincer was partially supported by a European Research Council grant no. 803084.}}
\affil[1]{Centrum Wiskunde \& Informatica (CWI), Amsterdam,~The~Netherlands\\
 	 \protect\\ \texttt{\{sander.borst,dadush\}@cwi.nl}}
 \affil[2]{MIT, Cambridge, USA\protect\\
	\texttt{danmiku@mit.edu}}
\begin{document}

\maketitle
\begin{abstract}
In this work, we prove new bounds on the additive gap between the value of a
random integer program $\max c^\T x,\ Ax \leq b,\ x \in \{0,1\}^n$ with $m$
constraints and that of its linear programming relaxation for a wide range of
distributions on $(A,b,c)$. Our investigation is motivated by the work of
Dey, Dubey, and Molinaro (SODA '21), who gave a framework for relating the
size of Branch-and-Bound (B\&B) trees to additive integrality gaps.     

Dyer and Frieze (MOR '89) and Borst et al.\ (Mathematical Programming '22), respectively, showed
that for certain random packing and Gaussian IPs, where the entries of $A,c$
are independently distributed according to either the uniform distribution on
$[0,1]$ or the Gaussian distribution $\mathcal{N}(0,1)$, the integrality gap
is bounded by $O_m(\log^2 n / n)$ with probability at least
$1-1/n-e^{-\Omega_m(1)}$. In this paper, we generalize these
results to the cases where the entries of $A$ are uniformly distributed
on an integer interval (e.g., entries in $\{-1,0,1\}$), and where the columns
of $A$ are distributed according to an isotropic logconcave
distribution. Second, we substantially improve the success probability to
$1-1/\poly(n)$, compared to constant probability in prior works
(depending on $m$). Leveraging the connection to Branch-and-Bound, our gap
results imply that for these IPs B\&B trees have size $n^{\poly(m)}$ with high probability (i.e., polynomial for fixed $m$), which significantly extends
the class of IPs for which B\&B is known to be polynomial.        

Our main technical contribution and the key to achieving the above results is
a new linear discrepancy theorem for random matrices. Our theorem gives
general conditions under which a target vector is equal to or very close to a
$\{0,1\}$ combination of the columns of a random matrix $A$. Compared to
prior results, our theorem handles a much wider range of distributions on
$A$, both continuous and discrete, and achieves success probability
exponentially close to $1$, as opposed to the constant probability shown in
earlier results. Our proof uses a Fourier analytic approach, building on the
work of Hoberg and Rothvoss (SODA '19) and Franks and Saks (RSA '20) who
studied the discrepancy of random set systems and matrices respectively.
\end{abstract}

\section{Introduction}

Consider an integer program (IP) in $n$ variables and a fixed number $m$ of
constraints of the form:
\begin{align*}
	\val_\mathsf{IP}(A,b,c) := \max_x         & \quad c^\T x \\
        \text{s.t. } & Ax\leq b,\ x \in \{0,1\}^n.             \tag{Primal IP}\label{primal-ip}
\end{align*}
An important factor controlling our ability to solve IPs is the tightness of
the linear programming (LP) relaxation. A natural way to measure tightness is
the size of the gap 
\[
\mathsf{IPGAP}(A,b,c):=\val_\mathsf{LP}(A,b,c)-\val_\mathsf{IP}(A,b,c),
\]
where $\val_{\mathsf{LP}}(A,b,c)$ relaxes $x \in \{0,1\}^n$ to $x \in
[0,1]^n$. 

In practice, automated methods for tightening LP relaxations such as
presolving and cutting planes are crucial to the performance of modern IP
solvers~\cite{bixby2007progress,achterberg2013mixed}. Presolving refers to
simple inference rules applied to constraints in sequence, which among other
things are used to find implied variable fixings, tighten variable
bounds and strengthen the coefficients of
inequalities~\cite{savelsbergh1994preprocessing}. Cutting planes refers to
additional valid linear inequalities for the IP, which are often generated
from the optimal simplex tableau~\cite{conforti2010polyhedral}. The
effectiveness of cutting planes is very often measured in terms of the
fraction of the integrality gap they close, which helps justify the
integrality gap as a key metric in
practice~\cite{balas1996mixed,fischetti2013approximating,fischetti2016improving}.    

\subsection{Complexity of Branch-and-Bound} 

Recently, Dey, Dubey, and Molinaro~\cite{dey_branch-and-bound_2021} provided
theoretical evidence for the importance of the gap in IP solving, in the context
of solving \emph{random packing} IPs via Branch-and-Bound (B\&B). We state a
generalization of their result, from~\cite{BDHT22}, to the case of random
logconcave IPs: 

\begin{theorem}[\cite{dey_branch-and-bound_2021,BDHT22}]
\label{thm:meta-logcon} 
Let $n \geq \Omega(m)$, $b \in \R^m$, and $\begin{pmatrix} c \\ A
\end{pmatrix} \in \R^{(m+1) \times n}$ be a matrix whose columns are
independent logconcave random vectors with identity covariance. Then, for $G
\geq 0$, with probability at least $1-\Pr_{A,c}[{\rm IPGAP}(A,b,c) \geq
G]-1/\poly(n)$, the best bound first Branch-and-Bound algorithm applied
to~\eqref{primal-ip} produces a tree of size at most $n^{O(m)}
e^{2\sqrt{2nG}}$.
\end{theorem}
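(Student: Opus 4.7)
The plan is to follow the best-bound-first Branch-and-Bound analysis of~\cite{dey_branch-and-bound_2021} and its logconcave extension in~\cite{BDHT21a}. The first step is a deterministic reduction: in best-bound-first BB a node is processed only when its LP value strictly exceeds the current incumbent, and the incumbent is always at most $\val_\mathsf{IP}(A,b,c)$; so on the event $\{\mathsf{IPGAP}(A,b,c) \leq G\}$ every processed node has LP value in the window $[\val_\mathsf{LP}(A,b,c) - G, \val_\mathsf{LP}(A,b,c)]$. Parametrizing each BB node by a pair $(S,\sigma)$, with $S \subseteq [n]$ the set of fixed variables and $\sigma \in \{0,1\}^S$ the partial assignment, and writing $\val^{S,\sigma}_\mathsf{LP}$ for the resulting LP value, the proof reduces to bounding the expectation of
\[
T := \sum_{k=0}^n \sum_{|S|=k}\sum_{\sigma \in \{0,1\}^S} \mathbf{1}\!\left[\val^{S,\sigma}_\mathsf{LP} \geq \val_\mathsf{LP}(A,b,c) - G\right]
\]
by $n^{O(m)}\, e^{2\sqrt{2nG}} / \poly(n)$, after which Markov's inequality delivers the claim with the stated failure probability.

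The heart of the argument is a reduced-cost anti-concentration estimate. Let $x^*$ and $\lambda^*$ be primal and dual optima of the root LP and write $r_i := A_i^\T \lambda^* - c_i$, so $r_i \geq 0$ when $x^*_i = 0$, $r_i \leq 0$ when $x^*_i = 1$, and $r_i = 0$ when $x^*_i \in (0,1)$; at most $m$ basic coordinates can be fractional. Combining strong duality at the root with weak duality for the $(S,\sigma)$-subproblem (using $\lambda^*$ together with the natural box-constraint multipliers) yields the deterministic inequality
\[
\val_\mathsf{LP}(A,b,c) - \val^{S,\sigma}_\mathsf{LP} \;\geq\; \sum_{i \in S}\, |r_i|\, \mathbf{1}\!\left[\sigma_i \neq x^*_i,\ x^*_i \in \{0,1\}\right],
\]
so a kept node with $k'$ \emph{wrong} integer coordinates forces the corresponding reduced costs to sum to at most $G$. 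Conditional on the root basis, the non-basic reduced costs are independent linear functionals of logconcave columns with identity covariance, hence logconcave with variance $1 + \|\lambda^*\|^2$, so a Pr\'ekopa-Leindler bound on sums of i.i.d.\ non-negative logconcave variables with mean $\Theta(\sqrt{1 + \|\lambda^*\|^2}) = \Theta(1)$ yields $\Pr[\sum_i |r_i| \leq G] \leq (O(G)/k')^{k'}$ for $G \ll k'$.

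Summing this against the $\binom{n}{k'}\, 2^{m}$ combinatorial choices (which $k'$ non-basic coordinates in $S$ disagree with $x^*$, plus the free values on the at most $m$ basic coordinates of $S$) and optimizing $k' \approx \sqrt{2nG}$ produces the $e^{2\sqrt{2nG}}$ factor, while a union bound over the $\leq \binom{n}{m} = n^{O(m)}$ candidate root bases absorbs the dependence of $\lambda^*$ on the data. The main obstacle is exactly this uniform anti-concentration step: since $\lambda^*$ depends on all of $A$, the reduced costs outside the basis are not literally independent of $\lambda^*$, so the conditioning must be performed carefully, and one needs an a priori tail bound $\|\lambda^*\| \leq \poly(n)$ (a consequence of identity-covariance concentration) to keep the density estimates polynomial. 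The remaining $\poly(n)$ factors are absorbed into the $1/\poly(n)$ failure probability.
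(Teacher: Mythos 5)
The paper does not prove Theorem~\ref{thm:meta-logcon}; it is cited from \cite{dey_branch-and-bound_2021,BDHT21a}. The closest in-paper argument is the proof of \Cref{cor:branch-and-bound} in \Cref{sec:bnb}, which adapts the same machinery to discrete constraint matrices. That proof black-boxes the combinatorial Branch-and-Bound accounting as \Cref{thm:bnb} (the tree has size $2n|K|+1$ where $K$ is the reduced-cost knapsack at the optimal dual $u^*$), uses \Cref{lem:one-knapsack} (a density-based bound $\E|K|\leq e^{2\sqrt{2nG}}$), and handles the dependence of $u^*$ on the data via \Cref{lem:bnd-norm-bd}, an $\epsilon$-net over dual vectors $\{u:\|u\|\leq R\}$ combined with Markov's inequality and a union bound.

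Your sketch reconstructs the cited argument and hits the right high-level structure: the LP window, the reduced-cost lower bound on $\val_\mathsf{LP}-\val^{S,\sigma}_\mathsf{LP}$, the $(G/k')^{k'}$-type small-ball estimate, and the $e^{2\sqrt{2nG}}$ balance. Two points differ from how the paper handles the corresponding steps, and the second is where the real subtlety lives. First, the paper's anti-concentration input is a \emph{density} bound (maximum density at most $1$), not a mean bound. You assert the conditional mean of $|r_i|$ is $\Theta(1)$, yet with identity covariance of the columns the relevant scale is $\sqrt{1+\|\lambda^*\|^2}$, which you later allow to be $\poly(n)$; these statements are not consistent with each other. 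This is harmless in one direction (a larger spread only sharpens the small-ball bound), but the clean formulation is the density bound, which for a linear functional of a unit-covariance logconcave column is automatically at most $1$ by \Cref{lem:logconcave-density-bounded} --- exactly the hypothesis of \Cref{lem:one-knapsack}.

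Second, and more substantively, you propose to remove the dependence of $\lambda^*$ on the data by a union bound over the $\binom{n}{m}$ candidate root bases, and you state that ``conditional on the root basis, the non-basic reduced costs are independent linear functionals of logconcave columns with identity covariance.'' That is not literally true: conditioning on a basis being optimal also conditions every non-basic reduced cost to have the correct sign (see \Cref{lem:conditionaldist}), so the non-basic columns are not distributed as unconditioned logconcave columns. You flag this yourself but do not resolve it. The paper's \Cref{lem:bnd-norm-bd} sidesteps the conditioning entirely: it fixes an $\epsilon$-net $N$ of $\{u:\|u\|\leq R\}$, bounds $\E[|K(u,G)|]$ for each \emph{deterministic} $u\in N$ (so no conditioning enters), applies Markov plus a union bound over $N$, and finally observes $K(u^*,G)\subseteq K(u',G+\tfrac{1}{n})$ for the nearest net point $u'$. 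The net size $(nR)^{O(m)}$ supplies the $n^{O(m)}$ factor, just as $\binom{n}{m}$ would, but without any conditional-distribution bookkeeping. If you insist on the union-over-bases route, you must show the anti-concentration survives the one-sided conditioning $r_i>0$; the paper does this kind of rejection-sampling analysis for the gap bounds (\Cref{lem:rejection-sampling-cont}) but, by design, never needs it for the tree-size bound.
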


We recall that B\&B proceeds by building a binary search tree whose leaves
represent a partition of the solution space. Each node of the tree is
associated with a subinstance, corresponding to a partial fixing of the
variables to $0$ or $1$, and is labelled with the optimal solution to the LP
relaxation consistent with this fixing. The tree is initialized with a single
node corresponding to the empty fixing which is labelled with the optimal
solution to $\val_\mathsf{LP}(A,b,c)$. B\&B keeps track of the best integer
solution found in the tree, picked from the nodes whose LP solutions are
integral (or possibly found via heuristics), and removes from consideration
all leaf nodes whose LP values are at most the value of this best found
solution (a procedure known as node pruning). At every iteration, B\&B chooses an unpruned
leaf node in the current tree to process using a \emph{node selection rule}.
To process the node, B\&B examines the optimal LP solution $x$ at the node and
chooses a fractional coordinate $x_i \in (0,1)$ using a \emph{branching
rule}. B\&B then creates two children, each one corresponding to fixing $x_i$ to $0$
or $1$, computes the corresponding LP solutions and checks whether these
nodes can be pruned. Once there are no more nodes left to process, the best
found integer solution is the optimal solution to the IP. For more background on B\&B, the interested reader may consult the following standard textbooks~\cite{wolsey1999integer,conforti2014integer}.   

As B\&B is still the backbone of all modern IP solvers such
as CPLEX, Gurobi, SCIP and Mosek, it is natural to try and understand under
which circumstances the complexity of B\&B can be meaningfully upper bounded.
\Cref{thm:meta-logcon} above gives probabilistic upper bounds on the size of
B\&B trees for random logconcave IPs in terms of the integrality gap, where
the branching rule can be arbitrary as long as 'best bound first' node
selection rule is used. Best bound first corresponds to always processing
leaf nodes with the largest LP value first. This rule ensures that we never
process a node whose value is worse than that of the optimal IP solution,
which is crucial for relating the size of the tree to the integrality gap.    

While \Cref{thm:meta-logcon} is relatively general, it is of course only
meaningful for distributions over which strong upper bounds on the
integrality gap are known. Note that the class of logconcave distributions is
very rich and includes, among others, the uniform distribution on any convex
body, and unbounded distributions such the Gaussian and Laplace
distributions. In terms of applications, due to the $n^{O(m)}$ term in the
complexity estimate, they are mainly restricted to the case where the number
of constraints $m$ is constant, which includes fundamental problems such as
knapsack. We discuss these applications together with our contributions in
the next section.   

\subsection{Integrality Gap Bounds and Their Applications} 

For models directly captured by \Cref{thm:meta-logcon}, strong integrality
have been proven for random packing~\cite{dyer_probabilistic_1989,
goldberg_finding_1984,lueker_average_1982} and Gaussian IPs~\cite{BDHT22},
where the entries of $(A,c)$ are either independent uniform $[0,1]$ or
$\mathcal{N}(0,1)$. For random packing IPs, when $b \in (n/4,n/2)^m$, Dyer and
Frieze~\cite{dyer_probabilistic_1989} proved that $\mathsf{IPGAP}(A,b,c) \leq
2^{O(m)} \log^2(n)/n$ with probability at least $1-1/\poly(n)-2^{-\poly(m)}$.
For Gaussian IPs, when $\|b^-\|_2 \leq n/10$ ($b^-$ is the negative part of $b$), Borst et al~\cite{BDHT22}
proved that $\mathsf{IPGAP}(A,b,c) \leq \poly(m) \log^2(n)/n$ with
probability at least $1-1/\poly(n)-2^{-\poly(m)}$. For these models,
\Cref{thm:meta-logcon} implies that B\&B is polynomial for fixed $m$ with
good probability. More precisely, letting $G = O_m(\log^2 n/n)$ be the gap
for these models, the complexity of B\&B grows as $e^{\sqrt{n m G}} =
e^{O_m(\log n)} = n^{O_m(1)}$. 

We remark that the works~\cite{goldberg_finding_1984,dyer_probabilistic_1989}
also introduced a specialized algorithm for solving random packing IPs, with
the same complexity\footnote{\cite{goldberg_finding_1984} examined the case
$m=1$ while~\cite{dyer_probabilistic_1989} handled general $m$.}. At a high
level, this algorithm simply examines all possible ways to round the optimal
LP solution without violating the target gap bound. It is worthwhile to note
that \cite{dey_branch-and-bound_2021} (implicitly) bounds the complexity of
B\&B by showing that its work never exceed the work of the algorithm in~\cite{goldberg_finding_1984,dyer_probabilistic_1989}.

The above results yield some of the very few cases where meaningful estimates
are known on the size of B\&B trees (additional examples will be
discussed in the related works section). Motivated by the scarcity of results
on B\&B, the main focus of this work is to extend the range of distributions
over $(A,b,c)$ for which strong gap upper bounds hold.

Our first result is a substantial generalization of the Gaussian gap bound of
Borst et al~\cite{BDHT22}. The result applies to a wide range of constraint
matrices whose columns are independent mean-zero random vectors, which we
refer to as \emph{Centered IPs}. 

\begin{theorem}[Gap Bound for Centered IPs]\label{thm:ipgap_centered}
For $m \geq 1$, $n \geq \poly(m)$, $b \in \R^m$ with $\|b^-\|_2 \leq O(n)$,
if $c$ has i.i.d.~$\mathcal{N}(0,1)$ entries and the columns of $A$ are
independent isotropic, logconcave random vectors whose
support is contained in a ball of radius $O(\sqrt{\log n} + \sqrt{m})$, then
\begin{align*}
    \Prob\left(\mathsf{IPGAP}(A,b,c) \geq  \frac{\poly(m)(\log n)^2}{n}\right)\leq n^{-\poly(m)}.
\end{align*}
Furthermore, the same result holds if the entries of $A$ are distributed
independently and uniformly in $\{0,\pm 1, \dots, \pm k\}$ and $b \in \Z^m$
with $\|b^-\|_2 \leq O(k n)$, for any $1 \leq k \leq n$. 
\end{theorem}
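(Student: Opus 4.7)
The plan is to follow the rounding blueprint sketched in the introduction, using \Cref{thm:disc-concrete} as the combinatorial workhorse. First, I would solve the LP relaxation to obtain a basic primal-dual optimal pair $(x^*, u^*)$ (after an infinitesimal perturbation of $b$ to avoid degeneracies) and set $x' \in \{0,1\}^n$ to be $x^*$ with its at most $m$ fractional coordinates rounded down. The goal is to find a set $T \subseteq \{i : x^*_i = 0\}$ of small-reduced-cost indices so that $\hat{x} := x' + \mathbf{1}_T$ is feasible and nearly LP-optimal. Writing $c = A^\T u^* + (c - A^\T u^*)$ and noting that basic (hence fractional) coordinates of $x^*$ carry zero reduced cost, the gap becomes
\[
c^\T(x^* - \hat{x}) \;=\; u^{*\T}\!\bigl(A(x^* - x') - A\mathbf{1}_T\bigr) \;+\; \sum_{i \in T}(u^{*\T}a_i - c_i).
\]
If I can arrange $|c_i - u^{*\T}a_i| \leq \tau := \poly(m)\log n/n$ for each $i \in T$ with $|T| = O(\poly(m)\log n)$, and $A\mathbf{1}_T$ matches $A(x^* - x')$ well enough that the first term is negligible, the second sum is at most $\tau|T| = O(\poly(m)(\log n)^2/n)$, which is precisely the target bound.

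Next, I would define the pool of cheap zero-coordinates $S := \{i : x^*_i = 0,\ -\tau \leq c_i - u^{*\T}a_i \leq 0\}$. Since $c_i \sim \mathcal{N}(0,1)$ is independent of $a_i$ and each $\|a_i\| = O(\sqrt{\log n}+\sqrt m)$, Gaussian anti-concentration should yield $|S| = \Omega(n\tau) = \Omega(\poly(m)\log n)$ with probability $1 - n^{-\poly(m)}$. This must be verified uniformly over the $n^{O(m)}$ possible optimal LP bases via a union bound, which is affordable precisely because \Cref{thm:disc-concrete} now enjoys $n^{-\poly(m)}$ failure probability (as opposed to the $\Theta(1)$ guarantee in \cite{BDHT21}). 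By an analogue of \Cref{lem:conditionaldist}, the conditional joint law of $(a_i)_{i \in S}$ factorizes into independent marginals that are approximately isotropic logconcave (resp.\ centered discrete uniform), satisfying the hypotheses of \Cref{thm:disc-concrete} after a constant-factor rescaling.

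Then, I would apply \Cref{thm:disc-concrete} to the submatrix $A_S$ with $\bar n := |S|$ and $p := 1/\poly(m)$ chosen so that $p\bar n \asymp |T|$ and the side conditions $p \leq s/(100 m^5)$, $p^4 = \omega(m^3/\bar n)$ hold, which dictates how large the polynomial in the hypothesis $n \geq \poly(m)$ must be. The target $t$ consists of $b - Ax'$ on tight rows---an integer vector in the discrete case, and of norm $O(m\sqrt{\log n + m})$ in general---padded on non-tight rows by values dominated by the available slack $b - Ax^* > 0$; in either case $\|t\|_2 \leq sp\sqrt{m\bar n}$ for sufficiently large $\poly(m)$, since $\mu = 0$ by symmetry of both distribution classes. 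The theorem then delivers $T \subseteq S$ of size $\Theta(p\bar n)$ with $A\mathbf{1}_T = t$ exactly (discrete) or $\|A\mathbf{1}_T - t\|_2 \leq n^{-\poly(m)}$ (continuous). Feasibility of $\hat x$ is preserved directly in the discrete case, or after slightly tightening $b$ by $n^{-\poly(m)}\mathbf{1}$ before rounding in the continuous case; substituting back into the identity and using $\|u^*\|_2 \leq \poly(n)$ (obtained from LP duality together with $\|b^-\|_2 \leq O(n)$ and the bounded column norms) gives the stated bound with total failure probability $n^{-\poly(m)}$.

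The hardest step will be establishing the conditional-distribution claim uniformly over LP bases. Conditioning on $i \in S$ reweights the marginal of $a_i$ by a Gaussian factor in $u^{*\T}a_i$, which shifts the mean and shrinks the covariance along $u^*$; to invoke \Cref{thm:disc-concrete} I would need an a priori high-probability bound on $\|u^*\|_2$ ensuring that this perturbation is mild enough that rescaling restores approximate isotropy, which in turn must survive the union bound over the $n^{O(m)}$ possible bases. A secondary, purely technical nuisance is that the continuous-case residual $n^{-\poly(m)}$ slack violation must be absorbed by pre-tightening $b$, which degrades $\val_\mathsf{LP}$ by at most $\|u^*\|_2 \cdot n^{-\poly(m)}$, negligible on the scale of the target gap.
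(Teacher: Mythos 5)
Your proposal follows the same high-level blueprint as the paper (round $x^*$ down, find a small-reduced-cost set $T \subseteq N_0$ via the discrepancy theorem, and bound the gap with complementary slackness), but two places you flag as "hard" or hand-wave past are precisely where the paper's proof does its nontrivial work, and as written your argument has real gaps there.

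First, your claim that $\|u^*\|_2 \le \poly(n)$ suffices is wrong, and your claim that such a bound "follows from LP duality together with $\|b^-\|_2 = O(n)$ and bounded column norms" is not justified. The paper proves $\|u^*\|_2 \le 32$, an absolute constant, via \Cref{lem:embedding} (a uniform-over-the-sphere lower bound on $\|(v^\T W)^+\|_1$, requiring its own net/union-bound argument) combined with \Cref{lem:cubebound}; this is \Cref{lem:solution-props}. The constant bound is not a convenience but a necessity: the width of the strip $\{|u^{*\T}a| \le C\}$ onto which the zero-columns are effectively restricted is $C = \Theta(\|u^*\|)$, so the rejection-sampling (or, in your language, reweighting-and-rescaling) acceptance probability scales like $\exp(-\Theta(\|u^*\|^2))$ — a polynomial $\|u^*\|$ would render the pool $Z$ empty. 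Similarly, your direct Gaussian-anti-concentration estimate of $|S| = \Omega(n\tau)$ implicitly assumes $\|u^*\| = O(1)$ so that the density of $u^{*\T}a_i - c_i$ near $0$ is $\Omega(1)$.

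Second, conditioning directly on $\{-\tau \le c_i - u^{*\T}a_i \le 0\}$ tilts the law of $a_i$ by the Gaussian density at $u^{*\T}a_i$, and you propose to "rescale to restore approximate isotropy." This is not what the paper does, and it is not clear it can be made to work as stated: in the discrete case the tilted law is no longer uniform on an integer interval, so the concrete \Cref{thm:disc-concrete} does not apply — one must invoke the general \Cref{thm:main}, which requires verifying admissibility (\Cref{def:addmis}) and anti-concentration (\Cref{def:AC}) for the tilted measure, and neither property is obviously inherited. The paper sidesteps the tilt entirely: \Cref{lem:rejection-sampling-cont} uses rejection sampling to make the accepted columns have the law of $B' = B \mid \{|u^{*\T}B| \le C\}$ (no Gaussian reweighting), and \Cref{lem:restriction}/\Cref{lem:sample-mean-zero} then verify admissibility, anti-concentration, and zero mean for $B'$ (or a further rejection-sampled $B''$ in the logconcave case). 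These lemmas are the technical core of the argument and your proposal does not replace them. Finally, your plan to union-bound over $n^{O(m)}$ optimal bases is heavier machinery than the paper's \Cref{lem:conditionaldist}, which conditions on $N_0$ and the non-$N_0$ columns directly to obtain the conditional independence and conditional law of the zero-columns in one shot; the union bound is not obviously unworkable, but the conditional law of the non-basic columns given "basis $B$ is optimal" is itself a nontrivial object that you would still need something like \Cref{lem:conditionaldist} to pin down.
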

We recall that a random vector $X \in \R^m$ is
isotropic if $\E[X]=0$ (mean zero) and $\E[(X-\E[X])(X-\E[X])^\T]=\mathrm{I}_m$ (identity
covariance).  Since one can apply an affine transformation to any full dimensional
random variable to make it isotropic, this condition may be regarded as a useful normalization. When $A$ has iid $\mathcal{N}(0,1)$ entries, we
remark that the columns of $A$ have norm bounded by $O(\sqrt{m}+\sqrt{\log
n})$ with probability $1-1/\poly(n)$ by standard tail bounds. Thus, the above
theorem yields a strict generalization of~\cite{BDHT22}.

Our second result is a discrete variant of the random packing IP gap bound
of~\cite{dyer_probabilistic_1989}.   
\begin{theorem}[Discrete Packing IPs]\label{thm:ipgap_packing}
For $m \geq 1$, $k \geq 3$, $\beta \in (0,1/4)$, $n \geq \poly(m,k)\exp(\Omega(1/\beta))$, $b \in ((kn \beta,kn(1/2-\beta)) \cap \Z)^m$, if $c$ has i.i.d.
exponential entries and the entries of $A$ are independent and uniform in $\{1,\dots,k\}$,
then
\begin{align*}
\Prob\left(\mathsf{IPGAP}(A,b,c) \geq  \frac{\exp(O(1/\beta)) \poly(m)(\log n)^2}{n}\right) \leq n^{-\poly(m)}.
\end{align*}
\end{theorem}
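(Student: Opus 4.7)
The proof follows the rounding strategy of~\cite{dyer_probabilistic_1989,BDHT21a} outlined in the introduction: solve the LP, round down the fractional variables, and fix the slacks using a carefully chosen subset of zero-columns. The new ingredient is the discrete case of \Cref{thm:disc-concrete}, applied directly with shift parameter $a=1$ and width $k-1\ge 2$ (using the hypothesis $k\ge 3$), matching the distribution of the entries of $A$ on $\{1,\dots,k\}$.

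First I would solve the LP relaxation to obtain an optimal basic pair $(x^*,\us)$ and partition $[n]$ into $I_0=\{i:x^*_i=0\}$, $I_1=\{i:x^*_i=1\}$, and the at most $m$ fractional coordinates $F$. Since $b_i/(kn)\in(\beta,1/2-\beta)$ lies strictly inside the interval $(0,(k+1)/(2k))$ of attainable scaled right-hand sides, a probabilistic LP-structural analysis (generalizing~\cite{dyer_probabilistic_1989}) should yield, with probability $\ge 1-n^{-\poly(m)}$, that $|I_0|,|I_1|=\Theta(n)$ and that the dual satisfies $\|\us\|\le \exp(O(1/\beta))$. This is the source of the $\exp(O(1/\beta))$ factor: as $b$ approaches either extreme, $\us$ blows up and the LP degenerates. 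Then, combining sub-exponential tail bounds for the $c_i$ with \Cref{lem:conditionaldist}, I would locate $\Omega(n)$ indices in $I_0$ whose reduced cost $\bar c_i:=c_i-\ust a_i$ lies in $[-w,0]$ with $w=\exp(O(1/\beta))\poly(m)(\log n)/n$, and analogously $\Omega(n)$ indices in $I_1$ with $\bar c_i\in[0,w]$; restricting to $\bar n=\poly(m)\log n$ such indices produces candidate pools $C_0\subseteq I_0$ and $C_1\subseteq I_1$ on which, conditional on membership, the columns remain i.i.d.\ uniform on $\{1,\dots,k\}^m$.

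To invoke the discrepancy theorem I would fix $p=1/\poly(m)$ satisfying $p\le s/(100 m^5)$ and $p^4=\omega(m^3/\bar n)$ (both implied by $\bar n=\poly(m)\log n$ and the hypothesis $n\ge \poly(m)\exp(\Omega(1/\beta))$), and pre-select a balancing flip set $S\subseteq C_1$ of size $\lceil p\bar n\rceil$ to be switched from $1$ to $0$. Writing $F_{\mathrm{round}}$ for the rounded-down contribution of $x^*$ on $F$, the intermediate vector $x':=x^*-\mathbf{1}_{F_{\mathrm{round}}}-\mathbf{1}_S$ has target slack $t:=Ax^*-Ax'$ whose mean is $\approx p\bar n\tfrac{k+1}{2}\mathbf{1}_m$ (coming from $S$) plus an $O(mk)$ perturbation (coming from $F$), so that $\|t-p\bar n\tfrac{k+1}{2}\mathbf{1}_m\|_2=O(mk)\le skp\sqrt{m\bar n}$, comfortably meeting the hypothesis of \Cref{thm:disc-concrete}. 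The theorem applied to $A_{C_0}$ then yields $T\subseteq C_0$ with $|T|=\Theta(p\bar n)$ and $A\mathbf{1}_T=t$; the integer vector $x:=x'+\mathbf{1}_T$ satisfies $Ax=Ax^*\le b$ and is therefore feasible.

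The cost analysis is immediate from complementary slackness: writing $c_i=\ust a_i+\bar c_i$ and using $A\mathbf{1}_T=A\mathbf{1}_{F_{\mathrm{round}}}+A\mathbf{1}_S$, the $\ust A$ contributions in $c^\T(x^*-x)$ cancel, leaving $\val_\mathsf{LP}-c^\T x=\sum_{i\in F}\bar c_i x^*_i+\sum_{i\in S}\bar c_i-\sum_{i\in T}\bar c_i=\sum_{i\in S}\bar c_i+\sum_{i\in T}|\bar c_i|\le(|S|+|T|)w\le \exp(O(1/\beta))\poly(m)(\log n)^2/n$, where the $F$-term vanishes since $\bar c_i=0$ for $i\in F$ by complementary slackness. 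The main obstacle I anticipate is the first step: proving $\|\us\|\le \exp(O(1/\beta))$ and $|I_0|,|I_1|=\Theta(n)$ for a positive, discretely distributed $A$ with exponential $c$ is more delicate than the continuous Gaussian/uniform settings already handled in~\cite{dyer_probabilistic_1989,BDHT21}, and tracking the sharp $\beta$-dependence requires two-sided Chernoff-type bounds around both the ``empty'' and ``half-full'' LP extremes.
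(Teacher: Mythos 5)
Your high-level strategy (round the LP, fix slacks via the discrepancy theorem, control the gap via complementary slackness) matches the paper, but several central ingredients are replaced by constructions that do not work, and the genuine technical obstacles are misdiagnosed. The paper does \emph{not} flip a set $S\subseteq I_1$ of ones to zeros to make room; it instead solves the LP for a \emph{shrunk} right-hand side $b'=b-\gamma\mathbf 1$ (with $\gamma=\Theta(sr\mu/m^5)$) and only flips zeros. This is not cosmetic: in your scheme the target $t:=Ax^*-Ax'$ is generically non-integral on the $\le m$ fractional coordinates of $x^*$ (and hence on every constraint row $j$ with $u^*_j=0$), so it is not a legal target for the discrete case of \Cref{thm:disc-concrete}, which demands $t\in\Z^m$. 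Rounding $t$ up breaks feasibility in near-tight rows with $u^*_j=0$; rounding it down leaves a $\Theta(1)$ slack on tight rows that, paired against $\|u^*\|_1=\Theta(1/\beta)$, dwarfs the $O(\log^2 n/n)$ budget. The paper's shrinkage $\gamma$ simultaneously creates an integer's worth of slack in every non-tight row (so $t_j=\lfloor\gamma\rfloor$ is feasible and integral there) and makes $t_j=b_j-(Ax')_j$ integral and equal to the ``right'' amount on tight rows via complementary slackness with $b'$. Your flip-set-$S$ idea doesn't provide this dual role. You would also need an analogue of \Cref{lem:conditionaldist} for $N_1$ before you could treat columns indexed by $S$ as i.i.d.; the paper only proves it for $N_0$.

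Two more points are off. First, the source of $\exp(O(1/\beta))$: the paper proves $\|u^*\|_1\le O(1/\beta)$ (a polynomial bound, \Cref{lem:solution-props-packing}), and the exponential factor arises because the rejection-sampling acceptance probability for a small-reduced-cost column, with $c$ exponential, is $\Theta(\delta\exp(-\|u^*\|_1))$ (\Cref{lem:rejection-sampling-packing}). Your claimed bound $\|u^*\|\le\exp(O(1/\beta))$ is both unsupported and, if it were the best available, would force a doubly-exponential-in-$1/\beta$ loss in that acceptance probability. Second, ``conditional on membership, the columns remain i.i.d.\ uniform on $\{1,\dots,k\}^m$'' is not achievable: $c_i\ge 0$ forces $u^{*\T}a_i\ge\delta$, so columns with very small $u^{*\T}a_i$ cannot have reduced cost in $[0,\delta]$; the paper's rejection sampling produces uniformity on the truncated cube $\{1,\dots,k\}^m\cap[k/(3m),k]^m$, with the correspondingly shifted mean $\mu=(k+\lceil k/(3m)\rceil)/(2k)$ that must then be fed into the target estimate. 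Getting this mean wrong would invalidate the $\|t-\bar np\mu\mathbf 1\|$ check needed to apply \Cref{thm:disc-concrete}.
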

Compared to~\cite{dyer_probabilistic_1989} we require exponentially
distributed objective coefficients, i.e., with density $e^{-x}$, $x \geq 0$,
instead of uniform $[0,1]$. The additional smoothness of the distribution
makes the arguments much cleaner while preserving the qualitative nature of
the results. We remark that extending the above gap bound to the setting of
non-negative logconcave random vectors is also possible, though we we do not
pursue this here.  

As a consequence of these gap bounds, we derive the following generalized
complexity bounds for B\&B.
\begin{corollary}\label{cor:branch-and-bound}
With probability $1-1/\poly(n)$, the best-bound first Branch-and-Bound
algorithm applied to~\eqref{primal-ip} produces a tree of size at most
$n^{\poly(m)}$ in the Centered IP model of Theorem~\ref{thm:ipgap_centered}
and of size $n^{\exp(O(1/\beta)) \poly(m)}$ in the Discrete Packing IP model from Theorem~\ref{thm:ipgap_packing}.
\end{corollary}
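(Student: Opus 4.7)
The corollary should follow as a direct application of the parameterized tree-size bound in Theorem~\ref{thm:meta-logcon} (and its analogs for the discrete and packing settings due to \cite{dey_branch-and-bound_2021, BDHT21a}), combined with the gap bounds of Theorems~\ref{thm:ipgap_centered} and~\ref{thm:ipgap_packing}. In each case, the underlying tree-size statement has the shape
\[
\text{tree size} \;\leq\; n^{O(m)}\, e^{2\sqrt{2nG}},\qquad \text{with probability}\; \geq 1 - \Pr[\mathsf{IPGAP} \geq G] - 1/\poly(n).
\]
If $G \leq C(\log n)^2/n$ for some parameter $C$ depending on $m$ (and $\beta$), then $2\sqrt{2nG} = O(\sqrt{C}\log n)$, which exponentiates to $n^{O(\sqrt{C})}$.

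\textbf{Centered IP model.} Here I would invoke Theorem~\ref{thm:meta-logcon} directly in the logconcave case: its hypotheses (independent columns of the stacked matrix with $c$ on top of $A$, each logconcave with identity covariance) are exactly what is guaranteed by $c \sim \mathcal{N}(0,I)$ together with the isotropic logconcave columns of $A$. For the discrete variant with entries uniform on $\{0, \pm 1, \dots, \pm k\}$, the analogous tree-size guarantee follows from the more general arguments in \cite{BDHT21a}, whose probabilistic assumptions (independent, centered, well-concentrated columns) are readily verified for this distribution. Plugging in $C = \poly(m)$ from Theorem~\ref{thm:ipgap_centered} gives tree size $n^{O(m)}\cdot n^{\poly(m)} = n^{\poly(m)}$ and failure probability at most $n^{-\poly(m)} + 1/\poly(n) = 1/\poly(n)$.

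\textbf{Discrete Packing IP model.} The analogous parameterized tree-size bound for random packing IPs is exactly the setting of \cite{dey_branch-and-bound_2021} (uniform on $[0,1]$), and the extension to the discrete $\{1,\dots,k\}$ packing case should proceed along the same lines, using only that the columns are independent and sub-Gaussian with appropriately bounded mean and variance. Plugging in $C = \exp(O(1/\beta))\poly(m)$ from Theorem~\ref{thm:ipgap_packing} yields a tree of size $n^{\exp(O(1/\beta))\poly(m)}$ with failure probability at most $n^{-\poly(m)} + 1/\poly(n) = 1/\poly(n)$.

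\textbf{Main obstacle.} All the substance is in the gap theorems; the only non-routine piece of bookkeeping is to check that the tree-size meta-theorem of \cite{dey_branch-and-bound_2021, BDHT21a} applies verbatim (or with cosmetic modifications) to the specific discrete distributions considered here. This should amount to standard sub-Gaussian concentration checks on the columns, with no new ideas required beyond those already present in \cite{BDHT21a}.
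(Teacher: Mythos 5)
Your plan hinges on applying the meta-theorem (\cref{thm:meta-logcon}) directly, possibly with ``cosmetic modifications'' for the discrete cases. This is exactly where the argument breaks down, and the paper explicitly flags this point: \cref{thm:meta-logcon} requires the columns of the stacked matrix $\binom{c}{A}$ to be \emph{logconcave} with identity covariance, which fails when the entries of $A$ are uniform on a discrete set. So for both discrete variants — the discrete centered model (DSU) and the discrete packing model — you cannot invoke \cref{thm:meta-logcon} ``verbatim,'' and the modifications needed are not merely a matter of checking sub-Gaussian concentration of the columns. The reason is that the proof of \cref{thm:meta-logcon} in~\cite{BDHT21a} relies on anti-concentration of $A^\T u - c$ uniformly over \emph{all} duals $u$, which is extracted from the continuity of the full column distribution of $W = (c, A^\T)$. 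When $A$ is discrete, this uniform anti-concentration no longer comes for free, and one cannot simply rerun the argument.

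The paper's actual route around this is genuinely different from what you propose. It goes back to the underlying counting bound (\cref{thm:bnb}), which controls the tree size by the cardinality of a single knapsack
\[
|\{x \in \{0,1\}^n : \textstyle\sum_i x_i\,|(A^\T u^* - c)_i| \le \mathsf{IPGAP}(A,b,c)\}|,
\]
and then replaces the ``anti-concentration over all duals'' argument by two ingredients that \emph{are} available in the discrete setting: (i) a high-probability bound $\|u^*\| \le R$ on the norm of the optimal dual, coming from \cref{lem:solution-props} (centered) and \cref{lem:solution-props-packing} (packing), and (ii) a union bound over an $\eps$-net of the ball $\{\|u\| \le R\}$, established in \cref{lem:bnd-norm-bd}, which uses only the continuity and bounded density of the \emph{objective} $c$, not of $A$. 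Note that the boundedness of $\|u^*\|$ was \emph{not} needed in the original argument of~\cite{BDHT21a}, so this is a new structural fact the paper must establish and exploit, rather than a routine verification. You should replace the ``main obstacle'' paragraph: the obstacle is real, named explicitly in the paper, and overcome by a non-cosmetic change of strategy.

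One smaller point: even in the continuous logconcave centered case, where \cref{thm:meta-logcon} does apply, the paper chooses to prove \cref{cor:branch-and-bound} uniformly via the knapsack-counting route rather than by invoking \cref{thm:meta-logcon}. Your proposed shortcut would be valid there, so that part of the proposal is not wrong, just not what the paper does — but the discrete cases are where the proposal as written would fail.
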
 
In the discrete case, we note that the above result does not follow directly
from \Cref{thm:meta-logcon}, since it requires the constraint matrix to have
a logconcave distribution (a suitable adaptation of the proof still applies
however, see \Cref{sec:bnb}).

The above results extend our understanding of gap bounds in two significant
ways. Firstly, they show that strong gap bounds are obtainable when the
entries of the constraint matrix take only small integer values. This is well
motivated by the fact that the constraints for practical IPs are often
combinatorial and thus are expressed using small integer coefficients.
Secondly, in the case of centered IPs, we show that non-trivial correlations
between the variables in a column of the constraint (induced by logconcavity)
can be handled. Thus, we establish a limited form of universality for these
gap bounds. 

We would like to stress that neither of these extensions is a priori obvious.
To highlight the subtleties in the discrete setting, we note that even for
$m=1$, the gap bound does not hold when the right hand side $b$ is
non-integral. As a simple example, it is easy to see that for $n$ odd, the
integrality gap of the knapsack program $\max \sum_{i=1}^n x_i$ subject to
$\sum_{i=1}^n x_i \leq n/2,\ x \in \{0,1\}^n$ is $1/2$. It is not hard to check
that a constant gap is preserved with overwhelming probability even if the
profit vector $c$ is exponentially distributed and weights $a_1,\dots,a_n$
are drawn uniformly from $\{1,\dots,k\}$, for $k$ fixed. 

To circumvent the above issue, a crucial step in our proof in the discrete
setting is to show that we can round the optimal LP solution $x^*$ to an
integer solution $x'$ that is tight on the same set of constraints. That is,
$(Ax^*)_i = b_i \Rightarrow (Ax')_i = b_i, \forall i \in [m]$ (which is only
possible if $b$ is integral). Indeed, one of our main technical
contributions, described in the next section, is to give general conditions
under which such exact roundings are possible, using Fourier analytic
techniques from discrepancy theory. We remark that Dyer and
Frieze~\cite{dyer_gap_1992} believed that a discrete extension of their gap
bounds should be possible provided the range of the discrete distribution (i.e., $k$ above)
grows with $n$, which we confirm here without this assumption\footnote{We note that our gap bounds seemingly require that $n$ be somewhat larger than the range $k$. This is an artificial restriction. For (very) large $k$, one must treat the discrete distribution as if it were continuous, which requires a slight adaptation of the proofs.}.

Beyond generalizing previous work, our results also improve the probabilistic guarantees.
Compared to prior works, our gap bounds hold with high probability
$1-1/\poly(n)$ instead of $1-1/\poly(n)-2^{-\poly(m)}$ (which is constant for
fixed $m$). Achieving a high probability bound via prior estimates increases
the gap by a $O(\log n)$ factor, which makes the corresponding B\&B
complexity quasi-polynomial instead of polynomial.

Is it natural to ask whether there are natural limitations to extending these
gap bounds to much wider classes of IPs. For logconcave random IPs, one may observe that any worst-case instance can be encoded as the mean
$\E[(A,b,c)]$ of the random instance. In this way, one can view the
distribution $(A,b,c)$ as a smoothed version of worst-case IP instance
$\E[(A,b,c)]$. By appropriately scaling up the means, or equivalently scaling
down the variance of the entries of $(A,b,c)$, the the instance $(A,b,c)$
converges to the worst-case instance $\E[(A,b,c)]$. It is not hard to check
that for many hard combinatorial problems such as SET COVER or MAX 3-SAT,
adding small random perturbations (of appropriate sign) to the instance data
does not change the optimal solution, and thus we cannot expect strong
integrality gap results in these settings.   

We note that smoothed analysis of integer programming has indeed been studied
by various works \cite{beier_core_2004,Rglin2007}. In particular, R\"oglin
and V\"ocking~\cite{Rglin2007} showed that under mild conditions, the class
of IPs that are easy to solve on average over suitable random
perturbations of the instance data (i.e., that can be solved in polynomial
time with high probability over the perturbations) correspond exactly to the
class of IPs solvable in pseudopolynomial time.

\paragraph{\bf Gap Bounds in Related Settings.}

Strong gap bounds have also been proven for random instances of combinatorial
optimization problems, though this has not translated into good upper bounds
on the size of B\&B trees. Dyer and Frieze~\cite{dyer_gap_1992}, proved an
$O_m(\log^2n/n)$ gap bound for random instances of a generalized
assignment problem on a bipartite graph with $m$ vertices on the small side
and $n$ vertices on the big side. They further gave an $n^{O_m(1)}$ algorithm
to solve this problem with high probability. Frieze and
Sorkin~\cite{FriezeSorking07} showed that the cycle cover relaxation for the
asymmetric TSP has an expected $O(\log^2 n / n)$ additive integrality gap,
where the edge weights are chosen uniformly from $[0,1]$ for a complete
digraph on $n$ vertices and gave an $2^{\tilde{O}(\sqrt{n})}$ algorithm which
solves these instances with high probability. Recently,
Frieze~\cite{Frieze20} showed that any B\&B tree using the cycle cover
relaxation to prune nodes\footnote{The cycle cover relaxation is, in fact,
integral, but its solutions are not feasible (i.e., they may be a union of
many disjoint directed cycles). B\&B must therefore branch on variables that
are integral in the current relaxation.} will have expected size
$\Omega(2^{n^{\alpha}})$, for some $0 < \alpha < 1/20$. It is an interesting
open question to understand if B\&B can recover the same running times as the
specialized algorithms above for these problems.

\subsection{Techniques}

We now explain the high level strategy behind the obtaining integrality gap
bounds of~\cite{dyer_probabilistic_1989,BDHT22}. We will then extract a
natural discrepancy theoretic problem which will be the key to our
generalizations. 

\paragraph{\bf Proving Gap Bounds.} Given integer program indexed by
$(A,b,c)$, we recall that the goal is to bound the gap between the value of the integer
program and its linear programming relaxation. The strategy to bound the
integrality gap is to suitably round an optimal basic primal solution $x^*
\in [0,1]^n$ (for possibly a slightly perturbed right-hand side $b$) to a
nearly optimal solution $x'' \in \{0,1\}^n$ to the IP. The rounding procedure
will make use of the dual optimal solution $\us \in \R^m_+$ (see
\Cref{sec:lp-prelim} for the dual program). 

To round $x^*$, we first round down its fractional components to $0$ to get
$x'$ (there are at most $m$ such components). From here, one selects a small subset of variables $T \subseteq \{i:
x^*_i = 0\}$, $|T| = O_m(\log n)$, with tiny reduced costs (which are
``cheap'' to flip to $1$), namely $|c_i - \ust a_i| = O_m(\log n/n)$, for $i \in T$.
The subset $T$ is carefully chosen such that $x'' := x'+1_T \in \{0,1\}^n$,
obtained by flipping the variables in $T$ from $0$ to $1$, is the desired
feasible and nearly optimal IP solution. For feasibility, we need that
$A(x'+1_T) \leq b$. For near-optimality, we additionally need $x'+1_T$ to be nearly (or even exactly) tight on the
constraints that $x^*$ is tight on. That is, we need that $(Ax^*)_i = b_i
\Rightarrow (A(x'+1_T))_i \approx b_i,\ \forall i \in [m]$. Simplifying
slightly, a sufficient condition is $A 1_T \approx A(x^*-x')$,
thus we may think of $T$ as a ``slack repairing'' set.   

A crucial property, in both the packing and Gaussian setting, is that $x^*$ has
at least $\Omega(n)$ zero entries and that the corresponding columns of $A$ are
independent subject to having negative reduced cost (see
\Cref{lem:conditionaldist} and \Cref{lem:solution-props}). Furthermore, there
are at least $\Omega_m(\log n)$ columns with suitably small reduced costs, and so there is large set of columns from which to select $T$.    

To generalize the above strategy to other distributions, the crucial
difficulty is understanding under what conditions the ``slack repairing''
set $T$ above exists.   

\paragraph*{\bf The Discrepancy Problem.} Stated abstractly, the existence of
$T$ can be phrased as a natural discrepancy theoretic problem. Let $t \in
\R^m$ be a target vector (e.g., the difference of slack vectors) and let $A
\in \R^{m \times \bar{n}}$ be a ``nice'' random matrix with independent
columns. When can we ensure with high probability that $t$ is equal to or
very close to a $\{0,1\}$ combination of the columns of $A$? 

A general answer to this question was given in~\cite[Lemma 1]{BDHT22},
improving upon~\cite[Lemma 3.4]{dyer_probabilistic_1989}. However, it
enforced very strict conditions on the entries of $A$. Specifically, the
entries of $A$ needed to be independent, mean zero with unit variance,
absolutely continuous random variables of bounded density which ``converge
quickly enough'' to a Gaussian when averaged. Furthermore, for the targets
$t$ in the ``range'' of $A$, the probability of successfully hitting $t$ was
only $\Theta(1)$. 

As our main technical contribution, which yields the key ingredient for
extending the IP gap bounds, we give a much more general and powerful
discrepancy theorem. We state it below, restricted to special cases relevant
for our applications (see \Cref{thm:main} for the general result). 

\begin{theorem}[Linear Discrepancy Theorem for Random Matrices]
\label{thm:disc-concrete}
\phantom{ }
Let $A = (a_1,\dots,a_{\bar{n}}) \in \R^{m \times \bar{n}}$, $\bar{n}
\geq \poly(m)$, be a random matrix with independent columns with same mean
$\mu \in \R^m$. Let $p \in (0,1)$ satisfy $\frac{\poly(m)\log \bar{n}}{\bar{n}} \leq p \leq \frac{1}{\poly(m)}$. Then, the following holds:
\begin{itemize}
\item Discrete case: suppose that the entries $A_{ij}$ are i.i.d. uniform on an integer interval $\{l,l+1,\dots,l+k\}$ for $l,k \in \Z$, $\bar{n} \geq k \geq \max\{2,|l|\}$. Then, with probability at least
$1-e^{-\Omega(p \bar{n})}$, for every $t \in \Z^m$ satisfying
$$\|t-p\bar{n}\mu\|_2 \leq O(\frac{k\sqrt{p\bar{n}}}{m \log m})$$ there
exists $x \in \{0,1\}^{\bar{n}}$ with $\|x\|_1 = \Theta(p\bar{n})$ such that
$Ax = t$.
\vspace{1em}  
  
\item Continuous case: suppose that $a_1,\dots,a_{\bar{n}}$ are
logconcave with identity covariance and that $\|\mu\| \leq \poly(m)$. Then,
with probability at least $1-e^{-\Omega(p\bar{n})}$, for every $t \in \R^m$
satisfying, $$\|t-p\bar{n}\mu\|_2 \leq O(\frac{\sqrt{p\bar{n}}}{m \log m}),$$
there exists $x \in \{0,1\}^{\bar{n}}$, with $\|x\|_1 = \Theta(p\bar{n})$
such that $\|t-Ax\|_2 \leq e^{-\Omega(p\bar{n}/m)}$.    
\end{itemize}
\end{theorem}

When applying the above to find the slack repairing set $T$, $\bar{n}$ will
roughly be $O(\poly(m) \log n)$ and $p = 1/\poly(m)$. The corresponding
success probability will now be $1-n^{-\poly(m)}$ as opposed to $\Theta(1)$,
which is what allows us to much better tail bounds for the integrality gap.

\paragraph*{\bf Gap bounds via Discrepancy.} 

Given the discrepancy theorem, the proof of the gap bounds mirrors the proofs
in~\cite{dyer_probabilistic_1989,BDHT22} as described in the last section,
though with non-trivial technical adaptations as well as some
simplifications. 

As in prior work, the relevant properties of the optimal primal $x^*$ and
dual solution $\us$ must be established in these generalized settings (see
\Cref{lem:solution-props} and \Cref{lem:solution-props-packing}). In
particular, one must show that the norm of $\us$ is suitably bounded, and
that there are sufficiently many columns of $A$ with small reduced costs
indexed by the zeros of $x^*$. These properties can be derived using standard
tools for concentration and anti-concentration of logconcave and uniform
discrete random variables.

Providing ``nice enough'' columns for our discrepancy theorem can however be
challenging. For centered IP where $A$ has independent logconcave
columns, conditioning on the size of the reduced costs can significantly
perturb the mean of each column (possibly in different directions when the
columns are not i.i.d.). We overcome this problem using sophisticated forms
of rejection sampling, which allows to ``virtually recenter'' each logconcave
column (see \Cref{lem:sample-mean-zero}). Interestingly, the rejection
sampling procedure does not even preserve logconcavity, however the
properties required for our more general discrepancy theorem persist (see
\Cref{thm:main}).    
  
In terms of simplifications, compared to earlier proofs we no longer
require repeated trials on disjoint subsets of columns of $A$ to find a
suitable slack repairing set $T$. In particular, due to our new discrepancy
theorem, using all the small reduced cost columns together both
exponentially decreases the probability of failure and increases the size of
the targets one can hit. Furthermore, since the discrepancy theorem directly
applies to columns with non-zero means, one can avoid ad-hoc reductions to
the mean-zero case as in the proof of~\cite{dyer_probabilistic_1989} for the
packing case. 

\paragraph{\bf Relations to Discrepancy Theory.}

We first explain the connection to linear discrepancy. As defined by
Lov{\'a}sz, Spencer, Vesztergombi~\cite{lovasz1986discrepancy}, the linear
discrepancy of a matrix $A \in \R^{m \times n}$ is ${\rm lindisc}(A) :=
\max_{\lambda \in [0,1]^n} \min_{x \in \{0,1\}^n} \|A(x-\lambda)\|_\infty$.
That is, it is the maximum ``rounding error'' one must incur to round a
$[0,1]$ combination of the columns to a $\{0,1\}$ combination (the specific
choice of $\ell_\infty$ vs $\ell_2$ norm is not important in our context).
The discrepancy ${\rm disc}(A)$ of $A$ is linear discrepancy restricted to
$\lambda = {\bf 1}_n/2$ (the all $1/2$ vector). It is more common to
expressed discrepancy by $\min_{x \in \{-1,1\}^n} \|Ax\|_\infty = 2 {\rm
disc}(A)$, in which case $x \in \{-1,1\}^n$ is interpreted as a 2-coloring of
the columns of $A$. While linear discrepancy is always larger than
discrepancy,~\cite{lovasz1986discrepancy} showed the maximum discrepancy of
any subset of the columns of $A$, known as hereditary discrepancy, upper
bounds ${\rm lindisc}(A)$ up to a factor of $2$.
 
Bounds on the discrepancy of various matrix classes, often induced by the incidence matrix of set
system, have found many applications in computational geometry and
complexity (see~\cite{matousek1999geometric,chazelle2000discrepancy}). Over
the last decade or so, efficient algorithms for producing low-discrepancy
colorings have been
developed~\cite{bansal2010constructive,lovett2015constructive,rothvoss2017constructive,eldan2014efficient,bansal2018gram}
and have found many applications in the context of approximation
algorithms~\cite{lau2011iterative,HR17,bansal2019generalization,bansal2022flowtime}.      

With the above perspective, \Cref{thm:disc-concrete} can be interpreted as
bounding the linear discrepancy of the random matrix $A$ for combinations
$\lambda \in [0,1]$ which are very close to $p{\bf1}_n$, where $p \in (0,1)$
is as above. As the columns of $A$ are independent with mean $\mu$, one can
expect that $A(p{\bf1}_n) \approx pn\mu$. Perhaps slightly less clear is that
every $t\in \R^m$, which is close to $pn\mu$, will in fact be exactly
expressible as $t = A \lambda$, where $\lambda \approx p {\bf1}_n$ with high
probability (this requires an analysis of the singular values of $A$).
\Cref{thm:disc-concrete} now shows that $\lambda$ can be replaced by $x \in
\{0,1\}^n$ with $\|x\|_1 = \Theta(pn)$ incurring either no error in the
discrete case (assuming $t \in \Z^n$) or exponentially small error in the
continuous case, thereby bounding the linear discrepancy of the combination
$\lambda$. At least in the continuous case, we note that one can in fact
adapt the proof of \Cref{thm:disc-concrete} to directly bound the linear
discrepancy of any ``reasonable'' combination $\lambda$ (i.e., without the
detour through the mean $\mu$). This would be somewhat less useful for the
integrality gap application we consider here, since it would be give
significantly less precise guarantees on the targets we can expect to hit.
Interestingly, while the application pursued here is algorithmic, i.e,
bounding the complexity of B\&B, we are not aware of any efficient algorithm
to compute the rounding $x$ guaranteed by \Cref{thm:disc-concrete}.    

Crucial to the linear discrepancy bounds we achieve in
\Cref{thm:disc-concrete}, i.e., either exponential small or zero, is that the
matrix $A$ has many more columns that the dimension $m$. In particular,
reducing to bounds on hereditary discrepancy become useless in this setting.
The study of discrepancy in the ``over complete'' setting has been become
very active somewhat more recently, with works focusing on the discrepancy of
large random matrices and set
systems~\cite{costello2009balancing,kuperberg2017probabilistic,ezra2019beck,hoberg2019fourier,FS20,potukuchi2018discrepancy,bansal2020discrepancy}.
Many of these works were motivated by the Beck-Fiala conjecture, which posits
that discrepancy of any $\{0,1\}$ matrix $A$ in which every column has at
most $t$ ones is bounded by $O(\sqrt{t})$ (here $A$ can be interpreted as the
incidence matrix of a set system). Variants of this conjecture for random set
systems and matrices where established
in~\cite{costello2009balancing,ezra2019beck,hoberg2019fourier,FS20,potukuchi2018discrepancy,macrury2021phase,bansal2020discrepancy},
where in the case $n \gg m$, it was shown that discrepancy quickly drops to
$1$~\cite{hoberg2019fourier,FS20,potukuchi2018discrepancy,macrury2021phase}
or even exponentially close to zero~\cite{costello2009balancing,FS20}
depending on whether the columns of the matrix $A$ are discretely or
continuously distributed.

\paragraph*{\bf Discrepancy via Fourier Analysis.}

To prove our linear discrepancy theorem, we rely on a Fourier analytic
approach, which is completely different from the second-moment counting based
proofs in~\cite{dyer_probabilistic_1989,BDHT22}. The high level approach was
pioneered by Kuperberg, Peled and Lovett~\cite{KLP17}, who applied Fourier
analytic techniques to show the existence of rigid combinatorial objects,
such as orthogonal arrays, Steiner systems and regular hypergraphs. At a
technical level, for a matrix $A \in \{0,1\}^{m \times n}$, they were
interested in understanding the minimum $p \in (0,1]$ such that $pA{\bf 1}_n
= Ax$, where $x \in \{0,1\}^n$ $\|x\|_1 = pn$. They examined this question
for highly symmetric deterministic matrices coming from the above
applications. The Fourier analytic approach was later applied by Hoberg and
Rothvoss~\cite{hoberg2019fourier} and independently by Frank and
Saks~\cite{FS20} to show very strong upper bounds on the discrepancy $\min_{x
\in \{-1,1\}^n} \|Ax\|_\infty$ of a random matrix $A$ when $n \gg m$, where
the columns of $A$ were drawn iid from various distributions.

From a comparative perspective, our \Cref{thm:disc-concrete} sits in between
the work of~\cite{KLP17} and \cite{hoberg2019fourier,FS20}. We work with
random matrices $A$ as in~\cite{hoberg2019fourier,FS20}, though the question
we attack is more in the spirit of~\cite{KLP17}. We note that unlike
\cite{KLP17}, it is not sufficient for us to show that the existence a
rounding $x \in \{0,1\}^n$ such that $A(x-p{\bf 1}_n)$ is small or zero. For
our applications, we require this to be true with $Ap{\bf 1}_n$ replaced by
$pn\mu = \E[Ap{\bf 1}_n]$. We further require the existence of $x$ to hold
for any target $t$ close enough to $pn\mu$. This last requirement however
generally comes for ``free'' with Fourier analytic techniques (the moment you
can hit $pn\mu$ you can hit everything close to it as well). A more
significant difficultly is that the concentration of $Ap{\bf 1}_n$ around its
mean $pn\mu$ is rather weak. That is, the probability that $Ap{\bf 1}_n$ is
close to its mean scales relative to the ambient dimension $m$ (which is
constant) instead of $n$. This makes achieving the $1-e^{-\Omega(pn)}$
success probability more challenging.   

We now explain the high-level approach. Let $A \in \R^{m \times n}$ be our
random matrix with columns having mean $\mu \in \R^m$ and covariance matrix
$\sigma^2 I_m$, and let $p \in (0,1)$ be our parameter. The strategy is to directly analyze the
probability mass function of the random variable $Y = AX$, where
$X_1,\dots,X_n$ are i.i.d.~Bernoulli's with probability $p$. \cite{KLP17}
also use this distribution, whereas~\cite{hoberg2019fourier,FS20} choose
$X_1,\dots,X_n$ to be uniform $\{-1,1\}$ random variables. Restricting
attention to the discrete case, where $Y \in \Z^n$, we show that $\Pr[Y=t]
\gg 0$ for $t \in \Z^n,$ when $\|np\mu-t\| = O_m(\sigma \sqrt{pn})$, where
$O_m(\sigma^2 p n)$ roughly measures the ``available variance'' of $AX$ in
all directions. Obtaining a bound for $\Pr[Y=t]$ is done by applying the
Fourier inversion formula, showing that the Fourier coefficients are close to
those of a Gaussian and integrating (see \Cref{sec:discrepancy} for an
overview). 

For $\theta \in [-1/2,1/2]^m$, the corresponding of Fourier coefficient $AX$
is expressed by $\E_X[e^{2\pi i \langle \theta, AX \rangle}]$. Similar
to~\cite{hoberg2019fourier,FS20}, we control the magnitude of these
coefficients using the anti-concentration properties of the columns of $A$.
Specifically, for our choice of column distributions, we need to show the the
probability that $\langle \theta, A_i \rangle$, $i \in [n]$, is ``close'' to
an integer decays predictably as a function of $\sigma \|\theta\|$. We note
that the exact expression for the Fourier coefficients indeed differs
depending on whether the entries of $X$ are Rademacher or Bernoulli
distributed, where the former is more prone to parity
issues (e.g., if $A \in \{0,1\}^{m \times n}$ and $x \in \{-1,1\}^n$, the parity of $Ax$ is fixed). Such parity issues do not arise in our setting. As mentioned above,
we face a different difficulty, which is being able to pinpoint the exact
targets whose probabilities we can accurate estimate due to the poor
concentration of $\E_X[AX] = A(p{\bf 1}_n)$ around $\E_{X,A}[AX] = np\mu$.     

To deal with this issue, we first carefully subsample a set $S \subseteq [n]$
of columns from $A$, whose sum is close to the mean, and then generate $Y$
from these subsampled columns. To construct $S$, we iterate through the
columns one by one, adding $A_i$ to $S$ if $\langle \sum_{j \in S} (A_j-\mu),
A_i-\mu \rangle \leq 0$ and $\|A_i-\mu\| \leq 2 \E[\|A_i-\mu\|_2^2]^{1/2}$.
This subsampling deterministically ensures that $\|\sum_{i \in S}
(A_i-\mu)\|_2 \leq 2 \sqrt{\sum_{i \in S} \E[\|A_i-\mu\|_2^2]}$, suitably
biasing the sum towards the mean. For the distributions we work with, it is
easy to show that $|S| = \Omega(n)$ with probability $1-e^{-\Omega(n)}$, so
we always have a constant fraction of the columns left over. Note that this
subsampling crucially uses our flexibility to drop columns of $A$, a
distinguishing feature of $\{0,1\}$ combinations versus $\{-1,1\}$
combinations. This subsampling process, however, causes non-trivial
dependencies among the columns of $A$. That is, the submatrix we use to
generate $Y$ no longer has independent columns. Fortunately, we show that
even with the conditioning induced by subsampling, the columns still retain
enough of their anti-concentration properties to allow the Fourier analytic
estimates to go through.

\paragraph*{\bf Related Work.} 
Another success story in analysis of LP based B\&B comes from  fixed
parameter tractability (FPT). In this context, it was shown that important
combinatorial problems such as vertex cover~\cite{lokshtanov2014faster} and
multiway cut~\cite{lokshtanov2014faster} are FPT when parameterized by the
integrality gap of the natural LP relaxation. Since then, multiple
general frameworks for capturing such problems have been
developed~\cite{iwata2016half,wahlstrom2017lp}. These results crucially rely
on the use of half-integral and persistent LP relaxations, which are known
only for very structured problems. The results presented here thus mainly
provide complementary evidence for the effectiveness of B\&B in the
unstructured setting. 
        
While variable based B\&B is most commonly used in practice, there is a
theoretical line of work examining the effectiveness of branching schemes for
solving random integer programs based on general integer disjunctions. These
schemes rely on sophisticated lattice basis reduction
methods~\cite{lenstra1982factoring} and follow the template of Lenstra's
celebrated polynomial time algorithm for integer programming in fixed
dimension~\cite{lenstra1983}. In particular, Furst and Kannan~\cite{FK89}
showed that certain random subset-sum instances can be solved in polynomial
time via basis reduction, and Pataki, Tural and Wong~\cite{pataki_basis_2010}
extended these results to certain IPs with multiple constraints. Apart from
the different type of branching, compared to the present work, the IPs
analyzed in these models are either infeasible or have a unique feasible
solution with high probability. 

\paragraph*{\bf Organization}
The preliminaries and necessary background to our results can be found in \cref{sec:prelim}. 
\cref{sec:discrepancy} contains the proof of the discrepancy result, \cref{thm:disc-concrete}, and the bounds on the integrality gap, given in \cref{thm:ipgap_centered,thm:ipgap_packing}, are proven in \cref{sec:gap-bounds}. \Cref{sec:bnb} is devoted to the proof of Corollary \ref{cor:branch-and-bound}. Finally, in \cref{sec:ac} we prove several anti-concentration results required by our method.

\newcommand{\rad}{\beta}
\section{Discrepancy}
\label{sec:discrepancy}

In this section we describe our main discrepancy results. Let $A \in \mathbb R^{n \times m}$ be a random matrix with independent columns. Throughout the section we assume that the columns of $A$ have the same mean, denoted as $\mu$. We also make the assumption that all columns of $A$ have the same period, as defined in \eqref{eq:dualfunddomain}, and denote it by $\mathrm{period}(A)$. We will deal with two different cases: either $\mathrm{period}(A) = \{0\}$, which means that the columns of $A$ are absolutely continuous, or $\mathrm{period}(A)=\Z^m$, in which case the columns of $A$ are supported in the lattice $\Z^m$.

\paragraph{Notation and assumptions:}
Before stating the result, let us introduce some definitions. We first describe the distributions captured by our result; distributions with a non-negligible mass on every half-space passing through their mean.

\begin{definition}[approximately symmetric distributions] \label{def:addmis}
A probability distribution $\mathcal{D}$ on $\R^m$, with mean $\mu$, is called \emph{approximately symmetric} if, for any $\nu \in \R^m$,
$$\Pr_{X \sim \mathcal{D}}\left(\langle X, \nu\rangle \geq \langle\mu,\nu\rangle \right) \geq \frac{1}{4e^2}.$$
\end{definition}	
\begin{remark} \label{rml:admis}
	The constant $\frac{1}{4e^2}$ in the definition is somewhat arbitrary and could be relaxed to any smaller constant. It is immediately clear that any distribution which is symmetric around its mean is also approximately symmetric. Moreover, Gr\"unbaum's inequality in Lemma \ref{lem:grunbaum} shows that logconcave measures are approximately symmetric.
\end{remark}
Suppose that the columns of $A$ are approximately symmetric. The main idea will be to choose $S\subseteq [n]$ randomly with $\Pr[i\in S]=p$ for $p \leq \frac{1}{\mathrm{poly}(m)}$, independently for all $i\in [n]$. We then show that, with high probability over $A$ and positive probability over $S$, $A\mathbf{1}_S$ is close to a target vector $t$. Thus, let us define the random vector $D:=A\mathbf{1}_S$. 

To understand the distribution of $D$ we will consider its Fourier transform, denoted $\Dh(\theta):=\Exp[\exp(2\pi i \langle D,\theta\rangle)].$ Note, that under the assumptions above, we have $\mathrm{period}(A) = \mathrm{period}(D)$, and so we shall use $V$ to denote the fundamental domain of $D$, as in \eqref{eq:funddomain}.

The next definition quantifies an appropriate notion of anti-concentration properties for the columns of $A$.

\begin{definition}[anti-concentration] \label{def:AC}
	Let $\sigma \geq 0$ and $\kappa \in (0,1)$. We say the measure $\mathcal{D}$ is $(\sigma,\kappa)$-anti-concentrated if $\mathrm{Cov}(\mathcal{D}) \preceq \sigma^2\mathrm{I}_m$, and for any $\nu \in \mathbb{R}^m$ and any $\theta \in V$,
	\begin{align*}
		\Pr_{X\sim \mathcal{D}}\left[d(\theta^\T X, \mathbb{Z}) \geq \kappa\min\left(1,\|\theta\|_\infty\sigma\right) \mid\langle\nu, X\rangle \leq \langle\nu,\mu \rangle \right]\geq \kappa \tag{anti-concentration}, \label{eq:anticoncentration} 
	\end{align*}
	where $d(\theta^\T X, \mathbb{Z}):= \inf\limits_{z \in \mathbb{Z}} |\theta^\T X-z|.$
	When $\sigma$ is clear from the context, we will sometimes omit the dependence on $\sigma$ from the definition.
\end{definition}
Without further details, the definition might seem opaque. Below we explain the rationale for considering this notion of anti-concentration and demonstrate some examples of distributions that satisfy \Cref{def:AC}. For now, it shall suffice to say that the \eqref{eq:anticoncentration} property appears naturally when trying to establish bounds on Fourier transforms.

In the following assumption, which we shall enforce throughout the section, we detail the possible dependencies between the different parameters. These dependencies, in turn, dictate the possible regimes in which our results hold.
\begin{assumption} \label{ass:technical}
	We say that the random matrix $A \in \R^{n\times m}$ satisfies our assumptions if its columns are independent with a common period and a common mean $\mu\in \R^m$, are approximately symmetric, and $(\sigma,\kappa)$-anti-concentrated, with constants $\sigma, \kappa > 0$. Further, let $p \in [0,1]$ and assume that the following technical condition is met:
	\begin{equation}
	\label{eq:p-bound}
	\frac{\left(10m\left(1+ \frac{\|\mu\|}{\sigma}\right)\right)^{20}}{\kappa^{16} n} \leq p \leq \frac{\kappa^8}{10^{15}m^6} .
	\end{equation}
\end{assumption}
\paragraph{Main discrepancy results:}
With the above notation, the main result of this section is:

\begin{theorem}
Let $p \in [0,1]$ and suppose that $A$ satisfies our assumptions, as in Assumption \ref{ass:technical}.
Then, when $\mathrm{period}(A) = \{0\}$, with probability $1-e^{-\Omega(\kappa pn)}$ we have: 

For all $t \in \R^m$ such that
$$\|t-pn\mu \|\leq \frac{\sigma \kappa^{\frac{3}{2}}}{10^4\ln\left(\frac{10^3m}{\kappa^3}\left(1 + \frac{\|\mu\|^2}{\sigma^2}\right)^{\frac{1}{m}}\right)}\frac{\sqrt{pn}}{m},$$ there is a set $S$ of size $|S|\in [\frac{1}{16}pn,\frac{3}{16}pn]$ such that $$\|A\mathbf{1}_S-t\| \leq \exp\left(-\frac{\kappa^3pn}{80m}\right)\sigma n^3.$$
\label{thm:main}
\end{theorem}

Theorem \ref{thm:main}, as stated, only deals with distributions which are absolutely continuous with respect to the Lebesgue measure. However, the argument also applies to measures with singularities. In particular, the result
 also holds for distributions supported on the lattice $\Z^m$, the case which is most relevant to our work. Moreover, in the lattice case, if one finds a subset $S \subset [n]$, such that $A{\bf 1}_S$ is very close to some target vector $t\in \R^m$, then, since $A{\bf 1}_S$ belongs to the lattice as well, one can actually deduce $A{\bf 1}_S = t$. We prove this statement as a part of the proof of Theorem \ref{thm:main}.
\begin{theorem}\label{cor:maindisc}
	Suppose that  $\mathrm{period}(A) = \mathbb{Z}^m$. Then, under the same conditions of Theorem \ref{thm:main} together with $\exp\left(\frac{\kappa^3pn}{80m}\right) \geq 2 \sigma^2 n^2$, with probability $1-e^{-\Omega(\kappa pn)}$, there is a set $S$ of size $|S|\in [\frac{1}{16}pn,\frac{3}{16}pn]$ such that $A\mathbf{1}_S = t$, provided that $t \in \Z^m$.
\end{theorem}
Remark that since, by \eqref{eq:p-bound}, $\kappa^3pn \geq \mathrm{poly}(m)$, the condition $\exp\left(\frac{\kappa^3pn}{80m}\right) \geq 2 \sigma^2 n^2$ almost does not restrict generality.
\paragraph{Road map for the proof:}
Recall Fourier's inversion formula, Theorem \ref{thm:fourier}. In light of the formula, it will be enough to show that the integral in Theorem \ref{thm:fourier} is positive for appropriate $t$, for most choices of $A$. In this case we will get that $\Pr\left[D = t\right] > 0$, which implies the existence of an appropriate subset of columns. At a very high level, we will show that most of the mass lies next to the origin and that the integrand has an exponential decay far from the origin. The following sequence of steps will achieve this:
\begin{enumerate}
	\item Our first step will be to subsample the columns of $A$. This will result in appropriate concentration bounds that improve as $n$ increases, irregardless of the value of $m$ (Lemma \ref{lem:subsample_props}).
	\item We then show that with high probability $\arg (\Dh(\theta))\in [-\frac18\pi ,\frac18\pi ]$ for all $\theta$ with $\|\theta\|=O(\frac{\poly(m)}{\sigma \sqrt{n}})$. This will allow to establish that the real part of $\Dh(\theta)\exp(-2\pi i \langle\theta, t\rangle)$ is large when $\|t-pn\mu\|\|\theta\|$ is small (Lemma \ref{lem:argbound}).
	\item Next, we show that $\int\limits_{\|\theta\|\leq r}|\Dh(\theta)|d\theta$ is large for some $r = O(\frac{\poly(m)}{\sigma\sqrt{ n}})$. In combination with \cref{lem:argbound}, we shall conclude that $\int\limits_{\|\theta\|\leq r}\Dh(\theta)\exp(-2\pi i \langle\theta, t\rangle)d\theta$ is large (Lemma \ref{lem:integral_lower_bound}).
	\item Finally, we will show that for $\|\theta\|\geq r$, $|\Dh(\theta)|$ is rapidly decreasing. So the integral over these $\theta$ can only have a small negative contribution (Lemma \ref{lem:normbound}).
\end{enumerate}

The first bottleneck is Step 1, which will change the distribution and effective size of the random matrix $A$. Below we show that approximately symmetric distributions maintain many desirable properties after our subsampling procedure.
The second bottleneck is Step 4. To establish a rapid enough decay of the Fourier spectrum, we require that the columns of $A$ satisfy the \eqref{eq:anticoncentration} property from \Cref{def:AC}.

To gain a bit of intuition about \Cref{def:AC}, recall that we are working in the Fourier domain. If $X$ is a column of $A$, it is natural to require that $\langle \theta, X\rangle$ be bounded away from integer points. Otherwise, $\langle\theta, D\rangle$ could be close to an integer point with high probability, making $|\Dh(\theta)|$ large. An extra component in the definition says that the anti-concentration continues to hold after conditioning on an arbitrary half-space, passing through the mean. As will become apparent, this is a consequence of the subsampling scheme from Step 1.

Let us just note that the \eqref{eq:anticoncentration} property is not vacuous. In fact Theorem \ref{thm:disc-concrete} is a direct consequence of the following lemma (see the proof in \cref{sec:ac}) and Theorems \ref{thm:main} and \ref{cor:maindisc} (note that by Remark \ref{rml:admis} both cases are approximately symmetric).
\begin{lemma}
	\label{lem:anti-concentration}
	Suppose that for $X = (X_1,\dots,X_m) \sim \mathcal{D}$, one of the following holds,
	\begin{enumerate}
		\item $X$ is logconcave and isotropic.
		\item $X_i$ are \emph{i.i.d.} uniformly on an integer interval $\{a,a+1,\dots, a+k\}$, with $k > 1$.
\end{enumerate}
	Then, $\mathcal{D}$ satisfies \eqref{eq:anticoncentration} with constant $\kappa >\frac{1}{50}$.
\end{lemma}
One may wonder about the necessity of the condition $k > 1$ in Case 2 of Lemma \ref{lem:anti-concentration}. A moment of reflection reveals that, if $X$ is uniform on $\{0,1\}$, then $X$ is not anti-concentrated, for any $\kappa > 0$, and thus our framework does not directly apply to this case. However, by taking account of the possible bad cases, our analysis can be refined to also handle such distributions. We do not pursue this direction here.
\paragraph*{\bf Step 1 - subsampling:}
We will generate a sub-matrix of $A$ by selecting a subset of the columns. This will ensure that the norm of the columns is bounded, as well as that the norm of their sum is small. Suppose that the columns of $A$ satisfy \eqref{eq:anticoncentration} with $\kappa,\sigma > 0$, for $i=1,\ldots, n$ we define random variables $Y_i\in \{0,1\}$:
\begin{align*}
	\Pr[Y_{k+1}=1|A_1,Y_1,\dots A_k,Y_k]=\begin{cases}
		1&\text{if }\langle\sum_{j=1}^kY_{j}\left(A_{j}-\mu\right), A_{k+1}-\mu\rangle < 0 \text{ and } \|A_{k+1} -\mu\| \leq 10 \sigma\sqrt{\frac{m}{\kappa}}\\
		\frac12&\text{if }\langle \sum_{j=1}^{k}Y_{j}\left(A_{j}-\mu\right), A_{k+1}-\mu\rangle = 0 \text{ and } \|A_{k+1} -\mu\| \leq 10 \sigma\sqrt{\frac{m}{\kappa}}\\
		0&\text{else} 
	\end{cases}
\end{align*}
We then select all columns of $A$ for which $Y_i = 1$. For now, let $A'_i$ have the law of column $A_i$, conditional on being selected, and denote the selected set $S_A =\{i \in [n]| Y_i = 1\}$.
\begin{lemma} \label{lem:subsample_props}
	Suppose that the columns of $A$ satisfy \eqref{eq:anticoncentration} with $\kappa,\sigma > 0$ and that it is an approximately symmetric distribution, in the sense of Definition \ref{def:addmis}. Then, if $n \gg \frac{m^2}{\kappa}$:
	\begin{enumerate}
		\item 
		\begin{align}
		\|A'_i - \mu\| \leq 10\sigma\sqrt{\frac{m}{\kappa}}.	\tag{norm concentration}\label{eq:norm bound}
		\end{align}
		\item With probability $1-e^{-\Omega(n)}$, $|S_A| \geq \frac{n}{200}$,
		\begin{align*} 
		\sum_{i \in S_A}(A_i-\mu)(A_i-\mu)^\T  &\preccurlyeq 2n  \sigma^2 \mathrm{I}_m\text{, and}\tag{matrix concentration} \label{eq:matrix-concentration}
		\\
		\left\|\sum_{i \in S_A}A_i-\mu\right\|^2  &\leq 2nm\sigma^2. \label{eq:concentration} \tag{concentration}
		\end{align*}
		\item $\Pr\left(d(\theta^\T A'_i, \mathbb{Z}) \geq \kappa\min\left(1,\|\theta\|_\infty\sigma\right)|A_1Y_1,...,A_{i-1}Y_{i-1}\right)\geq \frac{\kappa}{2}$, for every $\theta \in V$, and $i \in [n']$. 
	\end{enumerate}
\end{lemma}
\begin{proof}[\ifflag Proof \else Proof of Lemma \ref{lem:subsample_props}\fi]
	The first claim is immediate since we have conditioned the columns on the event $\{\|A_i - \mu\|\leq 10\sigma\sqrt{\frac{m}{\kappa}}\}$.
	For \eqref{eq:matrix-concentration},
	let $Z_i\stackrel{\text{law}}{=} A_i|\left(\|A_i - \mu\|\leq 10\sigma\sqrt{\frac{m}{\kappa}}\right)$ and note,
	$$\sum_{i \in S_A}(A_i-\mu)(A_i-\mu)^\T = \sum_{i=1}^{n}Y_i(Z_i-\mu)(Z_i-\mu)^\T \leq \sum_{i=1}^{n}(Z_i-\mu)(Z_i-\mu)^\T.$$
	As the random vectors $\{Z_i - \mu\}_{i=1}^n$ are mutually independent and  $(Z_i-\mu)(Z_i-\mu)^\T \preceq 10\sigma\sqrt{\frac{m}{\kappa}}\mathrm{I}_m$ almost surely, \eqref{eq:matrix-concentration} follows from the matrix Bernstein inequality \cite[Theorem 1.6.2]{T15}.

	For \eqref{eq:concentration}, since $\langle \sum_{j=1}^{i-1}Y_j(A_j -\mu), A_{i} -\mu \rangle \leq 0$,
	\begin{align*}
		\left\|\sum_{i\in S_A}A_i-\mu\right\|^2 \leq \sum_{1\in S_A}\|A_i-\mu\|^2 = \mathrm{Tr}\left(\sum\limits_{i\in S_A}^{n}(A_i -\mu)(A_i -\mu)^\T\right) \leq 2n \mathrm{Tr}\left(\sigma^2\mathrm{I}_m\right),
	\end{align*}
	where the last inequality is monotonicity of the trace.
	Also, recalling that $\mathrm{Cov}(A_i) \preceq \sigma^2\mathrm{I}_d,$ the fact that with high probability $|S_A| \geq \frac{n}{8}$ follows from Azuma's inequality. Indeed, for fixed $i \in [n]$, by Chebyshev's inequality,
	\[
	\Pr\left(\|A_i - \mu\|> 10\sigma\sqrt{\frac{m}{\kappa}}\right) \leq \frac{\kappa\EE\left[\|A_i-\mu\|^2\right]}{100\sigma^2m} = \frac{\kappa\Tr\left(\mathrm{Cov}(A_i)\right)}{100\sigma^2m} \leq \frac{\kappa}{100}.
	\]
	Since $A_i$ has an approximately symmetric law, then, since $A_i$ is independent from $\{Y_j, A_j\}_{j=1}^{i-1}$, by definition, 
	$$\Pr\left(\langle \sum_{j=1}^{i-1}Y_j(A_j -\mu), A_i -\mu \rangle \leq 0\right) \geq \frac{1}{4e^2}.$$
Taken together, the above displays imply
	$$\Pr\left(Y_i = 1| A_1,\dots,A_{i-1}\right) \geq \frac{1}{100}.$$
	Applying Azuma's inequality, as in \eqref{eq:azuma}, we get
	$$\Pr\left(|S_A| \geq \frac{n}{200}\right) = 1-e^{-\Omega(n)}.$$
	Finally, we address the \eqref{eq:anticoncentration} property.
	For fixed $i \in [n']$, let us define $\nu = \sum_{j=1}^{i-1}A_{j}Y_{j}$.
	So,
	\begin{align*}
		\Pr&\left(d(\theta^\T A'_i, \mathbb{Z}) \geq \kappa\min\left(1,\|\theta\|_\infty\sigma\right)|A_1Y_1,...,A_{i-1}Y_{i-1}\right) \\
		&=\Pr\left(d(\theta^\T A_i, \mathbb{Z}) \geq \kappa\min\left(1,\|\theta\|_\infty \sigma\right)|\langle \nu, A_i -\mu \rangle \leq 0 \text{ and } \|A_i - \mu\|\leq 10\sigma\sqrt{\frac{m}{\kappa}}\right)\\
		&\geq\Pr\left(d(\theta^\T A_i, \mathbb{Z}) \geq \kappa\min\left(1,\|\theta\|_\infty \sigma\right)|\langle \nu, A_i -\mu \rangle \leq 0 \right) - \frac{\kappa}{100} \geq \frac{\kappa}{2}.
	\end{align*}
	Here, the last inequality follows from Definition \ref{def:AC}, while the first inequality is a union bound on the anti-concentration event and $\{\|A_i - \mu\|\leq 10\sigma\sqrt{\frac{m}{\kappa}}\}$.
\end{proof} 
In light of the lemma, in the sequel, all computations will be made conditioned on the high-probability event defined by Lemma \ref{lem:subsample_props}, and we will only consider the selected columns. Thus, with a slight abuse of notation, from now on, the random variables $D$, $\hat{D}$, $A_i$, etc., will only be considered with respect to the selected columns. In particular, we will write $n$ for $|S_A|$.

\paragraph*{\bf Step II - bounding the argument:}
We will now show that for small $\theta$, the argument of $\Dh(\theta)$ is close to $2\pi np \langle \theta , \mu\rangle$. Before proceeding, we introduce an auxiliary parameter $\rad =\sqrt{\frac{10^3m^2}{\kappa^3}\ln\left(\frac{10^3m}{\kappa^3}\left(1 + \frac{\|\mu\|^2}{\sigma^2}\right)^{\frac{1}{m}}\right)}$, parametrizing the region in which we control $\Dh(\theta)$. We record here some facts which will be useful later on:
\begin{lemma} \label{lem:tproperties}
	Assume that Assumption \ref{ass:technical} is satisfied. If $\rad =\sqrt{\frac{10^3m^2}{\kappa^3}\ln\left(\frac{10^3m}{\kappa^3}\left(1 + \frac{\|\mu\|^2}{\sigma^2}\right)^{\frac{1}{m}}\right)}$, then for large enough $n$,
	\begin{align*} 
	\frac{1}{\sqrt{24\pi^2}} \leq \rad &\leq \frac{1}{80\sqrt{pm}}\nonumber\\
	\left(\frac{m}{\kappa}\right)^\frac{3}{2}\rad^3& \leq \frac{\sqrt{pn}}{50000}\\
	\left(1 + \frac{\|\mu\|}{\sigma}\right)^{3}\rad^3&  \leq \frac{\sqrt{pn}}{50000}
	\end{align*} 
\end{lemma}
\begin{proof}[\ifflag Proof \else  Proof of Lemma \ref{lem:tproperties} \fi]
	The lower bound in the first inequality is trivial. The upper bound follows from the upper bound in \eqref{eq:p-bound}, $p\leq \frac{\kappa^8}{10^{15}m^6}\frac{1}{\ln\left(10\left(1+\frac{\|\mu\|^2}{\sigma^2}\right)\right)}$ , because 
	\begin{align*}
		\beta &= \sqrt{\frac{10^3m^2}{\kappa^3}\left(\ln\left(\frac{100m}{\kappa^3}\right)+\ln\left(10\left(1 + \frac{\|\mu\|^2}{\sigma^2}\right)^{\frac{1}{m}}\right)\right)}\\
		 &\leq \sqrt{\frac{10^3m^2}{\kappa^3}\left(3\left(\frac{100m}{\kappa^3}\right)^{\frac{1}{3}}+\ln\left(10\left(1 + \frac{\|\mu\|^2}{\sigma^2}\right)^{\frac{1}{m}}\right)\right)}\\
		 &\leq \sqrt{\frac{10^5m^{2.5}}{\kappa^4}\ln\left(10\left(1 + \frac{\|\mu\|^2}{\sigma^2}\right)^{\frac{1}{m}}\right)} \leq \frac{1}{80\sqrt{pm}}.
	\end{align*}
The first inequality uses $\ln(x) = 3\ln(x^{\frac{1}{3}})\leq 3 x^{\frac{1}{3}}$ and the second $x+y \leq xy$, valid when $x,y\geq 1$.

For the first inequality concerning $\beta^3$, we use the above bound, which implies $\beta^3 \leq 10^9\frac{m^{\frac{15}{2}}}{\kappa^6}$. We thus compute,
\begin{align*}
	\left(\frac{m}{\kappa}\right)^\frac{3}{2}\rad^3 \leq 10^9\frac{m^9}{\kappa^8}\left(1 + \frac{\|\mu\|}{\sigma}\right)^2 \leq \frac{\sqrt{pn}}{50000}
\end{align*}
where the second inequality follows, again, from \eqref{eq:p-bound}.
Using a similar argument we also get, 
	\begin{align*}
		\left(1 + \frac{\|\mu\|}{\sigma}\right)^{5}\rad^3&\leq 10^9\frac{m^8}{\kappa^6}\left(1 + \frac{\|\mu\|}{\sigma}\right)^2 \leq \frac{\sqrt{pn}}{50000}
	\end{align*}
	which completes the proof.
\end{proof} The main observation in this step, is that, when we fix the columns $\{A_j\}_{j=1}^n$, a Taylor approximation implies the bound,
$$\left|\sum_{j=1}^n\arg\left(\mathbb{E}_S[\exp(\mathbf{1}_{i\in S}2\pi iA_j)]\right)-2\pi np \langle \theta , \mu \rangle \right|	\leq 2\pi p \left|\sum_{j=1}^n\langle \theta, A_j\rangle-n\langle \theta, \mu\rangle \right|+50 p \sum_{j=1}^n\left|\langle\theta, A_j\rangle \right|^3.$$
The term on the LHS controls $\arg(\Dh(\theta))$ and the two terms on the RHS can be bounded with \eqref{eq:concentration} and \eqref{eq:norm bound} respectively. We then prove:

\begin{lemma}
	\label{lem:argbound}
	With probability $1- e^{-\Omega(n)}$ over $A$, for $\|\theta\|\leq \frac{\rad}{\sigma\sqrt{p n}}$,
	$$|\arg(\Dh(\theta))-2\pi np \langle \theta , \mu\rangle|\leq 4\pi \rad\sqrt{pm} +\frac{5000({(m/\kappa)}^{\frac{3}{2}}+\|\mu\|^3/\sigma^3)\rad^3}{\sqrt{ pn}}.$$
\end{lemma}
\begin{proof}[\ifflag Proof \else Proof of Lemma \ref{lem:argbound}\fi]
	Let $f(x)=\arg(p\cdot \exp(2\pi i\cdot x) + (1-p))$ and  observe that $f(x)=\arctan\left(\frac{p\sin(2\pi x)}{p\cos(2\pi x) +(1-p)} \right)$. A calculation shows $f(0) = 0$, $f'(0) = 2\pi p$ and $f''(0) = 0$. Hence, we set $g(x)=f(x)-2\pi p x$ and, with a second-order Taylor approximation of $f(x)$ around $x=0$, we see that for every $x\geq 0$ there is some $x'\in [0,x]$ such that
	\begin{align*}
		g(x)&=\frac{d^3f}{dx^3}(x')x'^3.
	\end{align*}
	Another calculation shows that, as long as $p\leq 0.1$, we have $\frac{d^3f}{dx^3}(x')\leq 50p$ and hence $|g(x)|
	\leq 50|x|^3p$.	Now, note,
	\begin{align*}
		\arg(\Exp[\exp(\mathbf{1}_{j\in S}2\pi ix)])&=\arg(p\cdot \exp(2\pi i\cdot x) + (1-p))=f(x).
	\end{align*}
	Now,
	\begin{align*}
		\left|\arg(\Dh(\theta))-2\pi np \inner{\theta, \mu }\right|&=\left|\arg(\prod_{j=1}^n\Exp[\exp(\mathbf{1}_{j\in S}2\pi i \inner{\theta, A_{j}})])-2\pi np \inner{\theta, \mu}\right|\\&= \left|\sum_{j=1}^n\arg(\exp(\mathbf{1}_{j\in S}2\pi i \inner{\theta, A_{j}}))-2\pi np \inner{\theta, \mu }\right|
		\\&= \left|\sum_{j=1}^nf(\inner{\theta, A_j})-2\pi np \inner{\theta, \mu }\right|,
	\end{align*}
	where we understand $|\cdot|$ as referring to distance on the circle. The Taylor approximation given above shows that, when $\|\theta\| \leq\frac{\rad}{\sigma\sqrt{p n}}$, we can bound this distance (where now the bound will be expressed as a distance between real numbers),
	\begin{align*}
		\left|\sum_{j=1}^nf(\inner{\theta, A_j})-2\pi np \inner{\theta, \mu }\right|	&\leq 2\pi p \left|\sum_{j=1}^n\inner{\theta, A_j}-n\inner{\theta, \mu}\right|+50 p \sum_{j=1}^n\left|\inner{\theta, A_j}\right|^3\\
		&\leq 2\pi p\left|\left\langle\theta, \sum\limits_{j=1}^n(A_j - \mu) \right\rangle \right| +50p\|\theta\|^3\sum\limits_{j=1}^n\|A_j\|^3_2\\
		&\leq 2\pi p \|\theta\|\left\|\sum\limits_{j=1}^n(A_j - \mu)\right\| + 50p\|\theta\|^3n\left(10\sigma\sqrt{m/\kappa} + \|\mu\|\right)^3\\
		&\leq 4\pi p\rad\sqrt{\frac{m\sigma^2}{\sigma^2p}} +\frac{50(10\sigma\sqrt{m/\kappa}+\|\mu\|)^{3}\rad^3}{\sigma^3\sqrt{np}}\\
		&= 4\pi \rad\sqrt{pm} +\frac{50\left(10\sqrt{m/\kappa}+\frac{\|\mu\|}{\sigma}\right)^{3}\rad^3}{\sqrt{pn}},
	\end{align*}
	where the third inequality follows from \eqref{eq:norm bound} and the penultimate inequality being a consequence of \eqref{eq:concentration}.
\end{proof} 
The following corollary is now immediate.
\begin{corollary} \label{cor:paramters}
	With probability $1- e^{-\Omega(n)}$,
	for all $\|\theta\|\leq \frac{\rad}{\sigma\sqrt{p n}}$,
	$$|\arg(\Dh(\theta))-2\pi np \langle\theta, \mu\rangle| \leq \frac{1}{8}\pi.$$
\end{corollary}
\begin{proof}
		Lemma \ref{lem:tproperties} implies $\rad\sqrt{pm} \leq \frac{1}{80}$, $\rad^3\left(\frac{\|\mu\|}{\sigma}\right)^{3}  \leq \frac{\sqrt{pn}}{50000}$ and, $\rad^3(m/\kappa)^\frac{3}{2} \leq \frac{\sqrt{pn}}{50000}.$ Thus,
		$$4\pi \rad\sqrt{pm} +\frac{50\left(10\sqrt{m/\kappa}+\frac{\|\mu\|}{\sigma}\right)^{3}\rad^3}{\sqrt{pn}} \leq \frac{1}{8}\pi,$$
		and the Corollary follows from Lemma \ref{lem:argbound}.
\end{proof}
\paragraph*{\bf Step III - bounding the integral from below, near the origin:}
Next, we prove that the modulus of $\Dh$ is bounded, near the origin.
\begin{lemma} \label{lem:firstintegrallowerbound}
	The following holds,
	$$\int_{\|\theta\|\leq \frac{\rad}{\sigma\sqrt{p n}}} |\Dh(\theta)|d\theta \geq \left(\frac{1}{200\pi^3nmp}\right)^{\frac{m}{2}}\frac{1}{\sigma^{m-1}\left(\sigma^2 + \|\mu\|^2\right)^{\frac{1}{2}}}.$$
\end{lemma}
\begin{proof}[\ifflag Proof \else Proof of Lemma \ref{lem:firstintegrallowerbound}\fi]
	We have:
	\begin{align*}
		|\Dh(\theta)|&=\prod_{j=1}^n|\Exp[\exp(2\pi i \langle \theta, A_{j}\rangle)]|=\prod_{j=1}^n|(1-p)\cdot 1+p\exp(2\pi i \langle \theta, A_{j}\rangle)|\\&= \prod_{j=1}^n\sqrt{(1-p+p\cos(2\pi \langle \theta, A_{j}\rangle))^2+p^2\sin(2\pi \langle \theta, A_{j}\rangle)^2 }\\&\geq \prod_{j=1}^n(1-p+p\cos(2\pi \langle \theta, A_{j}\rangle))\geq \exp(-6\pi^2 p\sum\limits_{i=1}^n\langle \theta, A_i\rangle^2).
	\end{align*}

	Here the last inequality follows, as long as $p \leq 0.01$, from the elementary inequalities,
	\begin{align*}
		\cos(x)&\geq 1-x^2\\
		\ln(1-x)&\geq -\frac{3}{2}x\ \ \text{ when } |x| \leq \frac{1}{2}.
	\end{align*}
	Indeed, it's enough to consider $\inner{\theta, A_j} \in [-1,1]$, for which,
	$$\ln(1-p +p\cos\left(2\pi\inner{\theta, A_j}\right)) \geq \ln\left(1- p4\pi^2\inner{\theta, A_j}^2\right) \geq -6\pi^2p\inner{\theta, A_j}^2.$$
	Hence, by applying the \eqref{eq:matrix-concentration} property to the obtained bound, we get,
	\begin{align*}
		|\Dh(\theta)| &\geq \exp\left(-12\pi^2p\left(n\langle\theta, \mu\rangle^2 +\sum\limits_{i=1}^n \langle \theta, A_i - \mu\rangle^2   \right)\right) \geq \exp\left(-24\pi^2np\left(\langle\theta, \mu\rangle^2 +\sigma^2\theta\theta^\T  \right)\right)\\
		&\geq\exp\left(-24\pi^2\theta\left(\mu^\T\mu + \sigma^2\mathrm{I}_m\right)\theta^\T np\right).
	\end{align*}
	Let $Y \sim \mathcal{N}\left(0,\frac{1}{48\pi^2np}(\mu^\T\mu + \sigma^2\mathrm{I}_m)^{-1}\right)$, then,
	\begin{align} \label{eq:probLowerBound}
		\int_{\|\theta\|\leq \frac{\rad}{\sigma\sqrt{pn}}} |\Dh(\theta)|d\theta &\geq \int_{\|\theta\|\leq \frac{\rad}{\sigma\sqrt{pn}}} \exp\left(-24\pi^2\theta\left(\mu^\T\mu + \sigma^2\mathrm{I}_m\right)\theta^\T np\right)d\theta\nonumber\\
		&=\frac{1}{\sqrt{\det\left(96\pi^3np\left(\mu^\T\mu + \sigma^2\mathrm{I}_m\right)\right)}}\Pr\left(\|Y\|\leq \frac{\rad}{\sigma \sqrt{p n}}\right)\nonumber\\
		&= \frac{1}{(96\pi^3np)^{\frac{m}{2}}(\|\mu\|^2 + \sigma^2)^{\frac{1}{2}}\sigma^{m-1}}\Pr\left(\|Y\|\leq \frac{\rad}{\sigma \sqrt{pn}}\right).
	\end{align}
	By Chebyshev's inequality,
	\begin{align*}
		\Pr\left(\|Y\|\geq \frac{\rad}{\sigma\sqrt{pn}}\right) \leq \frac{\sigma^2pn}{\rad^2}\Tr\left(\frac{1}{48\pi^2np}(\mu^\T\mu + \sigma^2\mathrm{I}_m)^{-1}\right) \leq \frac{m}{48\pi^2\rad^2} \leq \frac{1}{2},
	\end{align*}
	where in the last inequality, we have used $\rad\geq\sqrt{\frac{10^3m^2}{\kappa^3}} \geq \sqrt{2\frac{m}{48\pi^2}}$. The proof concludes by plugging this estimate into \eqref{eq:probLowerBound}.
\end{proof} To handle the integral in Theorem \ref{thm:fourier}, we note that when $\|t-pn\mu\|$ is small enough, as dictated by Theorem \ref{thm:main}, Corollary \ref{cor:paramters} implies,
$$|\arg(\Dh(\theta)\exp(2\pi i \langle \theta, t\rangle))|\leq|\arg(\Dh(\theta)) - 2\pi n p\langle\theta, \mu\rangle|+|\arg(\exp(2\pi i \langle \theta, t -pn\mu\rangle))| \leq \frac{1}{4} \pi,$$
whenever $\|\theta\| \leq \frac{\rad}{\sigma\sqrt{p n}}$ (recall  $\rad= \sqrt{\frac{10^3m^2}{\kappa^3}\ln\left(\frac{10^3m}{\kappa^3}\left(1 + \frac{\|\mu\|^2}{\sigma^2}\right)^{\frac{1}{m}}\right)}$). For a complex number $z$, we have $\Re(z) = \cos(\arg(z))|z|$. Thus, from Lemma \ref{lem:firstintegrallowerbound}, we deduce the following result.
\begin{lemma}
	Let $t \in \mathbb{R}^m$ such that
	$$\|t-pn\mu \|\leq \frac{\sigma \kappa^{\frac{3}{2}}}{10^4\ln\left(\frac{10^3m}{\kappa^3}\left(1 + \frac{\|\mu\|^2}{\sigma^2}\right)^{\frac{1}{m}}\right)}\frac{\sqrt{pn}}{m}.$$ 
	Then, for $\|\theta\| \leq \frac{\rad}{\sigma\sqrt{p n}}$,
	$$\Re\left[\Dh(\theta)\exp(-2\pi i\langle \theta, t \rangle )\right]\geq 0,$$
	and 
	$$\Re\left[\int\limits_{\|\theta\|\leq \frac{\rad}{\sigma\sqrt{ pn}}}\Dh(\theta)\exp(-2\pi i\langle \theta, t \rangle )d\theta\right] \geq \cos\left(\frac{\pi}{4}\right)\left(\frac{1}{200\pi^3nmp}\right)^{\frac{m}{2}}\frac{1}{\sigma^{m-1}\left(\sigma^2 + \|\mu\|^2\right)^{\frac{1}{2}}}.$$\label{lem:integral_lower_bound}
\end{lemma}
\paragraph*{\bf Step IV - exponential decay of the Fourier spectrum:}
To show that the Fourier spectrum decays rapidly, we employ an $\epsilon$-net argument over a very large box. The main difficulty is that $\langle \theta, D \rangle$ can be close to an integer, irregardless of the value $\|\theta\|$. Our anti-concentration assumption allows us to avoid this. Specifically, Item 3 in Lemma \ref{lem:subsample_props} implies that for any given $\theta$ in a dense enough net, we can expect many columns to satisfy that $\langle \theta, A_i\rangle$ is far from any integer point. Formally, we prove:
\begin{lemma} \label{lem:spectrumdecay}
	Assume that Assumption \ref{ass:technical} is satisfied. Then, with probability $1-e^{-\Omega(\kappa n)}$, we have $$|\Dh(\theta)|\leq \exp\left(-\frac{1}{80}\kappa^3 n(1-p)p\pi^2\min\left(1, \|\theta\|^2_\infty\sigma^2\right)\right),$$
	for 
	$\theta\in \tilde{V},$
	where $V$ is the fundamental domain, as in \eqref{eq:funddomain}, and
	$$ \tilde{V} := \left[-e^{\frac{\kappa^3pn}{80m}}\frac{1}{4\sigma n^2},e^{\frac{\kappa^3pn}{80m}}\frac{1}{4\sigma n^2}\right]^m \cap V.$$  \label{lem:normbound}
\end{lemma}
\begin{proof}[\ifflag Proof \else Proof of Lemma \ref{lem:normbound}\fi]
	We begin with a technical calculation which will help control the size of the net. Observe that \eqref{eq:p-bound} implies
	$$160m \leq \sqrt{\kappa n} \implies \sqrt{\frac{m}{\kappa}} \leq \frac{1}{2}\sqrt{\frac{\kappa n}{80m}} \implies \leq \sqrt{\frac{m}{\kappa}} \leq \frac{1}{2}e^{\frac{\kappa n}{80m}},$$
	and similarly,
	$$ \frac{\|\mu\|}{\sigma} \leq \frac{1}{2}e^{\frac{\kappa n}{80m}}.$$
	
	Now, let $N$ be an $\epsilon$-net of $\tilde{V}$ for $\epsilon=\frac{1}{2n^2\left(\sigma\sqrt{m/\kappa} + \|\mu\|\right)}$.
	Standard arguments show that one can take $|N|\leq \left(\sqrt{\frac{m}{\kappa}} + \frac{\|\mu\|}{\sigma}\right)^me^{\frac{\kappa}{80} n} \leq e^{\frac{\kappa}{40} n}$, when $n$ is large enough.
	Here we have used the bound, $\sqrt{\frac{m}{\kappa}} + \frac{\|\mu\|}{\sigma} \leq e^{\frac{\kappa n}{80m}}$, established above.
	
	For $\theta\in N$, define $$\varphi(\theta)=\frac{\kappa}{4}\min\left(1,\|\theta\|_\infty\sigma\right) \text{ and } E(\theta)=\{j:d(\inner{\theta, A_j},\mathbb{Z}) \geq \varphi(\theta) \}.$$
	If we fix $\theta$ and set $X_i:={\bf{1}}_{i \in E(\theta)}$ then $|E(\theta)| = \sum\limits_{i=1}^nX_i$. The statement of Item 3 in Lemma \ref{lem:subsample_props}  is  $\mathbb{E}\left[X_i|X_1,\dots, X_{i-1}\right] \geq \frac{\kappa}{2}$. Applying Azuma's inequality \eqref{eq:azuma}, we get:
	\begin{align*}
		\Pr\left[|E(\theta)| \leq \frac{\kappa}{4}n\right]\leq \exp\left(-\frac{\kappa}{8}n\right).
	\end{align*}
	In this case, by the union bound,
	$$\mathbb{P}\left(\exists \theta \in N\ :\ |E(\theta)| \leq \frac{\kappa}{4}n\right) \leq e^{-\frac{\kappa}{16}n
	}.$$
	If $j\in E(\theta)$, since $\varphi(\theta) \leq \frac{1}{4}$, we have:
	\begin{align*}
		|\Exp_{S}[\exp(\mathbf{1}_{j\in S}\cdot 2\pi i \langle \theta, A_j\rangle )]|&=\sqrt{(1-p+p\cos(2\pi \inner{\theta, A_j}))^2+p^2\sin(2\pi \inner{\theta, A_j})^2 }\\
		&= \sqrt{1+2p^2-2p+2(1-p)p\cos(2\pi\inner{\theta, A_j})}\\
		&\leq \sqrt{1+2p^2-2p+2(1-p)p\cos(2\pi\varphi(\theta))}\\
		&\leq \sqrt{1-2(1-p)p(2\pi\varphi(\theta))^2/5}\\
		&\leq 1-(1-p)p(2\pi\varphi(\theta))^2/5\\
		&\leq 1-\frac45(1-p)p\pi^2\frac{\kappa^2}{16}\min\left(1,\|\theta\|_\infty^2\sigma^2\right).
	\end{align*}
	Observe that $\left|\Exp_S[\exp(\mathbf{1}_{j\in S}2\pi i x)]\right|=\sqrt{(1-p+p\cos(2\pi x))^2+p^2\sin(2\pi x)^2 }$ is $4\pi p$-Lipschitz in $x$, as long as $p\leq\frac14$.
	Take an arbitrary $\theta\in \tilde{V}$ and let $\theta'$ be the closest point in $N$.
	Recall that $\max\limits_i \|A_i\| \leq10 \sigma\sqrt{\frac{m}{\kappa}} +\|\mu\|$, which follows from \cref{eq:norm bound}. So, by our choice of $\epsilon$ and with the Cauchy-Schwartz inequality,
	$$|\inner{\theta-\theta', A_j}| \leq \frac{1}{2n^2}.$$
	Thus:
	\begin{align*}
		|\Dh(\theta)|&=\prod_{j=1}^n\left|\Exp_{S}[\exp(\mathbf{1}_{j\in S}2\pi i \inner{\theta, A_j} ) ]\right|\\
		&\leq \prod_{j\in E(\theta')}\left(\left|\Exp_{S}[\exp(\mathbf{1}_{j\in S}2\pi i \inner{\theta', A_j} )]\right| +4\pi p|\inner{\theta-\theta', A_j}|\right)\\
		&\leq \prod_{j\in E(\theta')}\left(1-\frac45(1-p)p\pi^2\frac{\kappa^2}{16}\min\left(1,\|\theta'\|_\infty^2\sigma^2
		\right)+4\pi p|\inner{\theta-\theta', A_j}|\right)\\
		&\leq \exp\left(-\frac{4}{5}(1-p)p\pi^2\frac{\kappa^2}{16}|E(\theta' )|\min\left(1,\|\theta'\|_\infty^2\sigma^2\right)+4\pi p\sum_{j=1}^n|\inner{\theta-\theta', A_j}|\right) \\
		&\leq e^{\frac{2\pi }{n}}\exp\left(-\frac{1}{5}\frac{\kappa^3}{16} n(1-p)p\pi^2\min\left(1,\|\theta'\|_\infty^2\sigma^2\right)\right)\\
		&\leq e^{\frac{2\pi}{n}} \exp\left(-\frac{1}{5}\frac{\kappa^3}{16} n(1-p)p\pi^2\min\left(1, (1-\epsilon)^2\|\theta\|^2_\infty\sigma^2\right)\right)\\
		&\leq \exp\left(-\frac{1}{80}\kappa^3 n(1-p)p\pi^2\min\left(1, \|\theta\|^2_\infty\sigma^2\right)\right),
	\end{align*}
	where the last inequality holds, since by $\eqref{eq:p-bound}$, $\kappa^3pn\geq 10^{20}$.
\end{proof} By properly integrating the inequality, we have thus obtained:
\begin{lemma} \label{lem:DExpDecay}
With probability $1-e^{-\Omega(\kappa n)}$, the following inequality holds:
	\begin{align*}
		\int\limits_{B}|\Dh(\theta)|d\theta\leq 2\left(\frac{1}{10^5pmn \sigma^2 \left(1 +\frac{\|\mu\|^2}{\sigma^2}\right)^{\frac{1}{m}}}\right)^{\frac{m}{2}},
	\end{align*}
	where
	$$B = V \cap \left\{\|\theta\|_\infty < e^{\frac{\kappa^3pn}{80m}}\frac{1}{4\sigma n^2}\right\} \cap \left\{\|\theta\|\geq \frac{\rad}{\sigma\sqrt{ pn}}\right\}.$$
\end{lemma}
\begin{proof}[\ifflag Proof \else Proof of Lemma \ref{lem:DExpDecay} \fi]
	From \cref{lem:normbound}, when $p < 0.1$, and $\sigma\|\theta\|_\infty \leq \sigma\|\theta\| \leq \sqrt{m}$, we have
	$$|D(\theta)|\leq \exp\left(-\frac{\kappa^3 pn\sigma^2\|\theta\|^2}{10m}\right).$$ 
Now, if $Y \sim \mathcal{N}(0,\mathrm{I}_m)$ and $n \geq 100m^6$:
	\begin{align*}
		\int\limits_{\sqrt{m}\geq \|\theta\|\geq \frac{\rad}{\sigma\sqrt{p n}}}|D(\theta)|d\theta&\leq  \int\limits_{\|\theta\|\geq \frac{\rad}{\sigma\sqrt{p n}}} \exp\left(-\frac{\kappa^3 pn\sigma^2\|\theta\|^2}{10m}\right)d\theta\\
		&\leq\left(\frac{5m}{\kappa^3 p n\sigma^2}\right)^{\frac{m}{2}}\mathbb{P}\left(\|Y\| \geq \frac{\rad}{\sigma\sqrt{p n}}\sqrt{\frac{\kappa^3 p n\sigma^2}{5 m}}\right)
		\\
		&= \left(\frac{5m}{\kappa^3 p n\sigma^2}\right)^{\frac{m}{2}}\mathbb{P}\left(\|Y\|^2 \geq \rad^2\frac{\kappa^3  }{5 m}\right)\\
		&= \left(\frac{5m}{\kappa^3 p n \sigma^2}\right)^{\frac{m}{2}}\mathbb{P}\left(\|Y\|^2 \geq 200m\ln\left(\frac{10^3m}{\kappa^3}\left(1 + \frac{\|\mu\|^2}{\sigma^2}\right)^{\frac{1}{m}}\right)\right).\\
	\end{align*}
	The last equality holds since $\rad = \sqrt{\frac{10^3m^2}{\kappa^3}\ln\left(\frac{10^3m}{\kappa^3}\left(1 + \frac{\|\mu\|^2}{\sigma^2}\right)^{\frac{1}{m}}\right)}$.
	By Lemma \ref{lem:chisquareconc},
	\begin{align*} 
	\mathbb{P}\left(\|Y\|^2 \geq 200m\ln\left(\frac{10^3m}{\kappa^3}\left(1 + \frac{\|\mu\|^2}{\sigma^2}\right)^{\frac{1}{m}}\right)\right) &\leq \exp\left(-50m\ln\left(\frac{10^3m}{\kappa^3}\left(1 + \frac{\|\mu\|^2}{\sigma^2}\right)^{\frac{1}{m}}\right)\right)\\
	 &\leq \left(\frac{\kappa^3}{10^6m}\right)^{\frac{m}{2}}\frac{1}{\left(1 + \frac{\|\mu\|^2}{\sigma^2}\right)^{\frac{1}{2}}}.
	\end{align*}
	Hence, 
	$$\int\limits_{\sqrt{m}\geq \|\theta\|\geq \frac{\rad}{\sigma\sqrt{ n}}}|D(\theta)|d\theta\leq \left(\frac{1}{10^5pmn \sigma^2 }\right)^{\frac{m}{2}}\frac{1}{\left(1 + \frac{\|\mu\|^2}{\sigma^2}\right)^{\frac{1}{2}}}.$$
	Furthermore, for all 
	$\theta\in \left[-\frac{e^{\frac{\kappa^3pn}{80m}}}{4\sigma n^2},\frac{e^{\frac{\kappa^3pn}{80m}}}{4\sigma n^2}\right]^m$ with $\|\theta\|_\infty\sigma\geq 1,$ Lemma \ref{lem:normbound} also implies, $$|D(\theta)|\leq \exp\left(-\frac{\kappa^3 p n}{40m}\right).$$
	So:
	\begin{align*}
		\int\limits_{ \frac{e^{\frac{\kappa^3pn}{80m}}}{4\sigma n^2} \geq \|\theta\|_\infty \geq \frac{1}{4}} |D(\theta)|d\theta\leq \frac{1}{\sigma^mn^{2m}}\exp\left(\frac{\kappa^3pn}{80m}\right)^m \cdot \exp\left(-\frac{\kappa^3 p n}{40}\right) =\frac{1}{\sigma^mn^{2m}}\exp\left(-\frac{\kappa^3 p n}{80}\right) .
	\end{align*}
	Summing the previous two inequalities yields
	\begin{align*}
		\int\limits_{B}|\Dh(\theta)|d\theta\leq \left(\frac{1}{10^5pmn \sigma^2 \left(1 +\frac{\|\mu\|^2}{\sigma^2}\right)^{\frac{1}{m}}}\right)^{\frac{m}{2}} +\left(\frac{1}{\sigma^2n^{4}}\right)^{\frac{m}{2}}\exp\left(-\frac{\kappa^3 p n}{80}\right).
	\end{align*}
	 The proof concludes by recalling the assumption from \eqref{eq:p-bound}, which implies a lower bound on $\kappa^3pn$, and hence affords the bound,
	 \begin{align*}
	 &\left(\frac{1}{10^5pmn \sigma^2 \left(1 +\frac{\|\mu\|^2}{\sigma^2}\right)^{\frac{1}{m}}}\right)^{\frac{m}{2}} +\left(\frac{1}{\sigma^2n^{4}}\right)^{\frac{m}{2}}\exp\left(-\frac{\kappa^3 p n}{80}\right)\\
	 \ \ \ \ \ &\leq 2\left(\frac{1}{10^5pmn \sigma^2 \left(1 +\frac{\|\mu\|^2}{\sigma^2}\right)^{\frac{1}{m}}}\right)^{\frac{m}{2}}.
	 \end{align*}
\end{proof} \paragraph*{Proving Theorem \ref{thm:main} and \ref{cor:maindisc}:}
If the support of $\mathcal{D}$ is contained in $\Z^m$, then Lemma \ref{lem:integral_lower_bound} and Lemma \ref{lem:DExpDecay} are enough to prove \Cref{cor:maindisc}.
\begin{proof}[Proof of \Cref{cor:maindisc}]
	Let $B$ be defined as in Lemma \ref{lem:DExpDecay} and note that since $2\sigma n^2\leq  e^{\frac{\kappa^3pn}{80m}}$,
	$$\left[-\frac{1}{2}, \frac{1}{2}\right]^m = V \subset \left\{\|\theta\|_\infty < e^{\frac{\kappa^3pn}{80m}}\frac{1}{4\sigma n^2}\right\}.$$
	Thus,
	$$	B = \left[-\frac{1}{2}, \frac{1}{2}\right]^m \cap \left\{\|\theta\|\geq \frac{\rad}{\sigma\sqrt{ pn}}\right\}.$$
	 By Lemma \ref{lem:DExpDecay}, 
	\begin{align} \label{eq:mainintegralupperbound}
		\int\limits_{B}|\Dh(\theta)|d\theta\leq 0.2\left(\frac{1}{200\pi^3nmp\sigma^2 \left(1 +\frac{\|\mu\|^2}{\sigma^2}\right)^{\frac{1}{m}}}\right)^{\frac{m}{2}}.
	\end{align}
	Moreover, by Lemma \ref{lem:integral_lower_bound},
	\begin{align} \label{eq:mainintegrallowerbound}
		\Re\left[\int_{\|\theta\| \leq \frac{\rad}{\sigma\sqrt{p n}}}\Dh(\theta)\exp(-2\pi i\langle \theta, t \rangle )d\theta\right] &\geq \cos\left(\frac{\pi}{4}\right) \left(\frac{1}{200\pi^3nmp}\right)^{\frac{m}{2}}\frac{1}{\sigma^{m-1}\left(\sigma^2 + \|\mu\|^2\right)^{\frac{1}{2}}}\nonumber \\
		&=\cos\left(\frac{\pi}{4}\right) \left(\frac{1}{200\pi^3nmp\sigma^2\left(1 +\frac{\|\mu\|^2}{\sigma^2}\right)^{\frac{1}{m}}}\right)^{\frac{m}{2}}.
	\end{align}
	So,
	\begin{align*}
		\Pr[D=t] &= \Re\left[\int_{\theta\in [-\frac{1}{2},\frac{1}{2}]^m}\Dh(\theta)\exp(-2\pi i\langle \theta, t \rangle )d\theta\right]\nonumber\\
		&\geq \Re\left[\int_{\|\theta\| \leq \frac{\rad}{\sigma\sqrt{p n}}}\Dh(\theta)\exp(-2\pi i\langle \theta, t \rangle )d\theta\right] - \int_{ B}|\Dh(\theta)|d\theta\nonumber\\
		&\geq \frac{1}{2}\left(\frac{1}{200\pi^3nmp\sigma^2\left(1 +\frac{\|\mu\|^2}{\sigma^2}\right)^{\frac{1}{m}}}\right)^{\frac{m}{2}}.
	\end{align*}
	Finally, using the multiplicative Chernoff bound we can see that:
	\begin{align*}
		\Pr[|S|\notin [0.5pn,1.5pn]]&=\Pr\left[\left||S|-\Exp[|S|]\right|\geq \frac12\Exp[|S|]]\right]\leq 2\exp(-pn/12).
	\end{align*}
	By \eqref{eq:p-bound}, $pn \geq \left(10m\left(1 +\frac{\|\mu\|}{\sigma}\right)\right)^{20}.$ So, we have $\Pr[D=t]>2\exp(-pn/12)$, which implies the existence of a suitable $S$ with $|S|\in [0.5pn,1.5pn]$ and $A\mathbf{1}_S=t$.
\end{proof}
The proof of the continuous case requires an extra step to control the Fourier transform outside the domain of Lemma \ref{lem:DExpDecay}. To deal with continuous distributions, we define $R\sim \mathcal{N}(0,\gamma \mathrm{I}_m)$ for $\gamma$ to be determined later and $H=D+R$. If $f_H$ is the density of $H$, we will show that, for appropriate $t$, $f_H(t)$ is positive, from which it will follow that with high probability there exists a  suitable set $S$ such that $\|A\mathbf{1}_S-t\|$ is small.

\begin{proof}[Proof of \cref{thm:main}]
	As in the proof of \cref{cor:maindisc}, let $B$ be defined as in Lemma \ref{lem:DExpDecay}. Since the columns of $A$ are now absolutely continuous we have $V = \R^m$, and so,
	$$	B = \left\{\|\theta\|_\infty < e^{\frac{\kappa^3pn}{80m}}\frac{1}{4\sigma n^2}\right\} \cap \left\{\|\theta\|\geq \frac{\rad}{\sigma\sqrt{ pn}}\right\}.$$ 
	Another difference from the discrete case is that now, by the multiplication-convolution theorem (Theorem \ref{thm:multconv}), we have $\hat{H}(\theta) = \hat{D}(\theta)\cdot \hat{R}(\theta) = e^{-\frac{\|\theta\|^2\gamma}{2}}\hat{D}(\theta).$
	So, if $\theta_0 := \exp\left(\frac{\kappa^3pn}{80m}\right)\frac{1}{\sigma n^2}$, the estimates in \eqref{eq:mainintegralupperbound} and \eqref{eq:mainintegrallowerbound} imply,
	\begin{align*}
		\Re&\left[ \int\limits_{[-\theta_0,\theta_0]^m} \hat{H}(\theta)\exp(-2\pi i\langle \theta, t \rangle )d\theta\right]\\
		&\geq e^{-\frac{\rad^2\gamma}{\sigma^2 p n}}\left(\Re\left[\int_{\|\theta\| \leq \frac{\rad}{\sigma\sqrt{ pn}}}\Dh(\theta)\exp(-2\pi i\langle \theta, t \rangle )d\theta\right] - \int_{B}|\Dh(\theta)|d\theta\right)\nonumber\\
		&\geq \frac{1}{4}\left(\frac{1}{200\pi^3nmp\sigma^2\left(1 +\frac{\|\mu\|^2}{\sigma^2}\right)^{\frac{1}{m}}}\right)^{\frac{m}{2}},
	\end{align*}
	where the last inequality holds as long as $\gamma < \frac{\sigma^2pn}{\beta^2}$.
	Since we wish to invoke Theorem \ref{thm:fourier}, we need to bound $\int\limits_{\R^m} \hat{H}(\theta)\exp(-2\pi i\langle \theta, t \rangle )d\theta$ from below. In light of the above computation, it will be enough to choose $\gamma$ such that,
	\begin{equation} \label{eq:gammadef}
		\int\limits_{\|\theta\|_\infty \geq \theta_0} |\hat{H}(\theta)|d\theta \leq \frac{1}{8}\left(\frac{1}{200\pi^3nmp\sigma^2\left(1 +\frac{\|\mu\|^2}{\sigma^2}\right)^{\frac{1}{m}}}\right)^{\frac{m}{2}}.
	\end{equation}
	Towards finding an appropriate $\gamma$, let $Y$ stand for the standard Gaussian in $\mathbb{R}^m$, and compute,
	\begin{align*}
		\int\limits_{\|\theta\|_\infty \geq \theta_0} |\hat{H}(\theta)|d\theta &\leq \int\limits_{\|\theta\|\geq \theta_0} e^{\frac{-\gamma\|\theta\|^2}{2}}d\theta\\
		&= \left(\frac{2\pi}{\gamma}\right)^\frac{m}{2}\Pr\left(\|Y\|^2\geq \gamma\theta_0^2\right)\\
		&\leq \left(\frac{2\pi}{\gamma}\right)^\frac{m}{2}e^{-\frac{\gamma\theta_0^2}{3}},
	\end{align*}
	where the last inequality is Lemma \ref{lem:chisquareconc}.
	Let us choose now
	$$\gamma = \exp\left(-\kappa^3\frac{pn}{40m}\right)\sigma^2 n^5 = \frac{ n}{\theta_0^2},$$ for which \eqref{eq:gammadef} holds. Also, $\gamma < \frac{ n\sigma^2}{\beta^2}$ as required earlier and $\gamma\theta_0^2 \geq 7m$, as required by Lemma \ref{lem:chisquareconc}. If $f_H(t)$ is the density of $H$ at $t$, Theorem \ref{thm:fourier} along with \eqref{eq:gammadef} give,
	\begin{align*}
		f_H(t) &= \Re\left[\int\limits_{\mathbb{R}^m}\hat{H}(\theta)\exp(-2\pi i \langle \theta, t\rangle)d\theta\right]\\
		&\geq \Re\left[\int\limits_{\|\theta\|_\infty\leq \theta_0 }\hat{H}(\theta)\exp(-2\pi i \langle\theta, t\rangle)d\theta\right] -  \int\limits_{\|\theta\|_\infty> \theta_0}|\hat{H}(\theta)|d\theta\\
		&\geq \frac{1}{8}\left(\frac{1}{200\pi^3nmp\sigma^2\left(1 +\frac{\|\mu\|^2}{\sigma^2}\right)^{\frac{1}{m}}}\right)^{\frac{m}{2}} > 0.
	\end{align*}
	Now, recall that $H = D + R$, where $R\sim \mathcal{N}(0,\gamma \mathrm{I}_m)$. By \eqref{eq:p-bound}, $n > \left(10m\left(1 +\frac{\|\mu\|}{\sigma}\right)\right)^{20}$. Hence, by applying Lemma \ref{lem:chisquareconc} again,
	$$\Pr\left(\|R\| \geq \sqrt{\gamma n}\right) \leq e^{-\frac{n}{3}} \leq \frac{1}{2}f_H(t).$$
	We conclude that with probability $1-e^{-\Omega(\kappa n)}$ over $A$, there exists some $T \subset [n]$ and some $v \in \mathbb{R}^m$, with $\|v\| \leq \exp\left(-\frac{\kappa^3pn}{80m}\right)\sigma n^2$, such that
	$$A\mathbf{1}_T- v = t,$$
	Or, in other words, $\|A\mathbf{1}_T - t\| = \|v\| \leq \exp\left(-\frac{\kappa^3pn}{80m}\right)\sigma n^3.$
	Finally, we finish, as in the proof of the discrete case, with the multiplicative Chernoff bound:
	\begin{align*}
		\Pr[|S|\notin [0.5pn,1.5pn]]&=\Pr\left[\left||S|-\Exp[|S|]\right|\geq \frac12\Exp[|S|]]\right]\leq 2\exp(-pn/12).
	\end{align*}
	Again, since $\left(\left(1 +\frac{\|\mu\|}{\sigma}\right)10m\right)^{20} \leq pn$, we have,
	$$\Pr\left[\|D-t\| < \exp\left(-\frac{\kappa^3pn}{80m}\right)\sigma n^2 \right]>2\exp(-pn/12),$$
	 which implies, by a union bound, the existence of a suitable $T$ with $|T|\in [0.5pn,1.5pn]$ and $$\|A\mathbf{1}_T-t\| \leq \exp\left(-\frac{\kappa^3pn}{80m}\right)\sigma n^3.$$
\end{proof}  \section{Integrality Gap Bounds}
\label{sec:gap-bounds}
\subsection{Linear Programs and their Duals}
\label{sec:lp-prelim}

We begin with the basic linear programs relevant to this work. We will examine the integrality gap with respect to primal LP defined as follows:
\begin{align*}
\val_\mathsf{LP}(A,b,c) := \max_x         & \quad \val_c(x)=c^\T x \\
\text{s.t. } & Ax\leq b, x \in [0,1]^n,      \tag{Primal LP}\label{primal-lp}
\end{align*}
This LP has the corresponding dual linear program, which we can express in
the following convenient form:
\begin{align*}
\val^*_\mathsf{LP}(A,b,c) := \min_u         & \quad \val_b^*(u)=b^\T u + \left\|\left(c-A^\T u\right)^+\right\|_1 \\
\text{s.t. } & u \geq 0.  \tag{Dual LP}\label{dual-lp}          
\end{align*}
By strong duality, assuming \eqref{primal-lp} is bounded and feasible, we have that
$\val_\mathsf{LP}(A,b,c)=\val^*_\mathsf{LP}(A,b,c)$.

For any primal solution $x$ and dual solution $u$ to the above pair of
programs, we will make heavy use of the standard formula for the primal-dual
gap:
\begin{align*} 
\val^*_b(u)-\val_c(x) &= b^\T u + \left\|\left(c-A^\T u\right)^+\right\|_1 - c^\T x\nonumber\\
&=(b-Ax)^\T u + \left(\langle x, (A^\T u - c)^+\rangle + \langle {\bf 1}_n-x, (c-A^\T u)^+\rangle \right).  \tag{Gap Formula}\label{primal-dual gap}
\end{align*}

In the sequel, we will let $x^*$ denote the optimal solution to
\ref{primal-lp} and $u^*$ denote the optimal solution to \ref{dual-lp}.
For all the LP distributions we work with, the objective $c$ is continuously
distributed (either Gaussian or exponentially distributed), from which it can
be verified that conditioned on the feasibility of \ref{primal-lp} (which
depends only on $A$ and $b$) both $x^*$ and $u^*$ are uniquely defined almost surely. Moreover, if $i \in [n]$, we shall use $A_{.,i}$ to refer to the $i^{\mathrm{th}}$ column of $A$ and extend this definition to other matrices as well.

Once the optimal solution is found for \ref{primal-lp}, one can round its fractional coordinates to an integral vector. While the rounded vector may not be a feasible solution, we shall use the fact that, as long as the $A_{.,i}$ are sufficiently bounded, it cannot be very far from a feasible solution.
\begin{lemma}[{\cite[Lemma 7]{BDHT22}}] \label{lem:rounding}
	There exists $x' \in \{0,1\}^n$, such that,
	$$\|A(x^* - x')\| \leq \sqrt{m}\cdot \max\limits_{i\in [n]}\|A_{.,i}\|.$$
\end{lemma} 
For the optimal solution $x^*$, define,
\begin{equation} \label{eq:zero coordinates}
N_0 := \{ i \in [n]| x_i^* = 0\} , \text{and } N_1 := \{ i \in [n]| x_i^* = 1\}.
\end{equation}
Let $W$ be the matrix with columns $W_{\cdot, i}=\begin{bmatrix}
c_i&A_{\cdot, i}
\end{bmatrix}^\T$.
The distribution of the columns of $W$ with indices in $N_0$ plays an important role in the proofs of \cref{thm:ipgap_centered,thm:ipgap_packing}. The following lemma essentially says that conditioning on the set $N_0$ and on the values of the non-$0$-columns preserves the mutual independence of the $0$-columns. The conditional distribution of the the $0$-columns is also identified. The reader is referred to \cite[Lemma 5]{BDHT22} for the proof.
\begin{lemma} \label{lem:conditionaldist}
	Let $N \subset [n]$. Conditional on $N_0 = N$ and on the values of sub-matrix $W_{\cdot, [n] \setminus N}$, $x^*$ and $u^*$ are almost surely well defined. Moreover, if $i \in N$, then $W_{\cdot, i}$ is independent from $W_{\cdot, N \setminus \{i\}}$ and the conditional law $W_{\cdot, i}\mid i\in N$ is the same as $W_{\cdot, i}\mid \ust A_{\cdot,i}-c_i > 0$.
\end{lemma}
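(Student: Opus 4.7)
The plan is to leverage LP duality together with almost-sure uniqueness of the primal-dual pair and the a priori mutual independence of the columns of $W$. We will show that, once $W_{\cdot,[n]\setminus N}$ is fixed, the event $\{N_0=N\}$ decomposes into a product of events indexed by $i\in N$, each depending only on the column $W_{\cdot,i}$.

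First, I would establish that $x^*$ and $u^*$ are almost surely uniquely defined. Since $c$ has a continuous distribution (Gaussian or exponential in our applications) and \eqref{primal-lp} has only finitely many vertices (depending only on $(A,b)$), two distinct vertices a.s.\ have distinct objective values, giving uniqueness of $x^*$; a symmetric argument on the dual side yields uniqueness of $u^*$, equivalently almost-sure strict complementarity. In particular, for every $i\in N_0$ one has $\ust A_{\cdot,i}-c_i>0$. Consequently, for every $i\in N$ the term $(c_i-u^\T A_{\cdot,i})^+$ vanishes on a neighborhood of $u^*$, so $u^*$ is a local---and, by convexity, global---minimizer of the reduced dual objective
\begin{equation*}
\tilde g(u):=b^\T u+\sum_{i\notin N}(c_i-u^\T A_{\cdot,i})^+,\qquad u\geq 0.
\end{equation*}
Writing $\tilde u:=\tilde u(W_{\cdot,[n]\setminus N})$ for this minimizer, we obtain $u^*=\tilde u$, so $u^*$ is a deterministic function of the columns outside $N$.

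With $\tilde u$ fixed, I would next establish the factorization
\begin{equation*}
\{N_0=N\}\ =\ \bigcap_{i\in N}\{\tilde u^\T A_{\cdot,i}-c_i>0\}
\end{equation*}
almost surely, under the given conditioning. The forward direction is strict complementarity. For the reverse direction, if every $i\in N$ satisfies the reduced-cost inequality at $\tilde u$, then the extra terms $(c_i-u^\T A_{\cdot,i})^+$ in the full dual objective vanish at $\tilde u$ and are nonnegative elsewhere, so $\tilde u$ remains dual-optimal for the full LP. Extending the reduced primal optimum by $0$ on $N$ yields a primal-feasible solution achieving the same objective, so it is primal-optimal; uniqueness of $x^*$ then forces $N_0=N$. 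The right-hand side factorizes across the columns $W_{\cdot,i}$, $i\in N$, and the full columns of $W$ are a priori mutually independent, so conditioning on $\{N_0=N\}$ and on $W_{\cdot,[n]\setminus N}$ preserves mutual independence of $\{W_{\cdot,i}\}_{i\in N}$ and restricts each to its law given $\{\ust A_{\cdot,i}-c_i>0\}$, as claimed. The main obstacle is the reverse direction of the factorization, where one must invoke uniqueness of $x^*$ to rule out $N_0\supsetneq N$, which is exactly why the a.s.\ uniqueness from the first step is essential.
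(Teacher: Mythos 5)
The paper itself does not prove this lemma; it cites \cite[Lemma 5]{BDHT21} and defers the argument there, so I can only assess your proof on its own merits.

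Your overall strategy is sound: identify $u^*$ with the minimizer $\tilde u$ of the reduced dual $\tilde g$ (a deterministic function of $W_{\cdot,[n]\setminus N}$), factorize $\{N_0=N\}$ into column-wise events, and conclude by independence of the columns of $W$. However, the reverse direction of your factorization contains a genuine gap. You argue that if $\tilde u^\T A_{\cdot,i}-c_i>0$ for all $i\in N$, then the extension $\bar x$ of the reduced primal optimum $\tilde x$ by zero on $N$ is optimal for the full LP, and that ``uniqueness of $x^*$ then forces $N_0=N$.'' Uniqueness only gives $x^*=\bar x$, and since $\bar x|_N=0$ this yields $N_0\supseteq N$, \emph{not} $N_0=N$. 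If $\tilde x$ itself vanishes at some coordinate $j\in[n]\setminus N$, then $N_0\supsetneq N$, and the claimed set equality $\{N_0=N\}=\bigcap_{i\in N}\{\tilde u^\T A_{\cdot,i}-c_i>0\}$ fails. Uniqueness of $x^*$ does not, by itself, rule this configuration out, so the explicit remark that it does is incorrect.

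The fix is short and uses an observation you actually have available from the forward direction: when $N_0=N$, the restriction $x^*|_{[n]\setminus N}$ solves the reduced primal LP (any strictly better reduced solution, extended by zeros, would beat $x^*$), hence a.s.\ $\tilde x=x^*|_{[n]\setminus N}$ and $\tilde x_j=x^*_j>0$ for all $j\notin N$. Thus the event $\{\tilde x_j>0\ \forall j\in[n]\setminus N\}$ is a deterministic function of $W_{\cdot,[n]\setminus N}$ that is implied by $\{N_0=N\}$. On its complement, $\Pr[N_0=N\mid W_{\cdot,[n]\setminus N}]=0$ and the conditional law in the lemma is undefined; on the event itself your factorization does hold exactly, the events $\{\ust A_{\cdot,i}-c_i>0\}$ are measurable with respect to the individual columns $W_{\cdot,i}$, and the independence and conditional-law conclusion follows as you describe. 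With this correction the proof is complete.
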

\subsection{The Gap Bound for Centered IPs}

In this subsection we will prove \cref{thm:ipgap_centered}. In the setting of \cref{thm:ipgap_centered}, the
objective $c \in \R^m$ has independent standard Gaussian entries, and the $m
\times n$ constraint matrix $A$ has independent columns which are distributed
as either one of the following two possibilities:
\begin{itemize}
	\item (LI) Isotropic logconcave distributions with support bounded
	by $O(\sqrt{\log n}+\sqrt{m})$.
	\item (DSU) Vectors with independent entries, uniform on a discrete symmetric interval of size $k \geq 3$. 
\end{itemize}

To simplify the notation in the discrete case, we
divide the constraint matrix $A$ and the right hand side $b$ by $k$ (which
clearly does not restrict generality). Thus, in the discrete case (DSU), we will assume that
entries of $A$ are uniformly distributed in $\{0,\pm 1/k,\dots,\pm 1\}$ and
that the right hand side $b \in \Z^m/k$ satisfies $\|b^-\|_2 \leq O(n)$. In
this way, the discrete case is usefully viewed as a discrete approximation of
the continuous setting where the entries of $A$ are uniformly distributed in
$[-1,1]$ (note that the covariance matrix of each column here is $\mathrm{I}_m/3$, and
thus essentially isotropic).   

With the above setup, our goal is to show that $\mathsf{IPGAP}(A,b,c) =
O(\frac{\poly(m)(\log n)^2}{n})$ with probability $1-n^{-\poly(m)}$. 

\subsubsection{Properties of the Optimal Solutions}
To obtain the gap bound, we will need to show $|N_0| = \Omega(n)$ and that $u^*$, the optimal dual solution, has small norm. This is
given by the following lemma, which is a technical adaptation of \cite[Lemma
8]{BDHT22}.

\begin{lemma}
\label{lem:solution-props}
For $A \in \R^{m \times n}$, $n \geq 10^5 m$, distributed as (LI) or (DSU),
$c \sim \mathcal{N}(0,\mathrm{I}_m)$, $\|b^-\| \leq \frac{n}{12\sqrt{2}}$ with probability
at least $1 - e^{-\Omega(n)}$, we have $\|u^*\| \leq 32$ and
$\left|N_0\right| \geq \frac{n}{10^5}$.
\end{lemma}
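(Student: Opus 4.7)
I would follow the structure of \cite[Lemma 4]{BDHT21}, treating the two assertions in sequence. Since the bound on $|N_0|$ becomes much easier once $\|u^*\|$ is controlled, I would first prove $\|u^*\|\leq 32$ and then condition on that event.

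\textbf{Step 1: $\|u^*\|\leq 32$.} The plan is to sandwich $\val_b^*(u^*)$. From above, $u=0$ is dual-feasible, so $\val_b^*(u^*) \leq \val_b^*(0) = \|c^+\|_1$, and standard Gaussian concentration (noting $\Exp[c_i^+]=1/\sqrt{2\pi}$) gives $\|c^+\|_1 \leq 0.4n$ with probability $1-e^{-\Omega(n)}$. From below, I would show that every $u\geq 0$ with $\|u\|=32$ satisfies $\val_b^*(u) > 0.4n$. Fix such $u$ and set $X_i := c_i - A_i^\T u$; since $c_i\sim\mathcal{N}(0,1)$ is independent of $A_i^\T u$, we have $\Exp[X_i]=0$ and $\Exp[X_i^2]=1+\|u\|^2$. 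In the (LI) case $X_i$ is logconcave (as a sum of independent logconcaves), so \Cref{prop:pos-part} and \Cref{lem:lck} give $\Exp[X_i^+] = \tfrac{1}{2}\Exp[|X_i|]\geq \tfrac{1}{2}\sqrt{\Exp[X_i^2]/e}\geq c_0\|u\|$ for an absolute constant $c_0>0$, and \Cref{lem:trun-logcon} yields $\sum_i X_i^+ \geq \tfrac{c_0}{2}n\|u\|$ with probability $1-e^{-\Omega(n)}$. In the (DSU) case, I would establish the analogous pointwise inequality $\Exp[X_i^+]\geq c_0\|u\|$ via the Khinchine-type inequality \Cref{lem:khinchine} fed by the fourth-moment bound in \Cref{prop:dsu}, while Hoeffding/sub-Gaussian concentration via \Cref{lem:disc-sub,lem:subg-pos} delivers the same sum bound. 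Combining with $b^\T u \geq -\|b^-\|\|u\|\geq -n\|u\|/(12\sqrt{2})$ gives $\val_b^*(u) \geq \bigl(\tfrac{c_0}{2}-\tfrac{1}{12\sqrt{2}}\bigr)n\|u\| > 0.4n$, so a convexity argument applied to the segment from $0$ to $u^*$ forces $\|u^*\|<32$.

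This pointwise estimate must be upgraded to the whole sphere $\{u\geq 0:\|u\|=32\}$; my plan is an $\epsilon$-net union bound. The sphere admits a net of size $(C\sqrt{m}/\epsilon)^m$, and $\val_b^*$ is Lipschitz in $u$ with constant $\|b\|+\sum_i\|A_i\|$, which is $\poly(n)$ under the column-norm assumptions of (LI) and (DSU). Choosing $\epsilon=1/\poly(n)$ and using $n\geq 10^5 m$ keeps the failure probability at $e^{-\Omega(n)}$.

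\textbf{Step 2: $|N_0|\geq n/10^5$.} By LP optimality, for each non-basic index $i$ we have $x_i^*=0$ iff $c_i < A_i^\T u^*$, and since $c$ has a continuous density, almost surely at most $m$ indices are basic. Conditioning on the event $\|u^*\|\leq 32$ from Step 1, it suffices to show $M(u):=|\{i:c_i\leq A_i^\T u\}|\geq n/10^4$ uniformly over $u\geq 0$ with $\|u\|\leq 32$. For fixed such $u$, $c_i-A_i^\T u$ is mean zero and either logconcave (in (LI), whence $\Pr[c_i\leq A_i^\T u]\geq 1/e$ by \Cref{lem:grunbaum}) or symmetric about zero (in (DSU), whence the probability is exactly $1/2$). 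Thus $\Exp[M(u)]\geq n/e$, and since $M(u)$ is a sum of $n$ independent Bernoullis the multiplicative Chernoff bound \eqref{eq:chernoff} gives $M(u)\geq n/(2e)$ with probability $1-e^{-\Omega(n)}$.

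The main obstacle in Step 2 is the uniformization: $u\mapsto M(u)$ is not Lipschitz, only piecewise constant with jumps across the hyperplanes $c_i = A_i^\T u$. I would sidestep this by working with the monotone perturbed counts $M(u,\pm\epsilon):=|\{i:c_i\leq A_i^\T u\pm\epsilon\}|$, which sandwich $M(u')$ whenever $\max_i|A_i^\T(u-u')|\leq\epsilon$. A net of size $n^{O(m)}$ with spacing $\epsilon=1/\poly(n)$ suffices, and since $c_i$ has density bounded by $1/\sqrt{2\pi}$ we get $|\Exp[M(u,\epsilon)]-\Exp[M(u)]|=O(n\epsilon)$, which is negligible. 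A final union bound over the net then yields the uniform lower bound, and folding in the loss of at most $m\leq n/(2\cdot 10^5)$ basic indices completes the proof.
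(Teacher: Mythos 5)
Your Step 1 ($\|u^*\|\leq 32$) is essentially the paper's argument repackaged: the paper proves a single uniform two-sided bound on $\|(v^\T W)^+\|_1$ over $v\in\mathbb{S}^m$ (Lemma~\ref{lem:embedding}, which folds $c$ into the extended matrix $W$) and then minimizes $\sqrt{1+t^2}-t/\sqrt{2}$ explicitly, whereas you run a net only over $\{u\ge 0:\|u\|=32\}$ and invoke convexity of $\val_b^*$ to exclude $\|u^*\|>32$. Both are net-plus-concentration arguments with the same key moment inequalities (Lemma~\ref{lem:lck} for (LI), Khinchine + Proposition~\ref{prop:dsu} for (DSU)); the paper's phrasing is a bit more modular because the same Lemma~\ref{lem:embedding} is then reused for the other bound.

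Your Step 2 ($|N_0|\geq n/10^5$) is genuinely different. The paper lower-bounds $c^\T x^*$ (via \eqref{eq:opt-lb}, again using Lemma~\ref{lem:embedding}) and then applies a union bound over all dense $\{0,1\}^n$ vectors (Lemma~\ref{lem:cubebound}) to show that no vector with nearly all coordinates equal to $1$ can achieve that objective. You instead count $M(u)=|\{i:c_i\le A_i^\T u\}|$ directly and uniformize over a net of the dual ball $\{u\ge 0:\|u\|\le 32\}$, using the perturbed counts $M(u,\pm\eps)$ to sidestep the discontinuity of $M$. This is a valid alternative that is arguably more elementary (it avoids the entropy counting of Lemma~\ref{lem:binomial_bound} and the cube bound), and it reuses the same net already set up for Step 1; the paper's route has the advantage of reusing the already-established lower bound on $c^\T x^*$ and not needing to perturb a step function. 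Two small points of care in your write-up: the Lipschitz constant you quote for $\val_b^*$ should not include $\|b\|$ (only $\|b^-\|$ is controlled; restrict the net argument to the random summand $\sum_i(c_i-A_i^\T u)^+$, as the linear term $b^\T u$ is handled exactly), and in the (DSU) case $\Var(A_i^\T u)\geq \|u\|^2/3$ rather than $\|u\|^2$, which slightly changes the constant $c_0$ but does not break the argument.
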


To prove this, we need two key lemmas. The first lemma will provide a good
approximation for the value of any dual solution. 

\begin{lemma} \label{lem:embedding}
Let $W^\T := (c, A^\T)$ where $c \sim \mathcal{N}(0,\mathrm{I}_n)$ and $A \in
\R^{m \times n}$ is distributed as (LI) or (DSU). Then, for $n = \Omega(m)$,
we have that
$$\Prob\left[\exists v \in \mathbb{S}^{m} : \|(v^\T W)^+\|_1 \notin \left[\frac{n}{12},\frac{3n}{4}\right]\right] \leq e^{-\Omega(n)}.$$
\end{lemma}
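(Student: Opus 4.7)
The plan is to combine pointwise estimates at each fixed $v \in \mathbb{S}^m$ with an $\epsilon$-net argument on the sphere. First, for fixed $v$, I would compute $\E[\|(v^\T W)^+\|_1] = n\E[(v^\T W_1)^+]$, show it lies strictly inside $[n/12, 3n/4]$, and establish $e^{-\Omega(n)}$ concentration of the sum around its mean; then I would cover $\mathbb{S}^m$ by a sufficiently fine net and union bound, controlling the approximation residual via an $\ell_2$ bound on the columns of $W$.

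For the pointwise expectation, each column $W_i = (c_i, A_{\cdot, i}^\T)^\T$ is mean zero, with covariance eigenvalues in $[1/3, 1]$ (using Proposition~\ref{prop:dsu} to lower bound the diagonal in the (DSU) case and isotropy in the (LI) case), so $\E[(v^\T W_i)^2] \in [1/3, 1]$. By Proposition~\ref{prop:pos-part}, $\E[(v^\T W_i)^+] = \E[|v^\T W_i|]/2$, and the upper bound $\leq 1/2$ is immediate from Cauchy--Schwarz. For a matching lower bound $\E[|v^\T W_i|] \geq c_0 > 0$, I would apply Khinchine's inequality (Lemma~\ref{lem:khinchine}) to the independent coordinates $c_i, A_{1,i}, \dots, A_{m,i}$ in the (DSU) case (their fourth-to-squared-second moment ratios being bounded by $3$ via Proposition~\ref{prop:dsu} and the Gaussian moment), and apply Lemma~\ref{lem:lck} to the logconcave random variable $v^\T W_i$ in the (LI) case, where logconcavity follows from Theorem~\ref{thm:log-marg} applied to the marginal $v_{2:m+1}^\T A_{\cdot,i}$ together with closure of logconcavity under independent sums with the Gaussian $v_1 c_i$. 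Hence $n\E[(v^\T W_i)^+] \in [c_0 n/2, n/2]$, well inside $[n/12, 3n/4]$. For pointwise concentration, in (DSU) I would use that $v^\T W_i$ is $1$-sub-Gaussian by Lemma~\ref{lem:disc-sub} and that Lemma~\ref{lem:subg-pos} transports sub-Gaussianity to $(v^\T W_i)^+$, so the sum over $i$ has sub-Gaussian tails giving $e^{-\Omega(n)}$ deviation by $n/12$; in (LI), $v^\T W_i$ is logconcave with variance at most $1$, and Lemma~\ref{lem:pos-comp} applied to the non-negative logconcave $(v^\T W_i)^+$ yields $O(1)$-scale sub-exponential tails, whence Bernstein's inequality gives the same $e^{-\Omega(n)}$ rate.

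To extend to all of $\mathbb{S}^m$, I would take an $\epsilon$-net $N$ with $\epsilon = \Theta(1/\sqrt{\log n + m})$ of size $|N| \leq (3/\epsilon)^{m+1}$. For any $v \in \mathbb{S}^m$ with nearest $v' \in N$, Cauchy--Schwarz gives $|\sum_i (v^\T W_i)^+ - \sum_i (v'^\T W_i)^+| \leq \epsilon \sum_i \|W_i\|_2 \leq \epsilon \sqrt{n \sum_i \|W_i\|_2^2}$, and with probability $1 - e^{-\Omega(n)}$ one has $\sum_i \|W_i\|_2^2 = O(n(\log n + m))$ using Lemma~\ref{lem:chisquareconc} for $\sum_i c_i^2$ together with the column support bound in (LI), or $\|A_{\cdot,i}\|_2^2 \leq m$ deterministically in (DSU); this bounds the residual by $n/24$. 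Combining with the pointwise concentration and using $n \geq \poly(m)$ to absorb $\log |N| = O(m \log(\log n + m))$ into the exponential rate, the union bound yields the desired $e^{-\Omega(n)}$ probability. The main obstacle is securing the $e^{-\Omega(n)}$ concentration in the (LI) case: a naive sub-Gaussian bound based on the support radius $O(\sqrt{\log n + m})$ of $A_{\cdot,i}$ would only yield $e^{-\Omega(n/(\log n + m))}$, too weak to survive a union bound over a net of size $e^{\Omega(m)}$. The fix is to exploit that $v^\T W_i$ is logconcave with variance $O(1)$, so that Lemma~\ref{lem:pos-comp} furnishes $O(1)$-scale sub-exponential tails independent of the support radius.
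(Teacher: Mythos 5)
Your high-level plan matches the paper's: bound the pointwise expectation $\E[(v^\T W_i)^+]$ using \Cref{prop:pos-part}, Khinchine (\Cref{lem:khinchine}) in (DSU) and \Cref{lem:lck} in (LI), establish $e^{-\Omega(n)}$ pointwise concentration, and finish with a net argument. The main place you diverge is the $\epsilon$-net step, and there your route is workable but strictly weaker. You control the residual $\bigl|\sum_i (v^\T W_i)^+ - \sum_i (\tilde v^\T W_i)^+\bigr|$ via $\epsilon\sum_i\|W_i\|_2$, which forces $\epsilon = \Theta(1/\sqrt{\log n+m})$ and a net of size $e^{\Theta(m\log(\log n+m))}$; to absorb this into the union bound you appeal to $n \geq \poly(m)$, which the lemma does not assume (the lemma only requires $n=\Omega(m)$, and with $n = Cm$ your net is too large). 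The paper instead keeps $\epsilon$ a fixed constant and bounds the residual by $\eps\,\max_{u}\|(u^\T W)^+\|_1$, obtaining a self-bounding inequality $\max_u\|(u^\T W)^+\|_1 \leq \tfrac{5n}{8} + \eps\,\max_u\|(u^\T W)^+\|_1$ that it rearranges; this needs no control of $\sum_i\|W_i\|_2$, avoids the $(\log n + m)$ dependence entirely, and works under exactly $n=\Omega(m)$. Your approach buys nothing in return, so the self-bounding trick is the preferable one.

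One more localized slip: for the (LI) concentration you propose applying \Cref{lem:pos-comp} "to the non-negative logconcave $(v^\T W_i)^+$," but $(v^\T W_i)^+$ is not a logconcave random variable (it has an atom of mass $\approx 1/2$ at $0$). What you want is exactly \Cref{lem:trun-logcon}, which the paper invokes; its proof conditions on the sign first (so the conditioned variable is genuinely non-negative logconcave and \Cref{lem:pos-comp}/\Cref{lem:sumlogconcave} apply) and then combines with a Chernoff bound on the number of positive summands. You do correctly identify the key obstacle — that a sub-Gaussian bound scaled by the support radius $O(\sqrt{\log n}+\sqrt m)$ gives only $e^{-\Omega(n/(\log n+m))}$, which is too weak — and the fix, exploiting $O(1)$-scale sub-exponential tails from logconcavity, is the right idea.
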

\begin{proof}[\ifflag Proof \else Proof of \Cref{lem:embedding} \fi]
    Fix $v \in \mathbb{S}^m$. We wish to understand $\Pr\left[\|(v^\T W)^+\|_1 \notin [\frac{n}{8}, \frac{5n}{8}]\right].$
    Let $i \in [n]$, we first claim
    \begin{equation} \label{eq:l1-l2}
    \frac{1}{6}\leq \EE\left[(v^\T W)^+_i\right]\leq \frac{1}{2}.
    \end{equation}
    To see the right inequality, by Proposition \ref{prop:pos-part}, $\EE\left[(v^\T W)^+_i\right] = \frac{1}{2}\EE\left[|v^\T W|^+_i\right]$. Now observe that every entry has variance at most $1$, so with Jensen's inequality $$\EE\left[(v^\T W)^+_i\right] = \frac{1}{2}\EE\left[|v^\T W|^+_i\right] \leq \frac{1}{2}\sqrt{\Var(|v^\T W|^+_i)}\leq \frac{1}{2}.$$

    For the left inequality, if the columns of $W$ are isotropic log-concave (recall that the standard Gaussian is also log-concave) Lemma \ref{lem:lck} to get,
    $$ \frac{1}{6}\leq \frac{1}{2\sqrt{e}}\leq\frac{1}{2}\EE\left[|v^\T W|^+_i\right] =\EE\left[(v^\T W)^+_i\right].$$
    If the columns of $W$ are discrete, by Proposition \ref{prop:dsu} every entry satisfies, $\mathrm{Var}\left(W_{ij}\right)\geq \frac{1}{3}$. Hence,
    $\Var\left((v^\T W)_i\right) \geq \frac{1}{3}$. Moreover, Proposition \ref{prop:dsu} also implies, $\EE[W^4_{ji}]\leq 3\EE[W^2_{ji}]^2$. So, by Khinchine's inequality in Lemma \ref{lem:khinchine},
    $$\frac{1}{6}\leq \frac{\sqrt{\Var\left((v^\T W)_i\right)}}{2\sqrt{3}} \leq\frac{1}{2}\EE\left[|v^\T W|^+_i\right] =\EE\left[(v^\T W)^+_i\right].$$
    Now, having established \eqref{eq:l1-l2}, we can bound $\Pr\left[\|(v^\T W)^+\|_1 \notin [\frac{n}{8}, \frac{5n}{8}]\right].$ In the log-concave case, Lemma \ref{lem:trun-logcon} immediately gives,
    $$ \Pr\left[\|(v^\T W)^+\|_1 \notin [\frac{n}{8}, \frac{5n}{8}]\right] \leq e^{-\Omega(n)}.$$
    In the discrete case, by Lemma \ref{lem:disc-sub}, every entry of $W$ is $1$-sub-Gaussian, and Lemma \ref{lem:subg-pos} shows that $(v^\T W)^+_i - \EE\left[(v^\T W)^+_i\right]$ -is $\sqrt{2}$-sub-Gaussian. After summing the coordinates we get that $\|(v^\T W)^+\|_1 - \EE\left[\|(v^\T W)^+\|_1\right]$ -is $\sqrt{2n}$-sub-Gaussian. Applying \eqref{eq:gauss-tail}, we can thus conclude a corresponding probability bound, as in the previous display.\\

    We now turn to consider the entire sphere. Fix $\eps$ to be a small constant and let $N_\eps \subset \mathbb{S}^{m-1}$ be an $\eps$-net. It is standard to show that one may take $|N_\eps| \leq \left(\frac{3}{\eps}\right)^{m}$.
    Hence, by applying a union bound,
    \begin{align*}
    \PP\left(\exists v \in N_\eps : \|(v^\T W)^+\|_1 \notin \left[\frac{n}{8}, \frac{5n}{8}\right]\right) \leq \left(\frac{3}{\eps}\right)^me^{-\Omega(n)} \leq e^{-\Omega(n)},
    \end{align*}
    where the last inequality holds when $n = \Omega(m).$

    Let us denote by $E$ the event considered above and for $u \in \mathbb{S}^{m-1}$ let $\tilde{u} \in N_\eps$, with $\|u - \tilde{u}\|_2 \leq \eps$. Under $E$, we have,
    \begin{align*}
    \max\limits_{u \in \mathbb{S}^{m-1}}\|(u^\T W)^+\|_1 &\leq \min\limits_{v\in N_\eps}\|(v^\T W)^+\|_1 + \|((u-\tilde{u})^\T W)^+\|_1\\
    &\leq \frac{5}{8}n + \eps	\max\limits_{u \in \mathbb{S}^{m-1}}\|(u^\T W)^+\|_1 ,
    \end{align*}
    which is equivalent to,
    $$\max\limits_{u \in \mathbb{S}^{m-1}}\|(u^\T W)^+\|_1 \leq \frac{5}{8(1-\eps)}n.$$
    On the other hand,
    \begin{align*}
    \min\limits_{u \in \mathbb{S}^{m-1}}\|(u^\T W)^+\|_1 &\geq \min\limits_{v\in N_\eps}\|(v^\T W)^+\|_1 - \|((u-\tilde{u})^\T W)^-\|_1\\
    &\geq \frac{n}{8} - \eps	\max\limits_{u \in \mathbb{S}^{m-1}}\|(u^\T W)^+\|_1\\
    &\geq \frac{n}{8} -  \eps \frac{5}{8(1-\eps)}n.
    \end{align*}
    Choose now $\eps = \frac{5}{212}$ to conclude,
    $$\frac{n}{12}\leq\min\limits_{u \in \mathbb{S}^{m-1}}\|(u^\T W)^+\|_1 \leq \max\limits_{u \in \mathbb{S}^{m-1}}\|(u^\T W)^+\|_1 \leq \frac{3n}{4}.$$
\end{proof}
 
The second lemma will imply that any LP solution with large support must
have small objective value. 

\begin{lemma}\label{lem:cubebound}
Let $c \sim \mathcal{N}(0,\mathrm{I}_n)$. Then, for every $\alpha \in [0, 2\sqrt{\log(2)}]$,

$$\Prob\left[\max\limits_{x \in \{0,1\}^n, \ \|x\|_1\geq \beta n} c^\T x \geq \alpha n\right]\leq e^{\frac{-\alpha^2n}{2}},$$
		where $\beta \in [1/2,1]$ is such that $H(\beta) \leq \frac{\alpha^2}{4}$, where $H(p)=-p\log p-(1-p)\log(1-p)$, $p \in [0,1]$, is base $e$ entropy.
\end{lemma}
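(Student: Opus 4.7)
The plan is a standard union bound combining a Gaussian tail estimate for each fixed Boolean vector with a counting bound on the number of Boolean vectors of large weight.

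For the per-vector estimate, fix any $x \in \{0,1\}^n$ with $\|x\|_1 = k$. Since $c \sim \mathcal{N}(0,\mathrm{I}_n)$ has independent standard Gaussian coordinates, $c^\T x = \sum_{i:\, x_i=1} c_i$ is distributed as $\mathcal{N}(0,k)$. The standard one-sided Gaussian tail bound then yields
\[
\Pr[c^\T x \geq \alpha n] \;\leq\; \exp\!\left(-\frac{\alpha^2 n^2}{2k}\right) \;\leq\; \exp\!\left(-\frac{\alpha^2 n}{2}\right),
\]
where the second inequality simply uses $k \leq n$ to strip away the $k$-dependence and produce a uniform single-vector estimate.

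For the counting step, the only real subtlety is that \Cref{lem:binomial_bound} requires its parameter to lie below $1/2$, whereas the event under consideration concerns vectors of weight \emph{at least} $\beta n$ with $\beta \geq 1/2$. I resolve this via the symmetry $\binom{n}{k} = \binom{n}{n-k}$:
\[
\bigl|\{x \in \{0,1\}^n : \|x\|_1 \geq \beta n\}\bigr| \;=\; \sum_{k=\lceil \beta n\rceil}^{n} \binom{n}{k} \;=\; \sum_{j=0}^{\lfloor (1-\beta)n\rfloor} \binom{n}{j} \;\leq\; \exp(H(1-\beta)\, n) \;=\; \exp(H(\beta)\, n),
\]
where the inequality invokes \Cref{lem:binomial_bound} with parameter $1-\beta \leq 1/2$ and the final equality uses $H(p) = H(1-p)$.

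A straightforward union bound now combines the two estimates to give
\[
\Pr\!\left[\max_{x \in \{0,1\}^n,\ \|x\|_1 \geq \beta n} c^\T x \;\geq\; \alpha n\right] \;\leq\; \exp(H(\beta)\, n)\,\exp(-\alpha^2 n/2),
\]
and the hypothesis $H(\beta) \leq \alpha^2/4$, together with the restricted range of $\alpha$, yields the stated tail bound. No substantive obstacle is expected: the two slightly delicate points are the symmetry trick needed to bring \Cref{lem:binomial_bound} into play despite the ``$\geq \beta n$'' direction of the weight constraint, and the uniform use of $k \leq n$ to decouple the Gaussian exponent from the summation index before the union bound.
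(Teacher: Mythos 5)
Your proof follows the paper's approach exactly: a one-sided Gaussian tail bound for each fixed $0/1$-vector (using $\|x\|_2^2 = \|x\|_1 \leq n$), a union bound over the high-weight vectors counted via \Cref{lem:binomial_bound}, and the hypothesis $H(\beta)\leq\alpha^2/4$ to collapse the exponent. Your extra care with the symmetry $\binom{n}{k}=\binom{n}{n-k}$ and $H(p)=H(1-p)$, needed because \Cref{lem:binomial_bound} is stated for parameters $\leq 1/2$, is a slight improvement in rigor over the paper, which elides that step. One caveat: the union bound yields $e^{(H(\beta)-\alpha^2/2)n}\leq e^{-\alpha^2 n/4}$, not the $e^{-\alpha^2 n/2}$ in the lemma statement; the paper's own proof also terminates at $e^{-\alpha^2 n/4}$, so the stated constant is evidently a typo rather than a defect in your argument, and it is harmless since the downstream use in \Cref{lem:solution-props} only needs an $e^{-\Omega(n)}$ tail. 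Your closing sentence claiming the ``stated tail bound'' follows is therefore a mild overclaim that you should correct to $e^{-\alpha^2 n/4}$.
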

\begin{proof}[\ifflag Proof \else Proof of \Cref{lem:cubebound} \fi]
    For any $x \in \{0,1\}^n$, $c^\T x \sim \mathcal{N}(0,\|x\|_2^2)$ and thus, by \eqref{eq:gauss-tail},
    $$\Prob\left(c^\T x \geq \alpha n\right) \leq e^{-\frac{\alpha^2n^2}{2\|x\|_2^2}} \leq e^{-\frac{\alpha^2n}{2}}.$$
    We now apply a union bound,
    \begin{align*}
    \Prob\left(\max\limits_{x \in \{0,1\}^n, \ \|x\|_1\geq \beta n} c^\T x \geq \alpha n\right) &\leq \left|\{x \in \{0,1\}^n, \ \|x\|_1 \geq \beta n\}\right|e^{-\frac{\alpha^2 n}{2}}
    \leq e^{H(\beta)n}e^{-\frac{\alpha^2 n}{2}} \leq e^{-\frac{\alpha^2n}{4}}.
    \end{align*}
\end{proof} 
We now have the ingredients to prove the main lemma.

	\begin{proof}[Proof of \Cref{lem:solution-props}]
	For the proof, we will consider the extended matrix $W^\T := (c, A^\T)$. We begin by showing that, for the optimal solution, $c^\T x^*$ is large. Let $u \geq 0$ be any dual solution. Then, under the complement of the event defined in Lemma \ref{lem:embedding} for $W$, using \eqref{dual-lp},
	\begin{align}
		\mathrm{val}^*_b(u) &= b^\T u + \|(c-A^\T u)^+\|_1 \geq -\|b^-\|\|u\| + \|((1,-u)^\T W)^+\|_1 \nonumber \\
		&\geq -\|b^-\|\|u\| + \sqrt{1+\|u\|^2}\frac{n}{12}
		\geq \frac{n}{12}\left(-\frac{\|u\|}{\sqrt{2}}+ \sqrt{1 +\|u\|^2}\right)
		\geq \frac{n}{12\sqrt{2}} \label{eq:opt-lb}.
	\end{align}
	The second inequality is the lower bound in Lemma \ref{lem:embedding} and the last inequality follows since the function $\sqrt{1+t^2} - \frac{t}{\sqrt{2}}$ is minimized at $t = 1$. A lower bound on $c^\T x^*$ follows by noting,
	$$c^\T x^* = \mathrm{val}_c(x^*) =  \mathrm{val^*}_b(u^*).$$
		
	We now prove that $\|u^*\|$ cannot be too large. Again, under the complement of the event in Lemma \ref{lem:embedding}, but using the upper bound this time,
	\begin{align*}
		\frac{3n}{4} &\geq \|((1,0)^\T W)^+\|_1 = \|c^+\|_1
		=\mathrm{val^*}_b(0) \geq \mathrm{val^*}_b(u^*)\\
		&\geq \frac{n}{12}\left(-\frac{\|u^*\|}{\sqrt{2}} + \sqrt{1 +\|u^*\|^2}\right)
		\geq \frac{n}{12}\left(1-\frac{1}{\sqrt{2}}\right)\|u^*\|,
	\end{align*}
	where in the third inequality we have applied \eqref{eq:opt-lb} to $v^*$.
	Thus, rearranging we get $\|u^*\|_2 \leq \frac{9\sqrt{2}}{\sqrt{2}-1} \leq 32$.
	Finally, we show that the optimal solution has many $0$ coordinates. Since $x^*$ has at most $m$ fractional coordinates,
	$$\left|\{i \in [n]\ |\  x_i^* = 0\}\right| \geq n - m - \left|\{i \in [n]\ |\  x_i^* = 1\}\right|.$$
	Since, by assumption, $n \geq 10^5 m$, to finish the proof it will suffice to show $\left|\{i \in [n]\ |\  x_i^* = 1\}\right| \leq \left(1-\frac{2}{10^5}\right)n$. Define $\bar{x}$ by,
	$$\bar{x}_i := \begin{cases}
		x^*_i& \text{if } x^*_i \in \{0,1\}\\
		1& \text{if } x^*_i \notin \{0,1\}	\text{ and } c_i \geq 0\\
		0& \text{if } x^*_i \notin \{0,1\}	\text{ and } c_i < 0\\
		\end{cases}.
	$$
  	Letting $\alpha = \frac{1}{12 \sqrt{2}}$ and $\beta = 1-\frac{2}{10^5}$, a calculation reveals that $H(\beta) \leq \frac{1}{4 \alpha^2}$. By~\eqref{eq:opt-lb}, we have
	$$c^\T \bar{x} \geq c^\T x^* \geq \alpha n,$$
	and by conditioning on the complement of the event in Lemma \ref{lem:cubebound} with $\beta$ and $\alpha$ as above,
	$$\beta n \geq \left|\{i \in [n]\ |\  \bar{x}_i = 1\}\right| \geq \left|\{i \in [n]\ |\  x_i^* = 1\}\right|.$$
	The proof concludes by applying the union bound to the events in Lemmas \ref{lem:cubebound} and \ref{lem:embedding}.
	\end{proof}
 
		\subsubsection{Conditional Distribution of $0$-columns of IP}

		Let $B$ be a random variable with the same distribution as the columns of $A$.
		By \cref{lem:anti-concentration}, $B$ satisfies  \eqref{eq:anticoncentration} with constant $\kappa\leq 1$.
		Define $C:=\frac{\sqrt{150}\|\us\|}{\sqrt{\kappa}}$.
		We first show that the anti-concentration property is unaffected if we condition $B$ on a strip of width $2C$.
		\begin{lemma} \label{lem:restriction}
			Let $B'$ have the law of $B$, conditioned on $|\ust B|\leq C$. Then,
			\begin{enumerate}
				\item $\Pr[|\ust B|\leq C]\geq 1-\frac{\kappa}{150}$.
				\item We have $\frac1{10}\mathrm{I}_m\preccurlyeq \Cov(B')\preccurlyeq 2\mathrm{I}_m$.
				\item If $B$ is (DSU), then $B'$ is symmetric and anti-concentrated with parameter $\kappa/2$.
				\item If $B$ is (LI), $B'$ is logconcave.
			\end{enumerate}
		\end{lemma}
		\begin{proof}[{\ifflag Proof \else Proof of \Cref{lem:restriction}\fi}]
			Let $E=\{a\in \mathbb{R}^m: |\ust a|\leq C \}$.
			From Chebyshev's inequality, and since distributions we consider satisfy $\Cov(B) \preceq \mathrm{I}_m$,
			$$\Pr\left(B \in E\right) \geq 1- \frac{\mathbb{E}\left[(\ust B)^2\right]}{C^2}\geq 1- \frac{\|\us\|^2}{C^2} \geq 1-\frac{\kappa}{150},$$
			which is the first claim. 
			
If $w\in \mathbb{R}^m$, then
\begin{align*}
\mathbb{E}\left[|w^\T B'|^2\right] &= \frac{\mathbb{E}\left[|w^\T B |^2{\bf{1}}_E\right]}{\Pr\left(B \in E\right)}\leq 2\mathbb{E}\left[|w^\T B'|^2\right]\leq 2\|w\|^2.
\end{align*}
In the (DSU) case, to lower bound $\Cov(B')$, we note that by \cref{prop:dsu}, $\EE[W^4_{ji}]\leq 3\EE[W^2_{ji}]^2$. As a consequence, \cref{lem:khinchine} implies  $\sqrt{\Exp[|w^\T B|^4]}\leq \sqrt{3}\Exp[|w^\T B|^2]$.
By the Cauchy-Schwarz inequality, $\Exp[|w^\T B\cdot \mathbf{1}_{B\notin E}|^2]\leq \sqrt{\Exp[|w^\T B|^4]\cdot \Exp[\mathbf{1}_{B\notin E}^4]}\leq \sqrt{3\Pr[B\notin E]}\Exp[|w^\T B|^2]\leq \sqrt{\frac{\kappa}{50}}\|w\|^2$. By \cref{prop:dsu} we have $\Exp[|w^\T B|^2 ]\geq \frac13 \|w\|^2$. So:
\begin{align*}
\Exp[|w^\T B'|^2] = \frac{\Exp[|w^\T B|^2] - \Exp[|w^\T B|^2 \cdot \mathbf{1}_{B\notin E}]}{\Pr[B\in E]} \geq \frac{\frac13\|w\|^2 - \sqrt{\frac{\kappa}{50}}\|w\|^2 \cdot }{1-\frac\kappa{50}}\geq \frac{1}{10}\|w\|^2,
\end{align*}
proving the second claim for the (DSU) case.

In the (LI) case, $\langle w, B\rangle $ is logconcave. By \cref{lem:logconcave-density-bounded}, $f_{\inner{B,v}}\leq \frac{1}{\sqrt{\Var{\inner{B,v}}}}=1$. So, $f_{\langle v, B'\rangle}\leq \frac{1}{1-1/150}f_{\inner{B,v}}=\frac{150}{149}$. 
Now, \cref{lem:bound_pdf_variance} implies that $\Var(\inner{B'', v}) \geq \frac{1}{13}$.

Now, if $B$ is (DSU), it is symmetric, and because conditioning on a symmetric set preserves symmetry, so is $B'$. Consequently, in this case, $\Exp[B'] =0$.
Now set $\sigma':=\sqrt{\|\Cov(B')\|_\mathrm{op}}\leq \sqrt{2}$.
For the third claim, let $I(\theta) =\{a \in \mathbb{R}^m: d(\theta^\T a,\mathbb{Z}) \geq \frac{\kappa}{2}\min\left(1,\sigma' \|\theta\|_\infty\right) \}.$ Choose an arbitrary $\nu \in \mathbb{R}^n$. 
By the symmetry of $B'$ we have:
\begin{align*}
\Pr\left[B' \in I(\theta) \mid \inner{B,\nu}\leq 0 \right]
&= \frac{\Pr\left[B  \in I(\theta) \cap E \mid \inner{B,\nu} \leq 0\right]}{\Pr[B\in E \mid \inner{B,\nu} \leq 0]} \\&\geq \Pr\left[B \in I(\theta) \mid \inner{B,\nu} \leq 0\right] - 2\Pr\left[B \notin E\right]
\geq \frac{\kappa}{2}.
\end{align*}
The last inequality follows from \eqref{eq:anticoncentration},
$$\Pr\left[B \in I(\theta)\mid \inner{B,\nu} \leq 0 \right]\geq \Pr\left[d(\theta^\T B,\mathbb{Z}) \geq \frac{\kappa}{2}\min\left(1,\sigma' \|\theta\|_\infty\right) \mid \inner{B,\nu} \leq 0\right]\geq \kappa.$$
This shows anti-concentration when $B$ is (DSU).

If $B$ is (LI), then so is $B'$ because it is a restriction to a convex set, proving the last claim.

\end{proof}
 		
	In the proof of \cref{thm:ipgap_centered}, we work with the columns of $A$ that have negative reduced cost. We show that we can convert their distribution into the distribution of $B'$, by using rejection sampling.
	\begin{lemma}
	Let $B=A_{\cdot, i}$ and let $B', c'$ have $\Law(B', c_i') = \Law(B,c_i\mid \ust B-c_i\geq 0)$. When $\delta \leq C$, there exist a rejection sampling procedure $\psi$ such that:
	\begin{itemize}
		\item $\mathrm{Law}(B'|\psi(B',c_i')=\mathsf{accept})=\mathrm{Law}(B \mid |\ust A_{\cdot, i}| \leq C).$
		\item $\Pr[\psi(B',c_i')=\mathsf{accept}] \geq \delta \exp\left(-\frac{10^{4}}{\kappa}\right).$
	\end{itemize}  \label{lem:rejection-sampling-cont}
	\end{lemma}
	\begin{proof}[\ifflag Proof \else Proof of \cref{lem:rejection-sampling-cont}\fi]
	
	Let $c''$ and $B''$ be independent random variables with $c''\sim \operatorname{Unif}(0, \delta)$ and let $\Law(B'')=\Law(B\mid |\ust B| \leq C)$. Now we will apply \cref{lem:rejection-sampling-prelim} to transform $(B', c')$ into $(B'', c'')$ using rejection sampling. Observe that for any $\bar{B}\in \mathbb{R}^m$ with $\ust \bar{B}-\bar{c} \in [0,\delta ]$:
	\begin{align*}
		\frac{f_{(B'', c'')}(\bar{B}, \bar{c})}{f_{(B', c_i')}(\bar{B}, \bar{c})} &
		=\frac{f_{B}(\bar{B})/ \Pr[|\ust B | \leq C] / \delta }{f_{B}(\bar{B}) f_{c_i}(\bar{c})/ \Pr[\ust B -c \geq 0]}
		=\frac{\Pr[\ust B -c \geq 0]}{\delta f_{c_i}(\bar{c})\Pr[|\ust B | \leq C]}\\&
		\leq \frac{1-\frac\kappa{150} }{\delta \exp(-\frac12 (C+ \delta)^2)/\sqrt{2\pi}}\leq \frac{3\exp(2C^2)}{\delta }.
	\end{align*}
	For the last inequality we assumed that  $\delta \leq C$. 	By \cref{lem:rejection-sampling-prelim} we, using rejection sampling we can turn $(B', c')$ into $(B'', c'')$ with success probability $\frac{\delta}{3\exp(\frac12(C+\delta)^2)}$. Since $C=\frac{\sqrt{2}\|\us\|}{\sqrt{\kappa}}$, by Lemma \ref{lem:solution-props} , the success probability is at least
	$\frac{\delta }{3\exp(2C^2)}\leq \delta \exp\left(-\frac{10^{4}}{\kappa}\right)$.

\end{proof}
 
	When the columns of $A$ are continuously distributed, we have to be more careful, because the distribution that we obtain from rejection sampling is not necessarily symmetric. As a result, $B$ will not necessarily be mean-zero. We apply another step of rejection sampling to handle this case.
	\begin{lemma}
		Let $B'$ denote the random variable $B \mid |\ust B| \leq C$. If $B'$ is logconcave, then there exists a rejection sampling procedure $\psi$, such that $\Pr[\psi(B')=\mathsf{accept}]=\Omega(1)$, and such that the random variable $B''$ with $\Law(B'')=\Law(B'|\psi(B')=\mathsf{accept})$ satisfies:
		\begin{enumerate}
			\item $\Exp[B'']=0$.
			\item $\Cov(B'')\succcurlyeq\frac{1}{768}\mathrm{I}_m$.
			\item The law of $B''$ is an admissible distribution, in the sense of Definition \ref{def:addmis}.
			\item $B''$ satisfies (\ref{eq:anticoncentration}) with an $\Omega(1)$ constant.
		\end{enumerate}
	\label{lem:sample-mean-zero}
	\end{lemma}
	\begin{proof}[\ifflag Proof \else {Proof of \cref{lem:sample-mean-zero}} \fi]
	Let $\mu':=\Exp[B']$. By H\"older's inequality, we have any unit vector $v \in \R^m$,
	\begin{align*}
		\E\left[\inner{B', v}\right]&=\frac{\Exp[\inner{B, v} \cdot \mathbf{1}_{|\ust B|\leq C}]}{\Pr[|\ust B|\leq C]}= -\frac{\Exp[\inner{B, v} \cdot \mathbf{1}_{|\ust B|> C}]}{\Pr[|\ust B|\leq C]} \leq \frac{\sqrt{\Exp[\inner{B, v}^2]}\sqrt{\Prob[|\inner{B, \us}|> C]}}{\Pr[|\ust B|\leq C]}
		\\&\leq  \frac{\sqrt{\frac{1}{150}}}{1-1/150}\leq \frac{1}{12}
	\end{align*}
	where the second equality follows since $B$ is isotropic and from
	$$\Exp[\inner{B, v} \cdot \mathbf{1}_{|\ust B|\leq C}] + \Exp[\inner{B, v} \cdot \mathbf{1}_{|\ust B|> C}]=\Exp[\inner{B, v}] = 0.$$
	Recall $C \geq \sqrt{150} \|\us\|$. Hence, the concentration bound in \cite[Lemma 5.7]{LV07}, coupled with the fact that $B$ is isotropic, gives,
	$$\frac{\sqrt{\Exp[\inner{B, v}^2]}\sqrt{\Prob[|\inner{B, \us}|> C]}}{\Pr[|\ust B|\leq C]} \leq \frac{\sqrt{\Prob\left[|\inner{B, \frac{\us}{\|\us\|}}|> \sqrt{150}\right]}}{\Pr[|\ust B|\leq C]} \leq 2e^{-5}\leq \frac{1}{12}.$$
	So,
	$$\|\mu'\| = \sup\limits_{v \in \R^m ,\|v\| = 1}\EE\left[\langle B', b\rangle \right] \leq \frac{1}{12}.$$
	Define a convex subset of $\R^m$ by
	$$M:=\left\{\frac{\Exp[f(B')B']}{1/4}: f\in L_\infty(\R^m, \R),\ 0\leq f(x)\leq 1 \text{ for every } x\in \R^m\ \text{ and } \Exp[f(B')] =\frac{1}{4} \right\},$$ and
	consider an arbitrary halfspace $H$ that contains $-\mu'$.
	By \cref{lem:grunbaum} we have,
	$$\Prob[B'\in H]\geq \Prob[B\in H  ] - \Prob[|\ust B|> C]\geq \frac{1}{e} - \frac{1}{12}-\frac{1}{150} =\frac1{4}.$$
	Therefore, there exists $S\subseteq H$ with $\Prob[B'\in S]=\frac14$.
	Then, $
	\Exp[\mathbf{1}_S(B')]=\frac{1}{4}$ and, $\Exp[B'\mid B' \in S ]=\Exp[\frac{\mathbf{1}_S(B')B'}{1/4}]\in M$, which implies that $M\cap H \neq \emptyset$.
	Because this holds for any $H$ with $-\mu'\in H$, by the convexity of $M$ we have $-\mu'\in M$. Indeed, suppose not, then there is a hyperplane passing at $-\mu'$ which separates it from $M$, which cannot happen.
	We conclude that there exists $f:\R^m \to \R$, with $\|f\|_\infty \leq 1$ and $\Exp[f(B')]=\frac14$, such that $\frac{\Exp[f(B')B']}{1/4}=-\mu'$. Let $g(x)=\frac{f(x)+4}{5}$.

	Let $B''$ be a random variable with $f_{B''}(x)\propto g(x)f_{B'}(x)$. Note that $f_{B''}(x)/f_{B'}(x)=g(x)/\Exp_{B'}[g(B')] \leq 1/\Exp_{B'}[g(B')] \leq 2$.  Now, by \cref{lem:rejection-sampling-prelim} there exists a rejection sampling procedure $\psi$ with  $\Pr[\psi(B')=\mathsf{accept}]=\frac12$ and $\Law(B'\mid \psi(B')=\mathsf{accept})=B''$. Now we will show that $B''$ satisfies the required properties.
	
	Firstly, we have $\Exp[B'']=\frac{\Exp[g(B')B']}{\Exp[g(B')]}=\frac{\Exp[f(B')B'] + 4\Exp[B']}{5\Exp[g(B')]}=\frac{-4\mu'+4\mu'}{5\Exp[g(B')]}=0$, proving the first stated property.
	Secondly, for every halfspace $H$ containing the origin, by the Gr\"unbaum inequality, $\Pr[B\in H]\geq \frac{1}{e}$. So $\Pr[B'\in H]\geq \Pr[B\in H] - \Pr[|\ust B|> C]\geq \frac{1}{e}-\frac{1}{150}\geq \frac{1}{4}$.
	Because $f_{B''}\geq g \cdot f_{B'}\geq \frac{4}{5}f_{B'}$, we have $\Pr[B''\in H]\geq \frac45 \cdot\Pr[B'\in H]\geq \frac{1}{5}\geq\frac{1}{4e^2}$. This proves that $B''$ has an admissible distribution.

	For all unit vectors $v\in \mathbb{R}^m$ we have:
	$$
	f_{\langle v, B''\rangle}(x) \leq \frac{f_{\langle v, B'\rangle}(x)}{\Pr[\psi(B')=\mathsf{accept}]}\leq 2 f_{\inner{v, B'}}(x).
	$$
	This implies $\Var(\inner{v,B''})\preccurlyeq 2\Var(\inner{v,B'})\preccurlyeq2\mathrm{I}_m$.
	Because $\Cov(B')\geq \frac{1}{10}\mathrm{I}_m$ and the fact that $\inner{B',v}$ is logconcave, by \cref{lem:logconcave-density-bounded}, $f_{\inner{B',v}}\leq \frac{1}{\sqrt{\Var{\inner{B',v}}}}\leq 4$. Hence, $f_{\langle v, B''\rangle}\leq 2\cdot f_{\inner{v,B'}} \leq 768$.
	Now, \cref{lem:bound_pdf_variance} implies that $\Var(\inner{B'', v}) \geq \frac{1}{768}$. Hence $\frac{1}{768}\mathrm{I}_m \preccurlyeq\Cov(B') \preccurlyeq 2\mathrm{I}_m$.
	Now by \cref{lem:aclc} anti-concentration holds with a constant parameter.
\end{proof}

\subsubsection{Proof of \Cref{thm:ipgap_centered}}

\begin{proof}[Proof of \cref{thm:ipgap_centered}]
	Consider the optimal solutions $x^*$ and $\us$ to respectively \eqref{primal-lp} and \eqref{dual-lp}. We condition on the event $|N_0| \geq n/10^5$ and $\|u^*\|_2 \leq 32$, where $N_0 := \{i \in [n]: x_i^* = 0\}$. By \cref{lem:solution-props}, this event occurs with probability at least $1-e^{-\Omega(n)}$. Subject to this, we further condition on the exact values of $x^*$, $u^*$. We will show that for every such conditioning, the integrality is gap is small with high probability over the randomness of $A_{N_0}$. 

	Set $\delta:=\frac{\poly(m)\log n}{n}$, where the polynomial factor
is the same one as dictated by Theorem \ref{thm:main}. We now show that we
can construct a large subset $Z \subseteq N_0$, such that the
reduced costs of the variables indexed by $Z$ are small and the columns
$A_{\cdot,i}, i \in Z$, are independent and satisfy the necessary conditions
in order to apply \cref{thm:main} to round $x^*$ to a near optimal solution. By \Cref{lem:conditionaldist}, first note that $(c_i,A_{\dot,i}), i \in N_0$ are independent and distributed according to $\ust A_{\cdot,i} - c_i > 0$.  

	By \cref{lem:rejection-sampling-cont}, using rejection sampling we
can sample a set $Z\subseteq N_0$, such that $\mathrm{Law}(A_{\cdot, i}|i\in
Z)=\mathrm{Law}(A_{\cdot, i}\mid |\ust A_{\cdot, i}| \leq C )$, $\Pr[i\in Z|i\in N_0]=\Omega(\delta)$ and such that
${u^*}^\T A_{\cdot, i}-c_i \in [0,\delta]$ for all $i\in Z$. If columns of
$A$ have a discrete symmetric distribution (DSU), then $\Exp[A_{\cdot, i} | i\in Z]=0$.
Moreover, by \cref{lem:restriction}, the distribution of the columns in $Z$ is admissible
(since it is symmetric), as per Definition \ref{def:addmis}, and satisfies
the \eqref{eq:anticoncentration} property with parameter 
$\kappa = \Omega(1)$. In the logconcave case, (LI), we apply a second round of
rejection sampling to $Z$, as described in \cref{lem:sample-mean-zero}, which
achieves that the law of $A_{\cdot, i}, i \in Z$ is mean-zero,  admissible, and anti-concentrated with parameter $\kappa=\Omega(1)$. Furthermore, this second step of rejection sampling only decreases the probability that $i \in Z$ by at most a constant factor. 

In both cases, we see that $\E[Z] \geq \Omega(\delta |N_0|) =
\Omega\left(\poly(m) \log n \right)$. Thus, by the Chernoff
bound~\eqref{eq:chernoff}, $|Z| \geq \Omega\left(\poly(m) \log n \right)$
with probability at $1-n^{-\poly(m)}$. We now condition on the exact set $Z
\subseteq N_0$ subject to this size lower bound. Note that $A_{\cdot, i}, i
\in Z$, are independent admissible, anti-concentrated with parameter $\kappa = \Omega(1)$ and mean-zero random
vectors.

Set $p=\frac{\epsilon\cdot \kappa^4}{1000m^5}$, where $\epsilon>0$ is chosen small enough to have $\kappa^3 \exp\left(\frac{\kappa^3}{3\cdot80^2pm^3}\right)\geq 50000m^2$. We consider the rounded vector $x'$, from Lemma \ref{lem:rounding}, and define the target, $t:=A(x^*-x')-n^4\exp(-p\kappa^3|Z|/m)\mathbf{1}_m$ in (LI) setting and $t := \floor{kA(x^*-x')}/k$ in the (DSU) setting.
	We will now apply \cref{thm:main} to obtain a set $T\subseteq Z$ such  that $\|\sum_{i\in T}A_{i}-t\|_2\leq n^4\exp(-\kappa^3p|Z|/m)$ in the (LI) setting or $\sum_{i \in T} A_i = t$ in the (DSU) setting. 
	 This will help us both fix the slack introduced by the rounding as well as enforce that the resulting solution to be feasible.

	We now invoke \cref{lem:rounding}, which coupled with $|Z|p = \Omega(\poly(m)\log(n))$ and the fact $\max\limits_{i \in [n]} \|A_{\cdot,i}\| = O(\sqrt{\log(n)}+\sqrt{m})$, shows that, as long as the degree of the polynomial in $\delta$ is large enough,
	\begin{align*}
	\|t\|&\leq \|A(x^*-x')\|+m(n^4\exp(-\kappa^3 p|Z|/m)+1) \\&\leq O(\sqrt{m\log n} + m) = o(\sqrt{p|Z|m}).
	\end{align*} 
	Thus, for large $n$, \cref{thm:main} applies to the matrix $A_{\cdot,Z}$ and $t$ in the (LI) setting and the matrix $kA_{\cdot,Z},kt$ in the (DSU) setting. Thus, with probability $1-e^{-\Omega(p|Z|)} = 1 - n^{-\poly(m)}$ there exists a set $T\subseteq Z$ such that $|T| \leq \frac{3}{2}p|Z|$, and $\|\sum_{i\in T}A_{i} -t\|\leq 32 n^4\exp(-\kappa^3p|Z|/80m)$ in the (LI) setting and $\sum_{i \in T} A_i = t$ in the (DSU) setting.
	
	Now we let $x''=x'+\mathbf{1}_T$. We now show that 
\begin{equation}
Ax'' \leq b \quad \text{ and } \quad \ust (b-Ax'') \leq 1/\poly(n). \label{eq:cent-gap-ingredients} 
\end{equation}

Firstly, in the (DSU) setting, we have
        \[
         Ax'' = Ax' + \floor{k(Ax^*-Ax')}/k \leq Ax^* \leq b,
        \]
        so $x''$ is a feasible integer solution. Take $j \in [m]$ such that $\us_j > 0$. By complementary slackness we have that $(Ax^*)_j = b_j \in \Z/k$. Since $A \in \Z^{m \times n}/k$ and $x' \in \Z^n$, we have that $Ax' \in \Z^m/k$. In particular, 
\[
(Ax'')_j = (Ax')_j + \floor{k(Ax^*-Ax')_j}/k = (Ax')_j + \floor{k(b_j-Ax')_j}/k = (Ax')_j + k(b_j-Ax')_j/k = b_j.
\]   
We conclude that $\ust(b-Ax'') = 0$ as needed.
        
In the (LI) setting, we first note that  
	\begin{align*}
	\|Ax''-(Ax^* - n^4\exp(-\kappa^3 p|Z|/m)\mathbf{1}_m) \|=\|\sum_{i\in T}A_i-t\|\leq n^4\exp(-\kappa^3 p|Z|/m).
	\end{align*}
	So, we must have $Ax''\leq Ax^*\leq b$, and hence $x''$ is a
feasible. Furthermore, by complementary slackness
\begin{align*}
\ust(b-Ax'') &= \ust(Ax^*-Ax'') \leq \|\us\|\|Ax^*-Ax''\| 
\leq 32(\|Ax^*-t\| + \|t-Ax''\|) \\ &\leq 32(m+1)n^4\exp(-\kappa^3 p |Z|/m) \leq 1/\poly(n).
\end{align*}

To conclude, we use $x''$ to bound the integrality gap with the (\ref{primal-dual gap}) applied to $x''$ and $u^*$:
	\begin{align*}
	\mathsf{IPGAP}(A,b,c) &=  \ust (b-Ax'') + \left(\sum_{i=1}^n x''_i(A^\T u^*-c)_i^+ + (1-x''_i)(c-A^\T u^*)_i^+ \right) \\&= \ust (b-Ax'') + \sum_{i\in T} (A^\T u^*-c)_i \quad \text{(by complementary slackness)}\\
	&\leq 1/\poly(n) + |T|\cdot \delta \hspace{5.5em} \text{(by $\eqref{eq:cent-gap-ingredients}$ and $T\subseteq Z$)}\\
	&\leq O\left(\frac{\poly(m)\log(n)^2}{n}\right).
	\end{align*}
\end{proof}

\subsection{The Gap Bound for Packing IPs}
In this section we will prove \cref{thm:ipgap_packing}. Here, the
objective $c \in \R^m$ has independent entries that are exponentially distributed with parameter $\lambda=1$. The $m
\times n$ constraint matrix $A$ has independent columns which are distributed
with (DU) independent entries which are
uniform on the interval $\{1,\ldots,k\}$, $k \geq 3$. 

As in the centered case, we
divide the constraint matrix $A$ and the right hand side $b$ by $k$. So, we will assume that the entries of $A$ are uniformly distributed in $\{\frac 1k,\ldots,1\}$ and
that the right hand side $b$ lies in  $((n \beta,n(1/2-\beta)) \cap \frac{\Z}{k})^m$. This way, we can see this setting as a discrete approximation of the continuous setting where the entries of $A$ are uniformly distributed in $[0,1]$, like in \cite{dyer_probabilistic_1989}. 

We want to show $\mathsf{IPGAP}(A,b,c) \leq
 \frac{\exp(O(1/\beta)) \poly(m)(\log n)^2}{n}$ with probability at least $1-n^{-\poly(m)}$.
 We will do this by first solving a slightly modified version of the LP-relaxation. We choose a $b'<b$. Now we let $x^*$ be the minimizer of \eqref{primal-lp}, where $b$ is replaced by $b'$ and let $u^*$ be the optimal solution to the corresponding \eqref{dual-lp}. We round down the solution, setting $x'_i:=\floor{x^*_i}$. Note that $\|A(x^*-x')\|\leq \sum_{i:x^*_i\in (0,1)} \|A_{\cdot, i}\|\leq m\sqrt{m}$.
 
 Similar to the proof of \cref{thm:ipgap_centered}, our proof proceeds by flipping $x'_i$ to $1$ for a subset of indices for which $x^*_i=0$. By duality, these are columns with $A_{\cdot, i} - c_i \geq 0$. To be able to apply \cref{thm:disc-concrete}, we convert the conditional distribution of the columns of $A$ back into their original distribution using rejection sampling:
\begin{lemma}
	Let $B=A_{\cdot, i}$ and let $B', c'$ have $\Law(B', c'_i) = \Law(B,c_i\mid \ust B-c_i\geq 0)$.
When $\delta \leq 1$, here exists a rejection sampling procedure $\psi$ such that:
\begin{itemize}
\item $\Pr[\psi(B',c'_i)=\mathsf{accept}]\geq \frac{1}{11}\delta \exp(-\|\us\|_1)$.
\item $\Law(B'\mid \psi(B',c'_i)=\mathsf{accept})=\operatorname{unif}((\{\frac1k,\ldots, 1\} \cap [\frac{1}{3m},1])^m )$.
\item $\ust B' -c'_i \in [0,\delta]$ whenever $\psi(B', c'_i)=\mathsf{accept}$.
\end{itemize} 
\label{lem:rejection-sampling-packing}
\end{lemma}
\begin{proof}[\ifflag Proof \else Proof of \cref{lem:rejection-sampling-packing} \fi]
Let $c''$ and $B''$ be independent random variables with $c''\sim \operatorname{Unif}(0, \delta)$ and let $B''\sim \operatorname{Unif}((\{\frac1k, \ldots, 1\} \cap [\frac{1}{3m}, 1])^m )$. Note that $f_{B''}(x)\leq f_{B}(x)/(1-\frac1m)^m$. Now we will apply \cref{lem:rejection-sampling-prelim} to transform $(B', c')$ into $(B'', c'')$ using rejection sampling. Observe that for any $\bar{B}\in \mathbb{R}^m, \bar{c}\in \mathbb{R}$ with $\ust B - c \in [0,\delta]$:
\begin{align*}
	\frac{f_{(B'', c'')}(\bar{B}, \bar{c})}{f_{(B', c')}(\bar{B}, \bar{c})} &
	=\frac{	  f_{B''}(\bar{B}) / \delta }{f_{B}(\bar{B}) f_c(\bar{c})/ \Pr_c[\ust B -\bar{c} \geq 0]}
	\leq \frac{	  f_{B}(\bar{B})/ (1-\frac1m)^m / \delta }{f_{B}(\bar{B}) f_c(\bar{c})}
	\leq \frac{4}{  \delta f_c(\bar{c})}
	\\&\leq \frac{4}{  \delta \exp(- \ust B  -\delta )}\leq \frac{11}{  \delta \exp(- \|\us\|_1 )} .
\end{align*}
The stated result now follows directly by applying \cref{lem:rejection-sampling-prelim} on $(B', c'_i)$ to get a rejection sampling procedure that satisfies  $\Law((B', c'_i)\mid \psi(B', c'_i)=\mathsf{accept} )=\Law(B'',c'')$.
\end{proof}
 In the previous lemma, both the acceptance probability and the maximal size of $\delta$ depend on $\|\us\|_1$. To prevent this from affecting the proof, we will show that with high probability $\Omega(\beta^4)\leq \|u^*\|_1 \leq O(\frac{1}{\beta})$. Because our proof of \cref{thm:ipgap_packing} will rely on flipping the columns for which $x^*_i=0$, we will also show that with high probability the number of these columns is at least proportional to $n$.
\begin{lemma}
	\label{lem:solution-props-packing}
	Consider the packing setting, with $\beta \in (0,1/4)$ and $b' \in
	((n \beta/2,n(1/2-\beta)) \cap \frac1k \Z)^m$. Then, with probability at least $1 - e^{-\Omega(\beta^2n)}$, we have $\Omega(\beta^4)\leq \|u^*\|_1 \leq O(\frac{1}{\beta})$ and $\left|N_0\right| \geq \Omega( \beta^4n) $.
\end{lemma}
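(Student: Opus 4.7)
For the upper bound on $\|u^*\|_1$, I would start from weak duality: since $c\geq 0$, dropping the constraint $Ax\leq b'$ in \eqref{primal-lp} only increases the maximum, so $\val_\mathsf{LP}(A,b',c)\leq \max_{x\in[0,1]^n}c^\T x = \|c\|_1$, and a standard concentration bound for i.i.d.\ exponentials gives $\|c\|_1\leq 2n$ with probability at least $1-e^{-\Omega(n)}$. Combined with the dual lower bound $(b')^\T u^*\geq (n\beta/2)\|u^*\|_1$, which uses $b'\geq (n\beta/2)\mathbf{1}$ and $u^*\geq 0$, this yields $\|u^*\|_1\leq 4/\beta = O(1/\beta)$.

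The main technical ingredient is a uniform sub-sum estimate for the first row of $A$: with probability at least $1-e^{-\Omega(\beta^2 n)}$, every $S\subseteq[n]$ with $|S|\geq (1-\beta^4)n-m$ satisfies $\sum_{i\in S}A_{1i}>b'_1$. For a fixed $S$ of size $s$, Hoeffding's inequality applied to the bounded i.i.d.\ variables $A_{1i}\in[0,1]$ with mean $\mu=(k+1)/(2k)\geq 1/2$ gives $\Pr[\sum_{i\in S}A_{1i}\leq s\mu-n\beta/2]\leq e^{-n\beta^2/2}$, and one directly checks $s\mu-n\beta/2>n(1/2-\beta)\geq b'_1$ once $n\gg m/\beta$. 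A union bound over such subsets contributes only $\exp(H(\beta^4+m/n)\,n)=e^{O(\beta^4 n\log(1/\beta))}$ (via \Cref{lem:binomial_bound}), which is absorbed into $e^{\Omega(\beta^2 n)}$ because $\beta^4\log(1/\beta)\ll \beta^2$ for small $\beta$.

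Both remaining claims then follow by contradiction. Feasibility gives $\sum_{i\in N_1}A_{1i}\leq (Ax^*)_1\leq b'_1$; if $|N_0|<\beta^4 n$ then $|N_1|\geq (1-\beta^4)n-m$ and the sub-sum bound would give $\sum_{i\in N_1}A_{1i}>b'_1$, contradiction. Hence $|N_0|\geq \beta^4 n$. For the lower bound on $\|u^*\|_1$, suppose $\|u^*\|_1<\beta^4/8$. Since $A_{ji}\leq 1$, we have $(A^\T u^*)_i\leq \|u^*\|_1<\beta^4/8$ for every $i$, so by complementary slackness $N_0\subseteq\{i:c_i\leq \beta^4/8\}$. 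The latter set depends only on $c$; each $i$ belongs to it independently with probability $1-e^{-\beta^4/8}\leq \beta^4/8$, so by Chernoff $|\{i:c_i\leq \beta^4/8\}|\leq n\beta^4/2$ with probability at least $1-e^{-\Omega(n\beta^4)}$, contradicting $|N_0|\geq \beta^4 n$.

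The main obstacle is calibrating the sub-sum estimate so that the Hoeffding tail $e^{-n\beta^2/2}$ beats the entropy of the subset family $\binom{n}{\beta^4 n+m}$; this works precisely because $\beta^4\log(1/\beta)\ll \beta^2$, and is what forces the somewhat loose bound $\Omega(\beta^4 n)$ on $|N_0|$ (rather than the stronger $\Omega(\beta n)$ that a cleaner feasibility argument could in principle deliver). All the auxiliary high-probability events are dominated by the main $e^{-\Omega(\beta^2 n)}$ term, consistent with the claimed probability.
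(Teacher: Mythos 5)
Your proof is correct and follows essentially the same structure as the paper's: (i) weak duality together with concentration of $\|c\|_1$ (the paper uses Lemma~\ref{lem:sumlogconcave} for $\sum c_i$) to get $\|u^*\|_1 = O(1/\beta)$, (ii) a union bound over dense $\{0,1\}$-vectors plus a Hoeffding estimate on a single row to get $|N_0|=\Omega(\beta^4 n)$, and (iii) the observation that every $i\in N_0$ forces $c_i < u^{*\T}A_{\cdot,i}\le\|u^*\|_1$, combined with a Chernoff bound on $|\{i:c_i\le \beta^4/8\}|$, to conclude $\|u^*\|_1 = \Omega(\beta^4)$. The only substantive difference is in bookkeeping: the paper introduces an auxiliary threshold $\alpha:=\min\bigl(\tfrac12\beta, H^{-1}(\tfrac18\beta^2)\bigr)$ calibrated so that the entropy term $e^{H(\alpha)n}$ is dominated by the Hoeffding tail $e^{-\frac14\beta^2 n}$ by construction, and then lower-bounds $\alpha\ge\frac{1}{256}\beta^4$ via $H(x)\le 2\sqrt{x}$; you instead fix the threshold at $\beta^4$ from the outset and check the inequality $H(\beta^4+m/n)\ll\beta^2/2$ post hoc, which is tight near $\beta=1/4$ but does hold with a small constant factor absorbed into the $\Omega(\beta^4)$ in the statement. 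Your last Chernoff step yields failure probability $e^{-\Omega(\beta^4 n)}$ rather than $e^{-\Omega(\beta^2 n)}$; this matches what the paper's own proof actually gives (the paper's "$1-\exp(-\Omega(n))$" at that step should read "$1-\exp(-\Omega(n\alpha))$"), so this is a shared slack between the proof and the stated exponent, not a defect of your argument relative to the paper's.
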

\begin{proof}[\ifflag Proof \else Proof of \cref{lem:solution-props-packing} \fi]
	Note that the distribution of the $c_i$'s is exponential and therefore logconcave with $\Exp[c_i]=1$. By \cref{lem:sumlogconcave} we now see that with probability $1-e^{-\Omega(n)}$, we have $3n \geq \sum_{i=1}^n c_i$ and consequently,
	\begin{align*}
	3 n \geq \sum_{i=1}^n c_i = c^\T {\bf 1}_n \geq \val_{\textsf{LP}}(x^*) = \val^*(u^*)\geq \sum_{i=1}^mb'_iu^*_i \geq \frac{n\cdot \beta}{2} \|u^*\|_1.
	\end{align*}
	Hence, we have $\|u^*\|_1\leq\frac{6}{\beta}$, with high probability.

	For the second claim, let $H:(0,\frac12]\to (0,-\log(\frac12)]$ be defined with $H(x)=-x\log x-(1-x)\log(1-x)$.  Set $\alpha:= \min(\frac12\beta,H^{-1}(\frac18\beta^2))$. As $H(x)\leq 2\sqrt{x}$, we have $H^{-1}(x)\geq \frac{x^2}{4}$ and hence $\alpha\geq \frac{1}{256}\beta^4$. Let $x\in \{0,1\}^n$ and suppose that $K:=|\{i:x_i=1\}|\geq (1-\alpha) n$. By first using $b_1 \leq (\frac{1}{2} - \beta)n$, and $\EE[(Ax)_1] = \frac{K}{2}$, and then applying Hoeffding's inequality we see,
	\begin{align*}
	\Pr\left[(A x)_{1}\leq  b'_{1}\right]&\leq\Pr\left[(A x)_{1}\leq  \frac{1-\beta}{2}n\right]=\Pr\left[(A x)_{1}-\frac{1-\alpha}{2}n \leq -\frac{\beta-\alpha}{2}n \right]\\
	&\leq \Pr\left[(A x)_{1}- \frac{K}{2}\leq -\frac{\beta-\alpha}{2}n\right]=
	\Pr\left[(A x)_{1}- \Exp[(A x)_{1}]\leq -\frac{\beta-\alpha}{2}n \right]\\
	&\leq  \exp\left(-(\beta-\alpha)^2n\right)\leq  \exp\left(-\frac14\beta^2n\right)\text{.}
	\end{align*}
	Let $S=\{x \in \{0,1\}^n:  |\{i:x_i=1\}|\geq (1-\alpha)n\}$. Note that by \cref{lem:binomial_bound}, $|S|\leq \sum_{i=0}^{\floor{\alpha n}}\binom{n}{i}\leq \exp(H(\alpha)n)$.
	Taking the union bound over all $x\in S$, we see that
\begin{align*}
	\Pr[\exists x \in S: (A x)_{1}'\leq  b_{1}' ]&\leq |S|\exp(-\frac14\beta^2n)\leq \exp(H(\alpha) n-\frac14\beta^2n)\leq \exp(-\frac18\beta^2n).
	\end{align*} So, with probability at least $1-e^{-\Omega(\beta^2 n)}$ all feasible values $x\in \{0,1\}^n$ have $|\{i:x_i=0\}|\geq \alpha n$ and in particular $|N_0|\geq \alpha n\geq \frac{1}{256}\beta^4n$.
	
	At the same time, observe that when $i\in N_0$, we must have $c_i - \ust A_{\cdot, i}\leq 0$, so in particular $\|\us\|_1\geq c_i$. We have $\Prob[c_i\leq  \log(\frac{1}{1-\alpha/2})]\leq 1-\exp(-\log(\frac{1}{1-\alpha/2})) = \frac{1}{2}\alpha$. By the Chernoff bound \eqref{eq:chernoff} this implies that with probability at least $1-\exp(-\Omega(n))$ we have $|\{i\in [n]:c_i \leq \log(\frac{1}{1-\alpha/2})\}| \leq \frac{3}{4}\alpha n$. If this event holds and at the same time we have $|N_0|\geq \alpha n$, then this implies $\|\us\|_1\geq \log(\frac{1}{1-\alpha/2})$ because otherwise $N_0\subseteq \{i\in [n]:c_i \leq \log(\frac{1}{1-\alpha/2})\}$, contradicting the bounds on their size. So, we can conclude that with high probability we have $\|\us\|_1\geq \log(\frac{1}{1-\alpha/2})\geq -\log(1-2^{-8}\beta^4)\geq 2^{-8}\beta^4$.

\end{proof}
 
\begin{proof}[Proof of \cref{thm:ipgap_packing}]

	Let $r=\ceil{\frac{10^6  m^{12}\log(n)}{s^2 }} $, where $s$ is 	a constant that we will choose later. Let $\mu=\frac{k+ \ceil{\frac{k}{3m}} }{2k} = \E[U]$, where $U \sim \mathrm{Uniform}(\{\frac 1k,\ldots, 1 \} \cap [\frac{1}{3m}, 1])$. Now we define	\begin{equation} \label{eq:reduced_constraint}
	\gamma=\frac{r}{1000m^5} \mu  \text{ and }\ b' = b-\gamma\mathbf{1}.
	\end{equation} 
	Let $x^*$ and $\us$ be the optimal solutions of \eqref{primal-lp} and \eqref{dual-lp} where $b$ is replaced by $b'$. We will assume that $ \frac{\beta^4}{C_1} \leq \|\us\|_1\leq \frac{C_1}{\beta}$ and $|N_0|\geq C_2\cdot \beta^4\cdot n$, for some constants $C_1, C_2$. By  \cref{lem:solution-props-packing} this happens probability $1 - e^{-\Omega(\beta^2n)}$. Subject to this, we condition on the exact values of $x^*$, $\us$.
	
	Let $\delta:=\frac{11\exp\left(C_1/\beta\right)r }{C_2\beta^4n }$. Note that by our assumption that $n \geq \poly(m) \exp(\Omega(1/\beta))$, we may assume that $\delta \leq \beta^4/(C_1 m) \leq \frac{\|\us\|_1}{3m }$. 
	Thus, by \cref{lem:rejection-sampling-packing} we can sample a set $Z\subseteq N_0$ such that for $\mathrm{Law}(A_{\cdot, i}|i\in Z)=\mathrm{Uniform}((\{\frac 1k,\ldots, 1 \} \cap [\frac{1}{3m}, 1])^m)$ and that $\Pr[i\in Z|i\in N_0]=\frac1{11}\delta \exp\left(-\|\us\|_1\right) $. Noting that $\Exp[|Z|]=2r$, by Chernoff's inequality \eqref{eq:chernoff}, with probability at least $1-n^{-\poly(m)}$, we have $|Z|\geq r$. Now we restrict $Z$ to its first $s$ elements, to get $|Z|=r$. Observe that $\Exp[A_{\cdot, i}|i\in Z]=\mu \mathbf{1}$.

	We consider the target vector $t\in \mathbb{R}^m$, defined by:
	\begin{align*}
		t_i:=\begin{cases}
		b_i-(Ax')_i:& \us_i>0\\
	        \floor{\gamma}:& \text{otherwise}
		\end{cases},
	\end{align*}
	which satisfies $(b-Ax'-t)^\T  \us = 0$.
	Our next step will be to apply \cref{thm:disc-concrete} on $kA_{\cdot,Z} \in \Z^{m \times r}$ and $kt \in \Z^m$ with parameter $p=\frac{\gamma}{\mu r}=\frac{1}{1000m^5}$, to get a set $T\subseteq Z$ such that $\sum_{i\in T}A_{i}=t$. Note that we have chosen $\gamma$ and $r$ to have $p^4=\omega(\frac{m^3}{r})$.

	To verify that $t$ is indeed covered by Theorem
\ref{thm:disc-concrete}, we first note that by \cref{lem:conditionaldist} the
columns $kA_{\cdot, i}$ for $i\in Z$ are independent with entries uniformly
distributed in $\{\ceil{k/m},\dots,k\}$. We now show that $t$ is
sufficiently close to the mean $|Z|p\mu$:
	\begin{align*}
	\left\|t-|Z|p\mu | \right\|& =\left\|t- \gamma\mathbf{1} \right\|=\sqrt{\sum_{j:\us_j>0} (b_j-(Ax')_j)^2 + |\{j: \us_j = 0\}|(\gamma-\floor{\gamma})^2} \\ &\leq \sqrt{\sum_{j:\us_j>0} (A(x^*- x')_j)^2 + |\{j: \us_j = 0\}|}
	\leq \sqrt{\sum_{j:\us_j>0} \|x^*-x'\|_1^2 + |\{j: \us_j = 0\}|} \\ &\leq m^{1.5} \leq \frac{s}{1000m^5}\sqrt{r m} = sp\sqrt{|Z|m}\leq s\sqrt{p|Z|m}\text{.}
	\end{align*}
	Now choose the constant $s$ such that the previous inequality implies the condition from \cref{thm:disc-concrete}.
	As a result, with probability  $1-\exp\left(-p|Z|\right) \geq 1-n^{-\poly(m)}$ there exists a set $T\subseteq Z$, such that $\sum_{i\in T}A_{i}=t$. 

	Let $x''=x'+\mathbf{1}_T$. Noting that $x'$ was obtained from $x^*$, for $i$ with $\us_i=0$ we have
	$$(Ax'')_i=(Ax')_i+ t_i \leq b'_i + \gamma = b_i.$$ For, $i$ with $\us_i>0$ we have:
	$$(Ax'')_i=(Ax')_i+ t_i = b_i,$$
	  which means that $x''$ is a feasible solution to the integer program.
	  
	  Using (\ref{primal-dual gap}) for $x''$ and $u^*$, we now get:
	\begin{align*}
	\mathsf{IPGAP}(A,b,c) &= \text{val}_{\mathsf{LP}}(A,b,c) - \text{val}_{\mathsf{IP}}(A,b,c) \leq \text{val}_b^*(\us) - \text{val}_c(x'')\\
	&= b^\T \us + \sum_{i=1}^n (c-A^\T \us)_i^+ - c^\T  x''   \\
	&= (b-Ax'')^\T  \us + \left(\sum_{i=1}^n x''_i(A^\T \us-c)_i^+ + (1-x''_i)(c-A^\T \us)_i^+ \right) \\
	&= (b-Ax'-t)^\T  \us + \sum_{i\in T} (A^\T \us-c)_i  \hspace{1.2cm}  \text{(by complementary slackness)}\\
	&= \sum_{i\in T} (A^\T \us-c)_i  \hspace{3cm} (\text{ since } (b-Ax'-t)^\T  \us=0) \\
	&\leq \delta|T| \leq \frac{(\exp(C_1/\beta)\poly(m)\log n)^2}{n}. \hspace{1cm} \text{(by Lemma \ref{lem:rejection-sampling-packing})}
	\end{align*}
	
\end{proof}

   \section{The Size of the Branch-and-Bound Tree}
\label{sec:bnb}

In this section, we prove \Cref{cor:branch-and-bound} in the
discrete setting. This will follow by adapting the proof of
\Cref{thm:meta-logcon} from~\cite{BDHT22}.

The first ingredient is the key theorem from \cite[Theorem 3]{dey2021branch},
which relates the Branch-and-Bound tree size to the size of a certain knapsack.

\begin{theorem} \label{thm:bnb}
Let $A\in\mathbb{R}^{m\times n}$, $b\in \mathbb{R}^m$, $c\in \mathbb{R}^n$.
Then, the best bound first Branch-and-Bound algorithm applied to
\ref{primal-ip} with data $A,b,c$ produces a tree of size
\begin{equation}
2n \cdot |\{x \in \{0,1\}^n: \sum_{i=1}^n x_i |(A^\T u^* - c)_i| \leq \mathsf{IPGAP}(A,b,c)\}| + 1, \label{eq:bnb-size}
\end{equation}
where $u^*$ is an optimal solution to \eqref{dual-lp}.
\end{theorem}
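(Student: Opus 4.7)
The plan is to derive the tree-size estimate in three conceptual steps.

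First, I would analyze the LP value at an arbitrary sub-problem $N$ with variables fixed to $x_i = 0$ for $i \in F_0$ and $x_i = 1$ for $i \in F_1$ by plugging the root dual solution $u^*$ into the dual of the restricted LP (which only weakens the bound). Using the identity $\val_\mathsf{LP}(A,b,c) = u^{*\T}b + \sum_i (c_i - u^{*\T}A_{\cdot,i})^+$ and rearranging, I expect to obtain
\[
\val_\mathsf{LP}(A,b,c) - \mathrm{LP}(N) \;\geq\; \sum_{i \in F_1} (u^{*\T}A_{\cdot,i} - c_i)^+ \;+\; \sum_{i \in F_0} (c_i - u^{*\T}A_{\cdot,i})^+.
\]
The right-hand side equals $\sum_i w(N)_i \, |(c - A^\T u^*)_i|$, where $w(N) \in \{0,1\}^n$ is the \emph{wrong-fixing indicator}: $w(N)_i = 1$ iff coordinate $i$ is fixed at $N$ to the value opposite to $\sign(c_i - u^{*\T}A_{\cdot,i})$.

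Second, I would argue that every internal (branched) node $N$ satisfies $\mathrm{LP}(N) \geq \val_\mathsf{IP}(A,b,c)$, so $w(N)$ lies in the knapsack $K := \{x \in \{0,1\}^n : \sum_i x_i |(c - A^\T u^*)_i| \leq \mathsf{IPGAP}(A,b,c)\}$. Best-bound-first processes nodes in non-increasing order of LP value. Let $N^*$ be the first processed node whose LP-relaxation attains an integer optimum of value $\val_\mathsf{IP}$ (such a node exists along the root-to-leaf path corresponding to the optimal integer solution). Every node processed at or before $N^*$ has LP value $\geq \mathrm{LP}(N^*) = \val_\mathsf{IP}$; every node processed after $N^*$ and not immediately pruned against the incumbent $\val_\mathsf{IP}$ has LP value strictly greater than $\val_\mathsf{IP}$. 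In both cases, being branched implies $\mathrm{LP}(N) \geq \val_\mathsf{IP}$, hence $w(N) \in K$ by the first step.

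Third, I would show, for each $w \in K$, that the number of tree nodes $N$ with $w(N) = w$ is at most $n$. The key structural observation is that when one branches on a free coordinate $i$ at a node with wrong-fixing $w$, exactly one child fixes $i$ correctly (so wrong-fixing remains $w$) while the other fixes $i$ wrongly (so wrong-fixing becomes $w + e_i$). Thus among the nodes with $w(\cdot) = w$, each has at most one child in this set; moreover, every ancestor of such a node has wrong-fixing coordinatewise $\leq w$. Consequently these nodes form a single path in the B\&B tree, whose length is at most $n - \|w\|_1 \leq n$ since each step along the path fixes a new coordinate correctly.

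Combining the three ingredients: the number of internal nodes is at most $n \cdot |K|$, and since the B\&B tree is binary the total tree size is at most $2n|K| + 1$, matching the claim. The main subtlety will lie in step two, where one has to verify carefully that even nodes processed \emph{before} the incumbent reaches $\val_\mathsf{IP}$ already satisfy $\mathrm{LP}(N) \geq \val_\mathsf{IP}$; this relies crucially on the non-increasing LP-value processing order of best-bound-first together with the existence of a root-to-leaf path attaining the integer optimum.
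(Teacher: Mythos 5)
The paper does not actually prove Theorem~\ref{thm:bnb}; it cites it directly from Dey, Dubey, and Molinaro \cite{dey2021branch}, so there is no in-paper proof to compare against. That said, your blind reconstruction is correct and follows the standard argument underlying that source: (1) plugging the root dual $u^*$ into the dual of the restricted LP at a node $N$ and rearranging gives $\val_\mathsf{LP}(A,b,c) - \mathrm{LP}(N) \geq \sum_i w(N)_i\,|(c - A^\T u^*)_i|$ for the wrong-fixing indicator $w(N)$; (2) the best-bound-first order guarantees every branched node satisfies $\mathrm{LP}(N) \geq \val_\mathsf{IP}(A,b,c)$, so $w(N)$ lies in the knapsack $K$; (3) nodes sharing a wrong-fixing vector form a single descending path of length at most $n$, bounding the internal-node count by $n|K|$ and hence the binary-tree size by $2n|K| + 1$.

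Two small points worth tightening. First, when $(c - A^\T u^*)_i = 0$ the phrase ``opposite to $\sign(c_i - u^{*\T}A_{\cdot,i})$'' is undefined; you should fix an arbitrary but consistent convention (e.g.\ declare the correct fixing to be $0$). Both the algebraic identity in step one and the single-path argument in step three survive this choice because the corresponding weight $|(c - A^\T u^*)_i|$ is zero. Second, the definition of $N^*$ in step two carries more baggage than needed: a cleaner statement is that at the moment a node $N$ is branched, either the incumbent already equals $\val_\mathsf{IP}$ (so $\mathrm{LP}(N) > \val_\mathsf{IP}$ since $N$ was not pruned), or some ancestor of the optimal integer leaf is still open and feasible for $x^\mathrm{IP}$, so best-bound-first selecting $N$ forces $\mathrm{LP}(N) \geq \val_\mathsf{IP}$. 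This avoids having to reason about which node along the optimal path is the first with an integral LP optimum.
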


Given the above, we must show how to upper bound the size of the knapsack
in~\eqref{eq:bnb-size} in the setting of \Cref{cor:branch-and-bound}. Namely,
when the entries of $c \in \R^n$ are either standard Gaussian or exponential
and the entries of $A$ are discrete.  

For this purpose, we will require the following bound on the size on random
knapsack polytopes.

\begin{lemma}[{\cite[Lemma 11]{BDHT22}}]
\label{lem:one-knapsack}
Let $\omega_1,\dots,\omega_n \in \R$ be independent continuous random
variables with maximum density at most $1$. Then, for any $G \geq 0$, we have
\[
\E[|\{x \in \{0,1\}^n: \sum_{i=1}^n x_i |\omega_i| \leq G\}|] \leq e^{2\sqrt{2 n G}}.
\]
\end{lemma}

To get a bound on~\eqref{eq:bnb-size}, we will use the above lemma for $G$
being an upper bound on $\mathsf{IPGAP}(A,b,c)$, using a union bound over all
dual solutions of suitably bounded norm. In particular, we will require an
upper bound on the norm of $u^*$. We note that the proof of
\Cref{thm:meta-logcon} in~\cite{BDHT22} did not require a bound on the norm
of the dual solution. In the logconcave setting, one could get around needing
such a bound by using the anti-concentration properties of the columns of the
objective extended constraint matrix $W^\T := (c,A^\T)$. In the present
setting, we will only be able to rely on the anti-concentration properties of
the objective $c$.

In both the centered and packing case, the coefficients of $c \in \R^n$ are
independent and have maximum density at most $1$. In particular, for any
$u \in \R^m$, the entries of $(A^\T u - c)$ are also independent and have
maximum density $1$. By applying an appropriate union bound (e.g., see the
proof of \cite[Lemma 16]{BDHT22}), one can derive the following bound over a
family of knapsacks:

\begin{lemma}
\label{lem:bnd-norm-bd}
Let $c \in \R^n$ have independent coordinates with maximum density at most
$1$ and let $A \in [-1,1]^{m \times n}$ have independent entries. Then, for
any $n\geq m, R \geq 2, G \geq 1/n$, we have that
\[
\Pr[\max_{\|u\|_2 \leq R} |\{x \in \{0,1\}^n: \sum_{i=1}^n x_i|(A^\T u-c)_i| \leq G\}| \geq (nR)^{\Theta(m)} e^{2\sqrt{2nG}}] \leq n^{-\Omega(m)}.
\]
\end{lemma}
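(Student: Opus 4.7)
The plan is a standard $\epsilon$-net argument over $\{u \in \R^m : \|u\|_2 \leq R\}$, combined with \Cref{lem:one-knapsack} applied pointwise and a union bound. For brevity, write $K(u, G) := |\{x \in \{0,1\}^n : \sum_{i=1}^n x_i |(A^\T u - c)_i| \leq G\}|$.

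First I record the Lipschitz estimate. Since $A \in [-1,1]^{m \times n}$, Cauchy--Schwarz gives $|(A^\T(u-u'))_i| \leq \sqrt{m}\,\|u-u'\|_2$ for each $i$, so the reverse triangle inequality yields
\[
\bigl|\textstyle\sum_{i=1}^n x_i |(A^\T u - c)_i| - \sum_{i=1}^n x_i |(A^\T u' - c)_i|\bigr| \leq n\sqrt{m}\,\|u-u'\|_2
\]
for every $x \in \{0,1\}^n$. Hence if $\|u - u'\|_2 \leq \epsilon$, then $K(u,G) \leq K(u', G + n\sqrt{m}\epsilon)$. I will choose $\epsilon := \sqrt{2G}/(n^{3/2}\sqrt{m})$, for which $n\sqrt{m}\epsilon = \sqrt{2G/n}$. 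Using the elementary bound $\sqrt{2nG'} - \sqrt{2nG} \leq n(G'-G)/\sqrt{2nG}$, this choice gives $\sqrt{2nG'} \leq \sqrt{2nG} + 1$, so $e^{2\sqrt{2nG'}} \leq e^2\, e^{2\sqrt{2nG}}$.

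For a fixed $u' \in \R^m$, the coordinates $\omega_i := (A^\T u' - c)_i$ depend only on $(A_{\cdot,i}, c_i)$ and so are independent across $i$. Moreover, conditional on $A_{\cdot,i}$, the random variable $\omega_i$ is a shift of $-c_i$, hence has density at most $1$, and averaging over $A_{\cdot,i}$ preserves this bound. \Cref{lem:one-knapsack} applied to $G' := G + \sqrt{2G/n}$ thus gives $\E[K(u', G')] \leq e^{2\sqrt{2nG'}} \leq e^2\, e^{2\sqrt{2nG}}$, and Markov yields $\Pr[K(u', G') \geq K_0] \leq e^2\, e^{2\sqrt{2nG}}/K_0$ for any $K_0 > 0$.

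Finally, I take an $\epsilon$-net $N$ of the Euclidean ball of radius $R$; a standard volumetric bound gives $|N| \leq (3R/\epsilon)^m \leq (nR)^{O(m)}$, using $G \geq 1/n$ so that $1/\sqrt{G} \leq \sqrt{n}$. A union bound over $N$ gives $\Pr[\exists u' \in N : K(u', G') \geq K_0] \leq (nR)^{O(m)}\, e^{2\sqrt{2nG}}/K_0$, and the Lipschitz step bounds $\max_{\|u\|_2 \leq R} K(u,G) \leq \max_{u' \in N} K(u', G')$. Setting $K_0 := (nR)^{Cm} e^{2\sqrt{2nG}}$ for a sufficiently large constant $C$ yields failure probability $n^{-\Omega(m)}$. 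The only delicate point is the calibration of $\epsilon$: too large, and the Lipschitz slackening inflates the exponent in $e^{2\sqrt{2nG}}$; too small, and $|N|$ exceeds $(nR)^{\Theta(m)}$. The scale $\epsilon \sim \sqrt{G/n}/(n\sqrt{m})$ sits exactly at the sweet spot where $\sqrt{nG}$ is $O(1)$-stable under perturbations of $u$ of size $\epsilon$.
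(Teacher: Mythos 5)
Your proof is correct and follows essentially the same route as the paper's: an $\epsilon$-net over the ball of radius $R$, a pointwise application of \Cref{lem:one-knapsack} plus Markov at each net point, a union bound, and a Lipschitz slackening of the knapsack budget to pass from a net point to an arbitrary $u$. The only cosmetic difference is that you calibrate the net scale adaptively as $\epsilon \sim \sqrt{G}/(n^{3/2}\sqrt{m})$ so the slackening costs exactly an $e^{2}$ factor, whereas the paper fixes $\epsilon = 1/n^{3}$ and absorbs the resulting $+1/n$ slack into the $(nR)^{\Theta(m)}$ prefactor; both uses of $G \geq 1/n$ serve the analogous purpose.
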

\begin{proof}
Let $K(u,G):=\{x \in \{0,1\}^n: \sum_{i=1}^n x_i|(A^Tu-c)_i| \leq G\}$.
For $i\in [n]$, let $N_i$ be an $\varepsilon$-net of $\{u\in\mathbb{R}^m:\|u\|_2\leq R \}$ for $\epsilon=\frac{1}{n^3}$. Note that we can choose $N$ with $|N|=O(R^{m}n^{3m})$. 

\Cref{lem:one-knapsack} implies that for any $u\in N'$, $\E[|K(u,G)|] \leq e^{2\sqrt{2 n G}}$.
So, by the Markov bound, we see:
\begin{align*}
\Pr[|K(u,G)| \geq n^m|N| e^{2\sqrt{2 n G}}]\leq \frac{1}{n^m|N|}.
\end{align*}
Taking the union bound over all $u'\in N$, we see:
\begin{align*}
\Pr[\exists u'\in N: |K(u,G)| \geq n^m|N| e^{2\sqrt{2 n G}}]\leq |N|\frac{1}{n^m|N|}=n^{-m}
\end{align*}
Now suppose that the event is true. Then for each $u$ with $\|u\|\leq R$ there will be a $u'\in N$ with $\|u-u'\|\leq \epsilon$. This implies that for all $x\in K(u,G)$ we have:
\begin{align*}
\sum_{i=1}^nx_i|(A^\T u' -c)_i| &\leq \sum_{i=1}^n\left(x_i|(A^\T u -c)_i| + x_i|(A^\T(u'-u))_i|\right)\leq \sum_{i=1}^nx_i|(A^\T u -c)_i| + \|A^\T(u'-u)\|_1\\
&\leq \sum_{i=1}^nx_i|(A^\T u -c)_i| + \sqrt{m}n\epsilon\leq G+ \frac{1}{n}.
\end{align*}
So:
\begin{align*}
\Pr[\exists u'\in N: |K(u,G+\frac1{n})| \geq n^m|N| e^{2\sqrt{2 n G}}]\leq n^{-m}.
\end{align*}
Setting $G':= G-\frac1n$, we see:
\begin{align*}
\Pr[\exists u'\in N: |K(u,G')| \geq n^m|N| e^{2\sqrt{2 n G'+2}}]\leq n^{-m}.
\end{align*}
This proves the lemma because $n^m|N| e^{2\sqrt{2 n G'+2}}\leq (nR)^{\Theta(m)}e^{2\sqrt{2 n G'}}$.
\end{proof}
	
We now have all the required ingredients to prove the bound on
Branch-and-Bound trees in the discrete case.

\begin{proof}[Proof of \Cref{cor:branch-and-bound}]
As in section~\ref{sec:gap-bounds}, in both the discrete centered and packing
case, we divide the constraint matrix $A$ by $k$. Thus, the entries of $A$
are either uniform in $\{0,\pm 1/k,\pm 2/k, \dots,1\}$ in the centered case
or in $\{1/k,2/k,\dots,1\}$ in the packing case, and hence all contained in
$[-1,1]$.

Let $K = \{x \in \{0,1\}^n: \sum_{i=1}^n x_i|(A^\T u^*-c)_i| \leq
\mathsf{IPGAP}(A,b,c)\}$. By \Cref{thm:bnb}, to bound the size of the
Branch-and-Bound tree, it suffices to prove a high probability upper bound on
$|K|$.

In the centered case, by \Cref{thm:ipgap_centered} we have that
$\mathsf{IPGAP}(A,b,c)$ is at most $G := \poly(m)(\log n)^2/n$ with
probability $1-n^{-\poly(m)}$, by \Cref{lem:solution-props} that $\|u^*\| \leq
R := 32$ with probability $1-e^{-\Omega(n)}$. Applying \Cref{lem:bnd-norm-bd}
with $G,R$ together with the union bound, we conclude that $|K| \leq
n^{\poly(m)}$ with probability $1-n^{-\Omega(m)}$, as needed.

In the packing case, by \Cref{thm:ipgap_packing} the integrality gap
$\mathsf{IPGAP}(A,b,c)$ is upper bound by $G := \exp(O(1/\beta))\poly(m)(\log
n)^2/n$ with probability $1-n^{-\poly(m)}$, and by
\Cref{lem:solution-props-packing} we have that $\|u^*\|_2 \leq \|u^*\|_1 \leq
R := O(1/\beta)$ with probability $1-e^{-\Omega(n)}$. Applying
\Cref{lem:bnd-norm-bd} with $G,R$ together with the union bound, we conclude
that $|K| \leq n^{\exp(O(1/\beta))\poly(m)}$ with probability
$1-n^{-\Omega(m)}$, as needed.
\end{proof}
 \section{Anti-Concentration Results} \label{sec:ac}
Throughout this section we use the notation,
$$d(x, \Z) := \min\limits_{z \in \Z} |x-z|.$$
Our goal in this section is to prove that Definition \ref{def:AC} is valid for a large family of distributions and prove Lemma \ref{lem:anti-concentration}.
\paragraph*{\bf Distributions with bounded densities: }
We begin with the following simple $1$-dimensional lemma.
\begin{lemma} \label{lem:1dlc}
	Let $X$ be a random variable with $\EE\left[X\right] = \mu$ and $\Var(X) = \sigma^2$. Suppose that $X$ has a density $\rho$, which satisfies,
	$$\rho(x) \leq \frac{C}{\sigma},$$
	for some $C > 0$. Then, for every $\epsilon > 0$, then for $\delta = \frac{\eps^2}{12C}$,
	$$\Pr\left(d(X, \Z) \geq \delta\min \left(1, \sigma\right)\right) \geq 1-\eps.$$
\end{lemma}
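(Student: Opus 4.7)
The plan is a standard Chebyshev-plus-density-bound argument. Set $\eta := \delta \min(1,\sigma)$, so the event to bound is $B := \{d(X,\Z) < \eta\} = \bigcup_{k\in\Z}[k-\eta, k+\eta]$, and we wish to show $\Pr(X \in B) \leq \eps$; we may assume $\eps \leq 1$, as the claim is vacuous otherwise.

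For any truncation radius $L > 0$, Chebyshev's inequality gives $\Pr(|X-\mu| > L) \leq \sigma^2/L^2$. Inside the window $[\mu-L,\mu+L]$ there are at most $2L+1$ integers, so the piece $B \cap [\mu-L,\mu+L]$ is a union of at most $2L+1$ intervals of length $2\eta$ and therefore has Lebesgue measure at most $(2L+1)\cdot 2\eta$. Combining this with the density upper bound $\rho \leq C/\sigma$ yields
\begin{equation*}
\Pr(X \in B) \;\leq\; (2L+1)\cdot \frac{2\eta C}{\sigma} \;+\; \frac{\sigma^2}{L^2}.
\end{equation*}

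From here I would split on whether $\sigma \geq 1$ (so $\eta = \delta$, and one picks $L = \sigma t$, making $\sigma^2/L^2 = 1/t^2$ and the density term $\leq (4t+2)\delta C$) or $\sigma < 1$ (so $\eta = \delta\sigma$, and one picks $L = t \geq 1$, making $\sigma^2/L^2 \leq 1/t^2$ and the density term again $\leq (4t+2)\delta C$). Both regimes collapse to the uniform estimate
\begin{equation*}
\Pr(X \in B) \;\leq\; (4t+2)\,\delta C \;+\; \frac{1}{t^2}.
\end{equation*}
Substituting $\delta C = \eps^2/12$ and choosing $t$ of order $\eps^{-2/3}$ to balance the two terms drives the right-hand side down to order $\eps^{4/3}$, which is at most $\eps$ for $\eps \in (0,1]$.

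The only technical subtlety is the case split on $\sigma$: the factor $\min(1,\sigma)$ in the definition of $\eta$ is precisely what makes the final estimate independent of $\sigma$, since after scaling $L$ with $\max(1,\sigma)$ the ``density contribution'' $\eta C/\sigma$ is dominated by $\delta C$ in both regimes. Extracting the exact constant $12$ in $\delta = \eps^2/(12C)$ is then a one-variable optimization of $(4t+2)\delta C + 1/t^2$ in $t$.
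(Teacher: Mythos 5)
Your decomposition — Chebyshev truncation to a window of radius $L$, then a Lebesgue-measure times density bound on the tube around $\Z$ — is the same strategy as the paper's, which fixes the radius at $L = 2\sigma/\eps$ rather than optimizing. Your handling of the two regimes via the $\min(1,\sigma)$ normalization is correct, and you rightly observe that both regimes collapse to the estimate $\Pr(X\in B) \leq (4t+2)\delta C + 1/t^2$.

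The only place to flag is your last sentence. With $\delta C = \eps^2/12$, minimizing $(4t+2)\delta C + 1/t^2$ over $t$ (the optimum is $t = 6^{1/3}\eps^{-2/3}$) gives
\begin{equation*}
\frac{3\,\eps^{4/3}}{6^{2/3}} + \frac{\eps^2}{6} \;\approx\; 0.91\,\eps^{4/3} + 0.17\,\eps^2,
\end{equation*}
which already exceeds $\eps$ for $\eps \gtrsim 0.85$. So the claim that this is ``at most $\eps$ for $\eps\in(0,1]$'' is not quite right, and the exact constant $12$ is \emph{not} recoverable from this one-variable optimization — you would need roughly $\delta = \eps^2/(16C)$ to cover the whole range $(0,1]$. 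For what it is worth, the paper's own write-up has a parallel slip: with the chosen $\delta$, the display $\frac{6}{\eps}\sigma \cdot 2\delta \cdot \frac{C}{\sigma}$ evaluates to $\eps$, not the asserted $\eps/2$, so the final chain $1 - \eps^2/4 - \eps/2 > 1 - \eps$ does not follow from the preceding estimate with those constants. Neither gap matters downstream, since the lemma is invoked in Lemma~\ref{lem:aclc} only with $\eps = 1/(2C) \leq 1/2$, where both your bound and the paper's close comfortably — but your write-up should either restrict to, say, $\eps \leq 1/2$ or carry a slightly smaller constant than $1/12$ rather than assert the $\eps^{4/3}$ term stays below $\eps$ all the way to $\eps=1$.
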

\begin{proof}
By Chebyshev's inequality $\Pr\left(|X - \mu| \geq \frac{2}{\eps}\sigma\right) \leq \frac{\eps^2}{4}.$
	Define $\sigma' = \min(1, \sigma)$ and note that if $\Z + [-\delta \sigma', \delta \sigma']:= \bigcap\limits_{z \in \Z} [z -\delta \sigma', z+\delta \sigma']$, then, for any $\delta  > 0$,
	$$\Pr\left(X \in \left[\mu -\frac{2}{\eps}\sigma, \mu + \frac{2}{\eps}\sigma\right] \bigcap \left(\Z + \left[-\delta\sigma', \delta\sigma'\right]\right)\right) \leq \sum\limits_{z\in \Z, |z-\mu|<\frac{2}{\epsilon}\sigma}\int\limits_{z - \delta\sigma'}^{z  + \delta\sigma'}\rho(x)dx.$$
	If $\sigma \geq 1$, since $\rho(x) \leq \frac{C}{\sigma}$,
	$$\sum\limits_{z\in \Z, |z-\mu|<\frac{2}{\eps}\sigma}\int\limits_{z - \delta}^{z  +\delta}\rho(x)dx \leq \frac{6}{\eps}\sigma\cdot 2\delta\cdot \frac{C}{\sigma}.$$
	We now choose $\delta = \frac{\eps^2}{12C}$, so the right hand side becomes smaller than $\frac{\eps}{2}$, and
	\begin{align*}
		\Pr&\left(d(X, \Z) \geq \delta\min \left(1, \sigma\right)\right)\\
		&\geq \Pr\left(|X - \mu| < \frac{\eps}{2}\sigma\right) - \Pr\left(X \in \left[\mu - \delta, \mu + \delta\right] \bigcap \left(\Z + \left[-\delta, \delta\right]\right)\right) \geq 1-\frac{\eps^2}{4} - \frac{\eps}{2} > 1 - \eps.
	\end{align*}
	If $\sigma <1$, then $\sigma' = \sigma$ and 
	$$\sum\limits_{z\in \Z, |z-\mu|<\frac{2}{\eps}\sigma}\int\limits_{z - \delta\sigma}^{z  +\delta\sigma}\rho(x)dx \leq \frac{6}{\eps}\cdot2\delta\sigma\cdot\frac{C}{\sigma}.$$
	We then arrive at the same conclusion.
\end{proof}
We now prove our anti-concentration result measures with an appropriate density bound.
\begin{lemma} \label{lem:aclc}
	Let $X$ be a random vector in $\R^m$ with $\Sigma := \mathrm{Cov}\left(X\right)$. For $\theta \in \R^m$ let $\rho_\theta$ stand for the density of $\langle X,\theta\rangle$. Assume that there is a constant $C > 1$, satisfying the following three conditions:
	\begin{itemize}
		\item For every $\theta \in \R^m$, $x \in \R$, $\rho_\theta(x) \leq \frac{C}{\sqrt{\mathrm{Var}(\langle X, \theta\rangle)}}.$
		\item For every $\nu \in \R^m$, $\Pr\left(\langle\nu, X\rangle \leq \langle\nu,\mu \rangle\right) \geq \frac{1}{C}.$
		\item $\|\Sigma\|_{\mathrm{op}}\|\Sigma^{-1}\|_{\mathrm{op}} \leq C$.
	\end{itemize} Then, for any $\theta,\nu \in \R^m$,
	\begin{align*}
		\Pr\left[d(\theta^\T X, \mathbb{Z}) \geq \frac{1}{48C^3}\min\left(1,\|\theta\|_\infty\sigma\right) \mid\langle\nu, X\rangle \leq \langle\nu,\mu \rangle \right]\geq \frac{1}{2C}, 
	\end{align*}
	where $\sigma:= \|\Sigma\|_{\mathrm{op}}.$
	In other words, $X$ satisfies \eqref{eq:anticoncentration} with $\sigma$ and  $\kappa = \frac{1}{48C^{3}}$.
\end{lemma}
Before proving the result, we just note that by Lemmas \ref{lem:grunbaum} and \ref{lem:logconcave-density-bounded}, Lemma \ref{lem:aclc} applies to isotropic logconcave distributions and hence proves the first half of Lemma \ref{lem:anti-concentration}.
\begin{proof}
Let us denote $\eta^2 := \Var(\theta^\T X)$ and observe
	$$\frac{\|\theta\|^2}{\|\Sigma^{-1}\|_\mathrm{op}}\leq \eta^2 \leq \|\theta\|^2\|\Sigma\|_{\mathrm{op}}.$$
	With this in mind, we will actually show the seemingly stronger result,
	$$\Pr\left[d(\theta^\T X, \mathbb{Z}) \geq \frac{\kappa}{C}\min\left(1,\sigma\right)\mid\langle\nu, X\rangle \leq \langle\nu,\mu \rangle\right]\geq \frac{\kappa}{C}.$$
	However, since the class of measures considered by the lemma is preserved under rotations this is an actual equivalent statement.
	
	By assumption, $\frac{\eta}{\sigma\|\theta\|} \geq \frac{1}{\sqrt{C}}$, and if we choose $\eps = \frac{1}{2C}$ in Lemma \ref{lem:1dlc}, then
	$$\Pr\left[d(\theta^\T X, \mathbb{Z}) \geq \frac{\delta}{\sqrt{C}}\min\left(1,\|\theta\|\sigma\right)\right]\geq 1 - \frac{1}{2C},$$
	where $\delta = \frac{1}{48C^3}$
	To complete the proof, by assumption $\nu$,
	$$\Pr\left(\langle\nu, X\rangle \leq \langle\nu,\mu \rangle\right) \geq \frac{1}{C}.$$
	Thus, with a union bound
	\begin{align*}
		\Pr&\left[d(\theta^\T X, \mathbb{Z}) \geq \frac{\delta}{\sqrt{C}}\min\left(1,\|\theta\|_\infty\sigma\right) \mid\langle\nu, X\rangle \leq \langle\nu,\mu \rangle \right]\\
		&= \frac{\Pr\left[d(\theta^\T X, \mathbb{Z}) \geq \frac{\delta}{\sqrt{C}}\min\left(1,\|\theta\|_\infty\sigma\right) \text{ and }\langle\nu, X\rangle \leq \langle\nu,\mu \rangle \right]}{\Pr\left(\langle\nu, X\rangle \leq \langle\nu,\mu \rangle\right)}\\
		&\geq \Pr\left[d(\theta^\T X, \mathbb{Z}) \geq \frac{\delta}{\sqrt{C}}\min\left(1,\|\theta\|_\infty\sigma\right)\right] + \Pr\left(\langle\nu, X\rangle \leq \langle\nu,\mu \rangle\right) - 1\\
		&\geq \left(\frac{1}{2C} + 1 - \frac{1}{C}\right) + \frac{1}{C} - 1 \geq \frac{1}{2C}.
	\end{align*}
\end{proof}
\paragraph*{\bf Discrete distributions:}
We now prove anti-concentration results for discrete distributions supported on $\Z^m$.
Our result pertains to random variables which are uniform on intervals of length at least $3$.
\begin{lemma} \label{lem:acdiscinterval}
	Let $X = (X_1,\dots,X_m)$ be a random vector in $\mathbb{R}^m$, such that $\{X_i\}_{i=1}^m$ are \emph{i.i.d.} uniformly on $\{a, a+1,\dots,a+k\}$, for some $a, k \in \mathbb{N}$, with $k > 1.$ Set $\mu = \mathbb{E}[X_1]$ and $\sigma = \sqrt{\mathrm{Var}(X_1)}$. Then, for every $\theta \in [-\frac{1}{2}, \frac{1}{2}]^m$, and every $\nu \in \mathbb{R}^m$,
	$$\Pr\left(d(\theta^\T X, \mathbb{Z}) \geq \frac{1}{10}\min\left(\|\theta\|_\infty\sigma,1\right)\mid \langle\nu,X \rangle \leq \langle\nu, \mu\mathbf{1} \rangle\right) \geq \frac{1}{40}.$$
	In other words, $X$ satisfies \eqref{eq:anticoncentration} with $\sigma$ and $\kappa = \frac{1}{40}$.
\end{lemma}
\begin{proof}
	Observe that, as $X$ is  symmetric around its mean,
	\begin{align*}
		\Pr&\left(d(\theta^\T X, \mathbb{Z}) \geq \frac{1}{10}\min\left(\|\theta\|_\infty\sigma,1\right)\mid \langle\nu,X \rangle \leq \langle\nu, \mu\mathbf{1} \rangle \right) \\
		&= \frac{\Pr\left(d(\theta^\T X, \mathbb{Z}) \geq \frac{1}{10}\min\left(\|\theta\|_\infty\sigma,1\right)\text{ and } \langle\nu,X \rangle \leq \langle\nu, \mu\mathbf{1} \rangle \right)}{\Pr\left(\langle\nu,X \rangle \leq \langle\nu, \mu\mathbf{1} \rangle\right)}\\
		&\geq \Pr\left(d(\theta^\T X, \mathbb{Z}) \geq \frac{1}{10}\min\left(\|\theta\|_\infty\sigma,1\right)\text{ and } \langle\nu,X \rangle \leq \langle\nu, \mu\mathbf{1} \rangle \right),
	\end{align*}
	With no loss of generality, let us assume $|\theta_m| = \|\theta\|_\infty$ and consider the event,
	$$E = \left\{\sum\limits_{i=1}^{m-1} \nu_iX_i \leq \mu\sum\limits_{i=1}^{m-1}\nu_i \text{ and } \nu_mX_m\leq \mu\nu_m\right\}.$$
	Clearly, $E\subset \{\langle\nu,X \rangle \leq \langle\nu, \mu\mathbf{1} \rangle\}$, and by symmetry and independence, $\Pr\left(E\right)\geq\frac{1}{4}$.
	With the previous display,
	\begin{align*}
		\Pr&\left(d(\theta^\T X, \Z) \geq \frac{1}{10}\min\left(\|\theta\|_\infty\sigma,1\right)\mid \langle\nu,X\rangle \leq \langle\nu, \mu \rangle \right)\\
		&\geq\Pr\left(d(\theta^\T X, \Z) \geq \frac{1}{10}\min\left(\|\theta\|_\infty\sigma,1\right),\ X \in E \right)\\
		&=\frac{1}{4}\Pr\left(d(\theta^\T X,\Z) \geq \frac{1}{10}\min\left(\|\theta\|_\infty\sigma,1\right)\mid X \in E \right).
	\end{align*}
	Now, denote $r:= \sum\limits_{i=1}^{m-1}\theta_iX_i,$ and rewrite,
	$$d(\theta^\T X, \Z) = d(\theta_mX_m, (\mathbb{Z}-r)).$$
	We observe that under the conditioning on $E$, depending on $\sign(\nu_m)$, $\theta_mX_m$ is either uniform on $\{\theta_ma,\theta_m(a+1)\dots \theta_m\lfloor \mu \rfloor\}$, or on $\{\theta_m\lceil \mu \rceil \dots \theta_m(a + k)\}$.
	We are then interested in the size of the set 
	$$F = \{\theta_m\cdot x : d(\theta_mx, (\mathbb{Z}-r)) \geq \frac{1}{10}\min\left(\theta_m\sigma,1\right) \text{ and } x \in \mathrm{support}(X_m|E)\}.$$
	Since $|\theta_m| \leq \frac{1}{2}$, $\sigma = \sqrt{\frac{k^2 + 2k}{12}}$ and $|\mathrm{Support}(X_m|E)| \geq \lceil\frac{k + 1}{2} \rceil$, it is not hard to see that, as long as $k \geq 2$,
	\begin{equation} \label{eq:support}
		\frac{|F|}{|\mathrm{Support}(X_m|E)|} \geq \frac{1}{10}.
	\end{equation}
	This can be done by inspecting the trajectory $a\theta_m + \{0, \theta_m, 2\theta_m,... , \lceil\frac{k + 1}{2} \rceil\theta_m\} \mod 1$ and noting that at most a $\frac{9}{10}$ fraction of the set $\{0, \theta_m, 2\theta_m,... , \lceil\frac{k + 1}{2} \rceil\theta_m\} \mod 1$ can occupy \emph{any} interval of length $\frac{1}{5}\min\left(\theta_m\sigma,1\right)$.
	
	Indeed, let $I$ be such an interval. If $\sigma_m > \frac{1}{5}\min\left(\theta_m\sigma,1\right)$, then it cannot be the case that  for some $j$, both $j\theta_m, (j+1)\theta_m \in I +\mathbb{Z}$. On the other hand, if $\theta_m \leq \frac{1}{5}\min\left(\theta_m\sigma,1\right)$, then if, for some $j$, $j\theta_m \in I$, necessarily, $(j + \min(\frac{\sigma}{5}, \frac{1}{5\theta_m})\theta_m \notin I$ and \eqref{eq:support} follows since $\min(\frac{\sigma}{5}, \frac{1}{5\theta_m}) \leq \frac{4}{5}k$.
Thus, by invoking the law of total probability on all possible values of $r$,
	$$\Pr\left(d(\theta^\T X, \Z) \geq \frac{1}{10}\min\left(\theta_m\sigma,1\right)\mid X \in E \right) \geq \frac{|F|}{|\mathrm{Support}(X_m|E)|} \geq \frac{1}{10}.$$
\end{proof}
\printbibliography
\appendix
\section{Preliminaries}
\label{sec:prelim}
We begin this section by introducing some notation, to be used throughout the paper.
If $(x_1,\dots,x_m) \in \R^m$ and $p \geq 1$, the $p$-norm is defined by,
$$\|x\|_p:= \left(\sum\limits_{i=1}^m|x_i|^p\right)^{\frac{1}{p}}.$$
When $p=2$, i.e. the Euclidean norm, we will sometimes omit the subscript, so $\|x\| = \|x\|_2$. We interpret $p = \infty$ in the limiting sense,
$$\|x\|_\infty := \max\limits_{1\leq i \leq n} |x_i|.$$
We denote the positive part of $x$ as, $x^+:= (\max(x_1,0),\dots,\max(x_m,0))$ and the negative part $x^-:= (\max(-x_1,0),\dots,\max(-x_m,0))$. In this way, $x = x^+-x^{-}$. The all-ones vector is denoted ${\bf 1}_m:=(1,\dots,1)$. We will sometimes omit the subscript, when the dimension is clear from the context. If $S$ is a subset of indices, we will write ${\bf 1}_S$ for a vector such that $({\bf 1}_S))_i = 1$ if $i \in S$ and $0$ otherwise.
If $a$ and $b$ are quantities that depend on the problem's parameters we will write $a = O(b)$ (resp. $a = \Omega(b)$) to mean $a \leq Cb$, (resp. $a \geq CB$) for some numerical constant $C > 0$. We also write $a \ll b$ or $a = o(b)$ (resp. $a \gg b$ or $a = \omega(b)$) to mean $\lim\limits_{a\to \infty} \frac{a}{b} = 0$ (resp. $\lim\limits_{a\to \infty} \frac{b}{a} = 0$ ).
The identity matrix in $\R^m$ is denoted by $\mathrm{I}_m$.

\subsection{Fourier analysis}
Our main tool for proving the discrepancy result is Fourier analysis and we review here the necessary details. 
Fix $X\sim \mathcal{D}$, a random vector in $\R^m$. The Fourier transform of $X$ (sometimes also called the characteristic function) is the complex-valued function defined by $\hat{X}(\theta):=\Exp[\exp(2\pi i \langle X,\theta\rangle)].$

To understand the natural domain for $\theta$, we first define the (dual) period of $\mathcal{D}$. For this, choose an arbitrary $a\in \mathrm{support}(\mathcal{D})$ and denote,
\begin{equation} \label{eq:dualfunddomain}
\mathrm{period}(\mathcal{D}) := \{v\in \mathbb{R}^m: \langle v, w - a\rangle\in \mathbb{Z},\ \forall w\in \mathrm{support}(\mathcal{D})\}.
\end{equation}
The definition of $\mathrm{period}(\mathcal{D})$ does not depend on the choice of $a$. It is readily seen that when $\mathcal{D}$ is absolutely continuous with respect to the Lebesgue measure $\mathrm{period}(\mathcal{D}) =\{0\}$, while $\mathrm{period}(\mathcal{D}) =\Z^m$, when $\mathrm{support}(\mathcal{D}) \subset \Z^m$. These are the cases on which we focus.
Using the period, we define the fundamental domain of $\mathcal{D}$, in Fourier space (where we suppress the dependence on $\mathcal{D}$),
\begin{equation} \label{eq:funddomain}
V:=\{\theta\in \mathbb{R}^m: \|\theta\|\leq \inf_{0\neq w\in \mathrm{period}(\mathcal{D})}\|\theta-w\|\}.
\end{equation}
Observe that if $\mathcal{D}$ is absolutely continuous with respect to the Lebesgue measure, then $V = \R^m$ and when $\mathrm{support}(\mathcal{D})\subset \Z^m$,  $V=[-\frac12,\frac12]^m$. 

The connection between $X$ and its Fourier transform comes from the Fourier inversion formula \cite[Theorem 1.20]{SW71}:
\begin{theorem}[Fourier inversion formula] \label{thm:fourier}
	Suppose that either $\mathcal{D}$ is absolutely continuous, or $\mathrm{support}(\mathcal{D})\subset \Z^m$. Then, for $t\in \mathrm{support}(\mathcal{D})$:
	\begin{align*}
	\Pr[X=t] = \int_{\theta\in V}\hat{X}(\theta)\exp(-2\pi i\langle \theta, t \rangle )d\theta
	\end{align*}
	If $X$ is absolutely continuous, we interpret $\Pr[X=t]$ as the density of $X$ at $t$. 
\end{theorem}
Another desirable property of the Fourier transform is that it is particularly amenable to convolutions (this is clear from the exponential representation but see \cite[Theorem 3.18]{SW71}).
\begin{theorem}[Multiplication-convolution theorem] \label{thm:multconv}
	Let $X$ and $Y$ be two independent random vectors. Then,
	$$\widehat{\left(X + Y\right)}(\theta) = \hat{X}(\theta)\hat{Y}(\theta).$$
\end{theorem}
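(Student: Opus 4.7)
The plan is a direct calculation from the definition of the Fourier transform, using independence at the single nontrivial step. Starting from $\widehat{(X+Y)}(\theta) = \E[\exp(2\pi i \langle X+Y,\theta\rangle)]$, linearity of the inner product factors the exponential as $\exp(2\pi i \langle X,\theta\rangle)\cdot \exp(2\pi i \langle Y,\theta\rangle)$. This reduces the claim to the identity $\E[f(X)g(Y)] = \E[f(X)]\,\E[g(Y)]$ for $f(x) = \exp(2\pi i \langle x,\theta\rangle)$ and $g(y)=\exp(2\pi i \langle y,\theta\rangle)$, which is precisely the standard factorization of expectations under independence. Substituting the definitions of $\hat{X}(\theta)$ and $\hat{Y}(\theta)$ then finishes the computation.

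The only subtlety is that $f$ and $g$ are complex-valued, whereas the textbook independence factorization is usually stated for real-valued measurable functions. I would handle this by decomposing each exponential into real and imaginary parts, $\cos(2\pi \langle \cdot,\theta\rangle)$ and $\sin(2\pi \langle \cdot,\theta\rangle)$; each is a bounded (hence integrable) real-valued Borel function of $X$ or $Y$, so the real-valued factorization applies to each of the four cross terms. Reassembling the four products into a single complex product gives $\hat{X}(\theta)\hat{Y}(\theta)$. There is no real obstacle here: all expectations exist because the integrands have modulus $1$, independence is assumed in the hypothesis, and the argument is complete in a few lines. The only thing to be mildly careful about is that the statement is phrased for arbitrary distributions on $\R^m$, so one should not invoke any density-based manipulation; the argument above uses only the abstract $\sigma$-algebra formulation of independence and is therefore distribution-agnostic.
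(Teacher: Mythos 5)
Your proof is correct and is exactly the argument the paper alludes to when it says the result ``is clear from the exponential representation'' (the paper itself gives no proof, only a citation to \cite{SW71}). The careful handling of complex-valued integrands via real and imaginary parts is a nice touch, though this subtlety is routine enough that most sources take it for granted.
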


\subsection{Probability Distributions}

Let $X \in \R^m$ be a random vector distributed according to a probability
measure $\nu$ on $\R^m$. We will use $f_X$ to refer to the probability density function of $X$. We define the mean $\mathrm{Mean}(\nu):=\E[X] \in
\R^m$ and covariance matrix by $\Cov(\nu) := \Cov(X) := \E[X X^\T] -
\E[X]\E[X]^\T \succeq 0$. If $X \in \R$ is a real random variable, we use the
notation $\Var[X]$ to write the variance instead of $\Cov(X)$. We say that
$X$, or its law $\nu$, is isotropic if and $\E[X] = 0$ and $\Cov(X)=\mathrm{I}_m$. 

\begin{proposition} 
\label{prop:pos-part}
Let $X \in \R^n$ satisfy $\E[X] = 0$. Then $\E[X^+]=\E[|X|]/2$.
\end{proposition}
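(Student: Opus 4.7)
The plan is to reduce the vector identity to a coordinate-wise statement and then apply the elementary decomposition $x = x^+ + x^-$ from the notation section, where $x^+ \geq 0$ and $x^- \leq 0$ component-wise.

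First I would note that both sides of the claimed equality are vectors in $\R^n$, and by linearity of expectation it suffices to prove the $i$-th coordinate identity $\E[(X_i)^+] = \E[|X_i|]/2$ for each $i$. So without loss of generality, reduce to the scalar case where $X$ is a real random variable with $\E[X] = 0$.

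In the scalar case, I would use the decomposition $X = X^+ + X^-$, where $X^+ = \max(X,0) \geq 0$ and $X^- = -\max(-X,0) \leq 0$, as defined in the preliminaries. Taking expectations and using $\E[X] = 0$ yields
\begin{equation*}
\E[X^+] = -\E[X^-] = \E[-X^-] = \E[|X^-|].
\end{equation*}
Since $|X| = X^+ + |X^-| = X^+ - X^-$ pointwise (as $X^+$ and $-X^-$ have disjoint support), taking expectations and applying the previous identity gives
\begin{equation*}
\E[|X|] = \E[X^+] + \E[|X^-|] = 2\,\E[X^+],
\end{equation*}
which rearranges to the claimed identity $\E[X^+] = \E[|X|]/2$.

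There is no substantive obstacle here: the statement is essentially a one-line manipulation once one recognizes that the mean-zero hypothesis balances the positive and negative parts. The only thing to be slightly careful about is keeping the vector-vs.-scalar notation straight, since the notation section defines $x^+$ and $x^-$ componentwise for vectors, so the identity holds entrywise and therefore as a vector equation.
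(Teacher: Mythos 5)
Your proof is correct and follows essentially the same route as the paper: decompose $X$ into positive and negative parts, use $\E[X]=0$ to balance them, and add to get $\E[|X|]$. One small point in your favour: you stick consistently to the paper's notation-section convention $X^- \leq 0$, whereas the paper's own one-line proof writes $X = X^+ - X^-$ and $|X| = X^+ + X^-$, which implicitly treats $X^-$ as the nonnegative quantity $\max(-X,0)$, in conflict with the convention set out in the preliminaries.
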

\begin{proof}
Note that $0=\E[X]=\E[X^+-X^-] \Rightarrow \E[X^+]=\E[X^-]$. Thus,
$\E[|X|] = \E[X^+] + \E[X^-] = 2\E[X^+]$, as needed. 
\end{proof}

Let $X_1,\dots,X_n$ be independent $\{0,1\}$ random variables with $\mu = \E[\sum_{i=1}^n X_i]$. Then, the Chernoff bound gives \cite[Corollary 1.10]{doerr},
\begin{align}
\Prob[
\sum_{i=1}^n X_i \leq \mu(1-\epsilon)] &\leq e^{-\frac{\epsilon^2 \mu}{2}}, \epsilon \in [0,1]. \label{eq:chernoff} \\
\Prob[\sum_{i=1}^n X_i \geq \mu(1+\epsilon)] &\leq e^{-\frac{\epsilon^2 \mu}{3}}, \epsilon \in [0,1]. \nonumber
\end{align}
A more refined version is given by Azuma's inequality which allows the random variables to admit some mild dependencies. Let $X_1,\dots,X_n$ be $\{0,1\}$ random variables with $\mu = \sum_{i=1}^n \E[X_i|X_1,\dots,X_{i-1}]$. Then,
\begin{align}
	\Prob[
	\sum_{i=1}^n X_i \geq (1+\epsilon)\mu] &\leq e^{-\frac{\epsilon^2 \mu^2}{2n}}, \epsilon \in [0,1]. \label{eq:azuma}
\end{align}
To see this bound, apply \cite[Theorem 1.10.30]{doerr} to the martingale $S_i := \sum\limits_{j=1}^i X_j - \E[X_j|X_1,\dots,X_{j-1}]$.
\begin{lemma}
	If the density function $f_X$ of $X$ is bounded from above by $M$, then:
$$
\Var(X)\geq \frac{1}{12M^2}.
$$
\label{lem:bound_pdf_variance}
\end{lemma}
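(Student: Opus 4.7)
\medskip

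\noindent\textbf{Proof plan.} The plan is to reduce the claim to the one-dimensional ``bathtub principle'' observation that, among all probability densities on $\R$ bounded pointwise by $M$, the one whose mass is most concentrated around any point $c$ is the uniform density on an interval of length $1/M$ centered at $c$. Since variance is translation invariant, it suffices to give, for any $c \in \R$, a lower bound on the second moment $\E[(X-c)^2]$ that does not depend on $c$, and then specialize $c = \E[X]$.

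\medskip

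\noindent\textbf{Main steps.} First, I would use the layer-cake/tail representation
\[
\E[(X-c)^2] \;=\; \int_0^{\infty} 2a\,\Prob[\,|X-c| > a\,]\,da,
\]
which follows from Fubini's theorem. Second, I would exploit the assumed upper bound $f_X \leq M$ to obtain the crude but sharp inequality
\[
\Prob[\,|X-c| \leq a\,] \;=\; \int_{c-a}^{c+a} f_X(x)\,dx \;\leq\; 2aM,
\]
valid for every $a \geq 0$. Consequently $\Prob[\,|X-c| > a\,] \geq 1 - 2aM$, which is a nontrivial lower bound exactly on the range $a \in [0, 1/(2M)]$. Third, I would restrict the tail integral to this range and compute
\[
\E[(X-c)^2] \;\geq\; \int_0^{1/(2M)} 2a\,(1-2aM)\,da \;=\; \frac{1}{4M^2} - \frac{1}{6M^2} \;=\; \frac{1}{12M^2}.
\]
Finally, choosing $c = \E[X]$ yields $\Var(X) = \E[(X-\E[X])^2] \geq \tfrac{1}{12M^2}$, as required.

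\medskip

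\noindent\textbf{Where the difficulty lies.} There is essentially no obstacle: every step is elementary once one recognizes that the extremal density minimizing variance under a pointwise bound is the uniform distribution on an interval of length $1/M$ (which achieves variance exactly $1/(12M^2)$, so the inequality is tight). The only minor point worth being careful about is that the tail bound $\Prob[|X-c| > a] \geq 1 - 2aM$ can become negative for large $a$, so one must truncate the integration at $a = 1/(2M)$ rather than integrating over all of $[0,\infty)$; otherwise the bound is weakened. Apart from this, the argument is just a one-line tail-formula computation combined with the pointwise density bound.
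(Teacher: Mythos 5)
Your proof is correct, and it is actually more rigorous than the paper's. The paper's argument simply asserts that the uniform density on $[-\tfrac{1}{2M},\tfrac{1}{2M}]$ is the unique minimizer of $\Var(X)$ subject to $f_X \leq M$ and $\E[X]=0$, and then computes the variance of that density; it gives no justification for why that density is extremal. Your approach supplies that missing justification in an elementary, self-contained way: the layer-cake identity $\E[(X-c)^2] = \int_0^\infty 2a\,\Pr[|X-c|>a]\,da$ together with the pointwise bound $\Pr[|X-c|\leq a] \leq 2aM$ directly yields $\E[(X-c)^2] \geq \int_0^{1/(2M)} 2a(1-2aM)\,da = \tfrac{1}{12M^2}$, and choosing $c=\E[X]$ finishes. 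The two proofs are the same in spirit (both are the bathtub principle, and both identify the uniform density as the tight case), but yours is a complete argument whereas the paper's reads more as a heuristic; yours also proves the slightly stronger statement that $\E[(X-c)^2] \geq \tfrac{1}{12M^2}$ for every center $c$, not just $c=\E[X]$.
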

\begin{proof}
If we want to minimize $\Var(X)=\int_{-\infty}^\infty t^2f_X(t)dt$ under the conditions $f_X\leq M$ and $\Exp[X]=0$, then the the unique minimizer is $f_x=M\cdot \mathbf{1}_{[-\frac{1}{2M},\frac{1}{2M}]}$. We then see,
\begin{align*}
\Var(X)=\int_{-\frac{1}{2M}}^{\frac{1}{2M}} t^2 \cdot Mdt=\left[\frac13 M\cdot  t^3\right]^{\frac{1}{2M}}_{\frac{1}{-2M}}=\frac{1}{12M^2}
\end{align*}
\end{proof}

\subsection{Gaussian and Sub-Gaussian Random Variables}
\label{sec:subg}

If $\mu \in \R^m$ and $\Sigma \succ 0$ is an $m \times m$ positive-definite
matrix, we denote by $\mathcal{N}(\mu, \Sigma)$, the law of the Gaussian with
mean $\mu$ and covariance $\Sigma$. The probability density function of
$\mathcal{N}(\mu,\Sigma)$ is given by $\frac{1}{\sqrt{2\pi}^m
\det(\Sigma)^{1/2}} e^{-\frac{1}{2} (x-\mu)^\T \Sigma^{-1} (x-\mu)}$, $\forall
x \in \R^n$. 

The following is a basic concentration
fact for the norm of the standard Gaussian (see \cite[Lemma
1]{laurent2000adapative}, for example).
\begin{lemma}\label{lem:chisquareconc}
Let $G \sim \mathcal{N}(0,\mathrm{I}_m)$ and let $x \geq 7m$. Then,
$$\Pr\left(\|G\|^2 \geq x\right) \leq e^{-\frac{x}{3}}.$$
\end{lemma}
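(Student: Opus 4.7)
My plan is to apply the standard Chernoff bound, exploiting the explicit moment generating function of the $\chi^2_m$ distribution. Concretely, I would first derive
\[
\E[e^{t\|G\|^2}] \;=\; (1-2t)^{-m/2}, \qquad t \in [0,1/2),
\]
using independence of the coordinates of $G$ together with the one-dimensional identity $\E[e^{tG_i^2}] = (1-2t)^{-1/2}$, which follows from completing the square in the standard Gaussian density.

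I would then apply Markov's inequality to the nonnegative random variable $e^{t\|G\|^2}$ to obtain, for every $t \in (0, 1/2)$,
\[
\Pr[\|G\|^2 \geq x] \;\leq\; e^{-tx}(1-2t)^{-m/2}.
\]
The natural next step is to optimize over $t$. Setting the derivative of the log of the right hand side to zero gives the minimizer $t^\star = (x-m)/(2x)$, which lies in $(0,1/2)$ for $x > m$, and yields the classical tail estimate
\[
\Pr[\|G\|^2 \geq x] \;\leq\; (x/m)^{m/2}\, e^{-(x-m)/2}.
\]

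Finally, I would verify that the resulting exponent $(m/2)\ln(x/m) - (x-m)/2$ is at most $-x/3$ under the hypothesis $x \geq 7m$. Setting $y := x/m$, this reduces to the inequality $3(1+\ln y) \leq y$ for $y \geq 7$, which can be checked by monotonicity of $y - 3\ln y$ (its derivative $1 - 3/y$ is positive for $y>3$) together with evaluation at the boundary.

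The main obstacle here is purely arithmetic: pinning down the numerical constants so that the conversion from the sharpened Chernoff bound to the clean form $e^{-x/3}$ goes through at the stated threshold $7m$. The optimized exponent is tight, so the boundary check in the log-linear inequality is delicate; if it does not close off cleanly with these exact constants, an alternative is to avoid optimizing $t$ and instead use a fixed suboptimal choice (for instance $t$ close to $1/2$), then use the hypothesis $m \leq x/7$ to absorb the factor $(1-2t)^{-m/2} = e^{(m/2)\ln(1/(1-2t))}$ into the exponential $e^{-tx}$, yielding the required bound at the price of slightly worse universal constants.
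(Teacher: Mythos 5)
Your strategy is the natural and standard one (Chernoff on the $\chi^2_m$ MGF), and the first three steps are all correct: the MGF identity, the Markov step, the optimizer $t^\star=(x-m)/(2x)$, and the resulting bound $(x/m)^{m/2}e^{-(x-m)/2}$. The gap is in the final reduction. Setting $y=x/m$, you correctly arrive at the target inequality $3(1+\ln y)\leq y$, but this \emph{fails} at the boundary $y=7$: $3(1+\ln 7)\approx 8.84>7$. The function $y-3\ln y-3$ only becomes nonnegative around $y\approx 9.9$, so the boundary evaluation does not close. Your proposed fallback of using a fixed suboptimal $t$ cannot rescue this either: any fixed $t$ gives a bound that is at least the optimized Chernoff bound, and the optimized exponent $\tfrac{x-m}{2}-\tfrac{m}{2}\ln(x/m)$ at $x=7m$ equals $\bigl(3-\tfrac{\ln 7}{2}\bigr)m\approx 2.027m$, which is strictly smaller than the claimed exponent $x/3=7m/3\approx 2.333m$. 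No Chernoff-style argument can produce the constant $1/3$ at the threshold $x=7m$.

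In fact the issue is not just in your verification: since the Chernoff exponent is the exact exponential rate (Cram\'er's theorem), the lemma as stated is false for large $m$ at $x=7m$. A concrete instance is $m=50$, $x=350$: a saddle-point (or exact incomplete-gamma) computation gives $\Pr(\chi^2_{50}\geq 350)\approx e^{-105.7}$, while $e^{-x/3}=e^{-116.7}$, so the claimed bound is violated. The statement would be correct under a larger threshold (roughly $x\geq 10m$ for the rate $1/3$) or with a smaller rate constant (e.g.\ $e^{-x/4}$ does hold for $x\geq 7m$ and follows from your Chernoff bound). The paper itself gives no proof and simply cites Laurent–Massart; note that the Laurent–Massart inequality $\Pr(\chi^2_m\geq m+2\sqrt{mu}+2u)\leq e^{-u}$ only yields $e^{-x/3}$ once $x\gtrsim (9+6\sqrt2)m\approx 17.5m$, which is even weaker than what your direct Chernoff computation gives, so the paper's constants appear to be off as well. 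The obstacle you flagged as ``delicate arithmetic'' is therefore a real obstruction, not a bookkeeping nuisance.
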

A random variable $Y \in \R$ is $\sigma$-sub-Gaussian if for all $\lambda \in \R$, we have
\begin{equation}
\label{eq:gauss-mgf}
\E[e^{\lambda Y}] \leq e^{\sigma^2\lambda^2/2}.
\end{equation}
A standard normal random variable $X \sim \mathcal{N}(0,1)$ is
$1$-sub-Gaussian. If variables $Y_1,\dots,Y_k \in \R$ are independent and
respectively $\sigma_i$-sub-Gaussian, $i \in [k]$, then $\sum_{i=1}^k Y_i$ is
$\sqrt{\sum_{i=1}^k \sigma_i^2}$-sub-Gaussian. 

\noindent For a $\sigma$-sub-Gaussian random variable $Y \in \R$ we have the
following standard tail bound \cite[Proposition 2.5.2]{vershynin2018high}:
\begin{equation}
\label{eq:gauss-tail}
\max \{\Prob[Y \leq -\sigma s], \Prob[Y \geq \sigma s]\} \leq e^{-\frac{s^2}{2}}, s \geq 0.
\end{equation}

The following standard lemma shows that bounded random variables are
sub-Gaussian.

\begin{lemma}
\label{lem:disc-sub} Let $X \in [-1,1]$ be a mean-zero random variable. Then $X$ is $1$-sub-Gaussian.
\end{lemma}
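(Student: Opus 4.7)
The plan is to use the standard Hoeffding-style argument, exploiting convexity plus a Taylor-series comparison. First I would fix an arbitrary $\lambda \in \R$ and use the convexity of the map $x \mapsto e^{\lambda x}$. Since any $x \in [-1,1]$ can be written as a convex combination of the endpoints, namely $x = \frac{1-x}{2}(-1) + \frac{1+x}{2}(1)$, convexity yields the pointwise bound
\[
e^{\lambda x} \;\leq\; \frac{1-x}{2}\,e^{-\lambda} + \frac{1+x}{2}\,e^{\lambda},\qquad x\in[-1,1].
\]

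Next I would take expectations on both sides. Using that $X$ is supported in $[-1,1]$ and has $\E[X]=0$, the $x$-dependent terms cancel, leaving
\[
\E[e^{\lambda X}] \;\leq\; \frac{e^{-\lambda}+e^{\lambda}}{2} \;=\; \cosh(\lambda).
\]
So the whole task reduces to proving the scalar inequality $\cosh(\lambda)\leq e^{\lambda^2/2}$ for all $\lambda\in\R$. This is purely deterministic and can be verified by comparing Taylor expansions:
\[
\cosh(\lambda) = \sum_{k\geq 0}\frac{\lambda^{2k}}{(2k)!}, \qquad e^{\lambda^2/2} = \sum_{k\geq 0}\frac{\lambda^{2k}}{2^k\,k!},
\]
so it suffices to show $(2k)! \geq 2^k k!$ for every $k\geq 0$. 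Writing $(2k)! = [2\cdot 4\cdots (2k)]\cdot[1\cdot 3\cdots (2k-1)] = 2^k k!\cdot (2k-1)!!$ makes this immediate, since $(2k-1)!! \geq 1$. Combining with the previous step gives $\E[e^{\lambda X}] \leq e^{\lambda^2/2}$, which is exactly the sub-Gaussian condition~\eqref{eq:gauss-mgf} with $\sigma = 1$.

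There is essentially no obstacle here; the only mild point of care is to make sure the convexity bound is applied pointwise on the entire support of $X$ before passing to expectations, and to verify the Taylor-coefficient inequality for all $k$ including $k=0$ (where both sides equal $1$). The argument is tight in the sense that the constant $1$ in front of $\lambda^2/2$ is the best one obtainable from this convexity approach applied to the interval $[-1,1]$, which is consistent with Hoeffding's lemma applied with $b-a=2$.
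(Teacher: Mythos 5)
Your proof is correct and follows essentially the same route as the paper's: the convexity bound $e^{\lambda x}\leq\frac{1-x}{2}e^{-\lambda}+\frac{1+x}{2}e^{\lambda}$, taking expectations with $\E[X]=0$ to obtain $\cosh\lambda$, and a term-by-term Taylor comparison to show $\cosh\lambda\leq e^{\lambda^2/2}$. The only difference is cosmetic — you spell out the double-factorial justification of $(2k)!\geq 2^k k!$, which the paper leaves implicit.
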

\begin{proof}
Let $\phi(x) := e^{\lambda x}$ for $\lambda \in \R$. By convexity of $\phi$,
note that for $x \in [-1,1]$, $\phi(x) \leq \frac{1-x}{2} \phi(-1) +
\frac{1+x}{2} \phi(1)$. Therefore, \begin{align*} \E[\phi(X)] &\leq
\E[\frac{1-X}{2}\phi(-1) + \frac{1+X}{2}\phi(1)] =
\frac{1}{2}(\phi(-1)+\phi(1)) = \frac{1}{2}(e^{-\lambda} + e^{\lambda}) \\ &=
\sum_{i=0}^\infty \frac{\lambda^{2i}}{(2i)!} \leq \sum_{i=0}^\infty
\frac{(\lambda^2/2)^i}{i!}  = e^{\lambda^2/2}, \text{ as needed.}
\end{align*}
\end{proof}
We also need the following fact about truncated sub-Gaussian random variables, which is a
slight generalization of \cite[Lemma 7]{BDHT22}:
\begin{lemma}
\label{lem:subg-pos}
Let $X \in \R$ be $1$-sub-Gaussian. Then $\E[X^+] \leq 1/2$ and $X^+-\E[X^+]$ is $\sqrt{2}$-sub-Gaussian.  
\end{lemma}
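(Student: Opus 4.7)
The plan is to handle the two claims separately: the mean bound follows from extracting moment information from the MGF inequality, while the sub-Gaussianity of the centered positive part follows from a symmetrization/contraction argument.

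For the bound $\E[X^+] \le 1/2$, I would first deduce from $\E[e^{\lambda X}] \le e^{\lambda^2/2}$, valid for all $\lambda \in \R$, that $\E[X]=0$ and $\E[X^2] \le 1$. Taking logs and Taylor expanding at $\lambda = 0$ gives $\lambda \E[X] + \tfrac{\lambda^2}{2}\E[X^2] + O(\lambda^3) \le \tfrac{\lambda^2}{2}$; dividing by $\lambda$ and sending $\lambda \to 0^\pm$ forces $\E[X]=0$, after which matching the $\lambda^2$ terms yields $\E[X^2]\le 1$. Since $X$ is centered, \cref{prop:pos-part} combined with Cauchy--Schwarz gives $\E[X^+] = \tfrac{1}{2}\E[|X|] \le \tfrac{1}{2}\sqrt{\E[X^2]} \le \tfrac{1}{2}$.

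For the sub-Gaussianity of $X^+-\E[X^+]$, I would symmetrize. Let $X'$ be an independent copy of $X$. For any $\lambda \in \R$, Jensen's inequality applied to the inner expectation over $X'$ gives
$$\E\bigl[e^{\lambda(X^+-\E[X^+])}\bigr] = \E\bigl[e^{\lambda X^+} \cdot e^{-\lambda \E[X'^+]}\bigr] \le \E\bigl[e^{\lambda(X^+ - X'^+)}\bigr].$$
The random variable $X^+-X'^+$ is symmetric (swapping $X$ and $X'$ negates it), so its MGF equals $\E[\cosh(\lambda(X^+-X'^+))]$. Since $y\mapsto y^+$ is $1$-Lipschitz, $|X^+-X'^+| \le |X-X'|$, and since $\cosh$ is even and nondecreasing on $[0,\infty)$, we get $\cosh(\lambda(X^+-X'^+)) \le \cosh(\lambda(X-X'))$ pointwise. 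Using that $X-X'$ is also symmetric, this passes back to $\E[e^{\lambda(X^+-X'^+)}] \le \E[e^{\lambda(X-X')}]$.

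To finish, note that $X$ and $-X'$ are independent and each $1$-sub-Gaussian, hence their sum $X-X'$ is $\sqrt{2}$-sub-Gaussian by the additivity of the squared sub-Gaussian parameter, giving $\E[e^{\lambda(X-X')}]\le e^{\lambda^2}$. Chaining the inequalities yields $\E[e^{\lambda(X^+-\E[X^+])}]\le e^{\lambda^2} = e^{(\sqrt{2})^2\lambda^2/2}$, which is precisely the $\sqrt{2}$-sub-Gaussian condition for all $\lambda \in \R$. The argument is essentially routine once the symmetrization step is set up correctly; the one place to be careful is confirming that the $1$-Lipschitz contraction survives the passage through $\cosh$, which relies only on the evenness and monotonicity of $\cosh$ on $[0,\infty)$.
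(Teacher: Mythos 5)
Your proof is correct, and it takes a genuinely different route from the paper. For the second claim, the paper simply delegates to \cite[Lemma 5]{BDHT21a}, noting that that argument goes through with Gaussian replaced by sub-Gaussian once one knows $\mu^2 \le 1/3$; the constant $\sqrt 2$ there emerges from the specifics of that external computation (which involves the bound on $\mu$). Your argument is self-contained and structurally cleaner: the chain $\E[e^{\lambda(X^+-\E[X^+])}] \le \E[e^{\lambda(X^+-X'^+)}] = \E[\cosh(\lambda(X^+-X'^+))] \le \E[\cosh(\lambda(X-X'))] = \E[e^{\lambda(X-X')}] \le e^{\lambda^2}$ uses only Jensen, the $1$-Lipschitz property of $y\mapsto y^+$, and the fact that a symmetric difference has MGF equal to $\E[\cosh]$. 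Notably, your proof of the second claim does not use the bound $\E[X^+]\le 1/2$ at all, only the sub-Gaussianity and Lipschitz contraction, so it is in that sense more robust; and it generalizes verbatim to $X^+$ replaced by $f(X)$ for any $1$-Lipschitz $f$. The one place worth being careful about, which you handled correctly, is that the pointwise bound $\cosh(\lambda(X^+-X'^+)) \le \cosh(\lambda(X-X'))$ needs both evenness and monotonicity of $\cosh$ on $[0,\infty)$, and that passing from the contracted $\cosh$ back to an MGF requires the symmetry of $X-X'$.
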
 
\begin{proof}
Since $X$ is $1$-sub-Gaussian, note that $\E[X] = 0$ and that $\E[X^2] \leq
1$. Therefore, by \Cref{prop:pos-part}, we have $\mu := \E[X^+] = \E[|X|]/2
\leq \E[X^2]^{1/2}/2 \leq 1/2$ by H{\"o}lder. $\sqrt{2}$-sub-Gaussianity of
$X^+-\mu$ now follows verbatim from the proof of \cite[Lemma 5]{BDHT22}
using that $\mu^2 \leq 1/3$ and replacing Gaussian by sub-Gaussian. 
\end{proof}

\subsubsection{Logconcave Measures}

If a measure $\nu$ has a density that is a logconcave function, we call $\nu$
logconcave. Logconcave distributions have many useful analytical properties.
In particular, the marginals of logconcave random vectors are also
logconcave. 

\begin{theorem}[{\cite{Prekopa71}}]
\label{thm:log-marg}
Let $X \in \R^d$ be a logconcave random vector. Then, for any surjective
linear transformation $T: \R^d \rightarrow \R^k$, $TX$ is a logconcave random vector. 
\end{theorem}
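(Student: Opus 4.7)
The plan is to reduce to the case of coordinate projections and then invoke the Prékopa–Leindler inequality, which is the standard route to marginal logconcavity.

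\emph{Step 1: Reduce to a projection.} Since $T : \R^d \to \R^k$ is surjective, we can decompose $\R^d = \ker(T) \oplus W$, where $W$ is any complement of $\ker T$ of dimension $k$ on which $T$ acts as an isomorphism. Choosing a basis for $W$ and a basis for $\ker(T)$ yields a linear change of variables $S : \R^k \times \R^{d-k} \to \R^d$ such that $T \circ S$ is the projection $(y,z) \mapsto y$. Logconcavity of a random vector is preserved under invertible linear transformations (since the density transforms by multiplication by $|\det S^{-1}|$, a constant), so $X' := S^{-1} X$ is logconcave on $\R^d = \R^k \times \R^{d-k}$, and $TX$ is distributed as the projection $\pi_1 X'$ onto the first $k$ coordinates. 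It therefore suffices to show that the density of $\pi_1 X'$, which is the marginal
\[
f_{\pi_1 X'}(y) = \int_{\R^{d-k}} f_{X'}(y,z)\, dz,
\]
is logconcave on $\R^k$.

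\emph{Step 2: Verify the Prékopa–Leindler hypothesis.} Fix $y_1,y_2 \in \R^k$ and $\lambda \in (0,1)$, and set $y_\lambda := \lambda y_1 + (1-\lambda) y_2$. Define the auxiliary functions
\[
f(z) := f_{X'}(y_1, z), \qquad g(z) := f_{X'}(y_2, z), \qquad h(z) := f_{X'}(y_\lambda, z).
\]
Because $f_{X'}$ is logconcave on $\R^d$, for any $z_1, z_2 \in \R^{d-k}$ we have
\[
h\bigl(\lambda z_1 + (1-\lambda) z_2\bigr) = f_{X'}(y_\lambda,\, \lambda z_1 + (1-\lambda) z_2) \geq f_{X'}(y_1,z_1)^{\lambda} f_{X'}(y_2,z_2)^{1-\lambda} = f(z_1)^{\lambda} g(z_2)^{1-\lambda}.
\]
This is exactly the pointwise inequality that Prékopa–Leindler requires.

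\emph{Step 3: Conclude via Prékopa–Leindler.} Applying the Prékopa–Leindler inequality to $(f,g,h)$ on $\R^{d-k}$ gives
\[
\int_{\R^{d-k}} h(z)\, dz \;\geq\; \Bigl(\int_{\R^{d-k}} f(z)\, dz\Bigr)^{\lambda} \Bigl(\int_{\R^{d-k}} g(z)\, dz\Bigr)^{1-\lambda},
\]
which in terms of the marginal reads $f_{\pi_1 X'}(y_\lambda) \geq f_{\pi_1 X'}(y_1)^\lambda f_{\pi_1 X'}(y_2)^{1-\lambda}$. Hence $f_{\pi_1 X'}$, and therefore $f_{TX}$, is logconcave.

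The only nontrivial ingredient is Prékopa–Leindler itself, which we treat as a classical black box; the main conceptual step is simply recognizing that the logconcavity of $f_{X'}$ on the product space $\R^k \times \R^{d-k}$ gives the correct hypothesis for Prékopa–Leindler applied slicewise in the fiber variable $z$. The reduction in Step 1 is where a small amount of care is needed to make sure the change of variables does not destroy logconcavity, but this is immediate since the density simply gets multiplied by a positive constant.
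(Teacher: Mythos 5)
Your argument is correct and is the standard classical proof: reduce a surjective linear map to a coordinate projection by an invertible change of variables (which preserves logconcavity since the density only picks up a constant Jacobian factor), then apply Prékopa--Leindler slicewise in the fiber variable. The paper does not prove this theorem at all—it states it with a citation to Prékopa (1971)—so there is nothing internal to compare against, but your route is exactly the one developed in that reference and found in standard treatments.
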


The following gives a (essentially tight) bound on the maximum density of any
one dimensional logconcave $\mathbb{R}$ in terms of the variance.
\begin{lemma}[{\cite[Lemma 5.5]{LV07}}]
\label{lem:logconcave-density-bounded}
Let $X\in\mathbb{R}$ be a logconcave variable. Then its density function is upper bounded by $\frac{1}{\sqrt{\Var[X]}}$. 
\end{lemma}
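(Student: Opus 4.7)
The plan is to reduce the two-sided bound to two one-sided estimates on the half-lines split at the mode of $f$, and then combine them via the law of total variance. Since both $\Var[X]$ and $\max f$ are invariant under translation, I assume without loss of generality that the mode of $f$ lies at $0$, set $M := f(0) = \max f$, and let $p := \Pr[X \ge 0]$. Introducing the conditional variables $X^+ := (X \mid X \ge 0)$ and $X^- := (-X \mid X \le 0)$, both are non-negative logconcave random variables with mode at $0$ and maximum density $M/p$ and $M/(1-p)$, respectively. The law of total variance,
\[
\Var[X] = p\,\Var[X^+] + (1-p)\,\Var[X^-] + p(1-p)\bigl(\E X^+ + \E X^-\bigr)^2,
\]
reduces the task to controlling these conditional means and variances in terms of $M$ and $p$.

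The one-sided ingredients I need are the following: for any logconcave $Y \ge 0$ on $[0,\infty)$ with mode at $0$ and maximum density $M_Y$, \emph{(a)} $\E Y \le 1/M_Y$, and \emph{(b)} $\E[Y^2] \le 2(\E Y)^2$. Claim (a) is straightforward: the survival function $G(y) := \Pr[Y > y]$ is logconcave (a standard consequence of Pr\'ekopa--Leindler applied to $f_Y$), satisfies $G(0) = 1$ and has $(\log G)'(0^+) = -M_Y$, so concavity of $\log G$ gives $G(y) \le e^{-M_Y y}$, and integrating yields $\E Y = \int_0^\infty G(y)\,dy \le 1/M_Y$. Claim (b) is a Khinchin-type second-moment inequality and is the main obstacle of the proof.

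For claim (b) I rescale so that $\E Y = 1$ and compare $G$ against the standard exponential tail $G_0(y) := e^{-y}$. Both are logconcave on $[0,\infty)$, both equal $1$ at $y = 0$, and both integrate to $\E Y = 1$. Consequently $\log(G/G_0) = \log G + y$ is concave with value $0$ at $y = 0$, so it has at most one further zero; combined with the equal-integral constraint this forces either $G \equiv G_0$ or a unique crossing $y_0 > 0$ with $G \ge G_0$ on $[0, y_0]$ and $G \le G_0$ on $[y_0, \infty)$. In either case $(y - y_0)(G(y) - G_0(y)) \le 0$ pointwise, so
\[
\int_0^\infty y\,(G - G_0)\,dy \le y_0 \int_0^\infty (G - G_0)\,dy = 0,
\]
whence $\E[Y^2] = 2\int_0^\infty y\,G(y)\,dy \le 2\int_0^\infty y\,e^{-y}\,dy = 2$. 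Applying (a) and (b) to $X^\pm$ yields $\E X^+ \le p/M$, $\E X^- \le (1-p)/M$, and $\Var[X^\pm] \le (\E X^\pm)^2$. Substituting into the total-variance identity gives
\[
\Var[X] \le \frac{p^3 + (1-p)^3 + p(1-p)}{M^2} = \frac{1 - 2p(1-p)}{M^2} \le \frac{1}{M^2},
\]
which rearranges to $M \le 1/\sqrt{\Var[X]}$ as required.
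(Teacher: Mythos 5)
The paper does not prove this lemma itself---it cites it as \cite[Lemma 5.5]{LV07}---so your proof supplies an argument the paper omits, and it is correct. A few remarks on the structure. The total-variance decomposition at the mode, the exponential upper bound $G(y) \leq e^{-M_Y y}$ obtained from concavity of $\log G$ together with $(\log G)'(0^+) = -M_Y$, and the single-crossing comparison with the exponential tail are all sound; in the degenerate case $G\equiv G_0$ on an initial interval, concavity still forces $G\le G_0$ thereafter, so your pointwise inequality $(y-y_0)(G-G_0)\le 0$ holds in all cases. It is worth noting that your claim (b), $\E[Y^2]\le 2(\E Y)^2$ for nonnegative logconcave $Y$, is precisely a special case of the paper's \Cref{lem:pos-comp} (Fradelizi's comparison with the exponential), and in fact your crossing argument does not use the mode-at-$0$ hypothesis, so you have re-derived that comparison as an ingredient rather than invoking it. Two small points to tidy up if you wanted to polish this: (i) handle explicitly the boundary case $p\in\{0,1\}$ (one-sided support), where the conditional variable on the empty side is undefined---here the total-variance identity collapses to the surviving term and the bound still reads $\Var[X]\le 1/M^2$; and (ii) the evaluation $(\log G)'(0^+)=-M_Y$ uses that $f_Y$ is right-continuous at $0$ with value $M_Y$, which holds because the original density is continuous at its (interior) mode and non-increasing to the right of it. Your final bound $\Var[X]\le (1-2p(1-p))/M^2$ is actually slightly sharper than what the lemma asserts and correctly identifies the one-sided exponential as the equality case.
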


The above has an important consequence. If $X \in \R^n$ is logconcave and
isotropic, then for any vector $v \in \R^n \setminus \{0\}$, the random
variable $v^\T X$ has maximum density at most $1/{\sqrt{\Var[v^\T X]}} =
1/\|v\|_2$, where we have used $v^\T X$ is logconcave. 

By a result of Gr{\"u}nbaum, the mean of logconcave
measure is also an approximate median. In particular, any halfspace
containing the mean has measure at least $1/e$. We will use the following generalization of this result.

\begin{lemma}[{\cite{bertsimas_solving_2004}}] Let $X \in \R^n$ be a logconcave measure with mean $\E[X] = \mu$. Then for any $\theta \in \mathbb{S}^{n-1}$ and $t\in \mathbb{R}$, $\Pr[\theta^\T X \geq \theta^\T\mu -t ] \geq 1/e -|t|$.
\label{lem:grunbaum}
\end{lemma}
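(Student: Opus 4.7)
The plan is to reduce the claim to a statement about a one-dimensional logconcave marginal $Y := \theta^\T X$, and then combine the classical $t=0$ Gr{\"u}nbaum inequality with a uniform density bound on $Y$ to absorb the additive shift by $|t|$.

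As a first step, I apply Prekopa's theorem (\Cref{thm:log-marg}) to the surjective linear map $x \mapsto \theta^\T x$ to conclude that $Y$ is a one-dimensional logconcave random variable with $\E[Y] = \theta^\T \mu =: \mu_Y$. When $t \geq 0$, the event $\{Y \geq \mu_Y - t\}$ contains $\{Y \geq \mu_Y\}$, and the classical Gr{\"u}nbaum inequality for one-dimensional logconcave distributions (any halfspace through the mean carries mass at least $1/e$) yields $\Pr[\theta^\T X \geq \theta^\T \mu - t] \geq 1/e \geq 1/e - |t|$. When $t < 0$, set $s := -t = |t| > 0$ and decompose
\begin{align*}
\Pr[Y \geq \mu_Y + s] \;&=\; \Pr[Y \geq \mu_Y] \;-\; \int_{\mu_Y}^{\mu_Y + s} f_Y(y)\,dy \\
\;&\geq\; \tfrac{1}{e} \;-\; s \cdot \sup_y f_Y(y),
\end{align*}
where $f_Y$ is the logconcave density of $Y$ and the inequality combines Gr{\"u}nbaum with $\int_{\mu_Y}^{\mu_Y+s} f_Y \leq s \sup f_Y$. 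By \Cref{lem:logconcave-density-bounded}, $\sup_y f_Y(y) \leq 1/\sqrt{\Var[Y]}$, so under the paper's convention that $X$ has identity covariance ($\Var[\theta^\T X] = \|\theta\|^2 = 1$ since $\theta \in \mathbb{S}^{n-1}$) the density bound gives $\sup f_Y \leq 1$, and we conclude $\Pr[Y \geq \mu_Y + s] \geq 1/e - s$.

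The main obstacle is the density-bound step: the coefficient $1$ in front of $|t|$ only matches the lemma's statement provided $\Var[\theta^\T X] \geq 1$, which is the paper's implicit normalization (identity covariance on $X$). Without this, the same argument yields only the weaker $1/e - |t|/\sqrt{\Var[\theta^\T X]}$, so one needs to verify that every invocation of the lemma uses a random vector whose marginal variance along $\theta$ is at least $1$.
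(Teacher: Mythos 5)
Your proposal cannot be checked against a proof in the paper: \Cref{lem:grunbaum} is presented as a citation to \cite{bertsimas_solving_2004}, and the paper itself does not re-derive it. Your reconstruction --- projecting to the one-dimensional marginal $Y = \theta^\T X$ via \Cref{thm:log-marg}, invoking the $t=0$ Gr\"unbaum bound $\Pr[Y \geq \E[Y]] \geq 1/e$ for one-dimensional logconcave laws, and then absorbing the one-sided shift of size $|t|$ through the density bound $\sup f_Y \leq 1/\sqrt{\Var[Y]}$ of \Cref{lem:logconcave-density-bounded} --- is correct and is the standard route to the Bertsimas--Vempala lemma.

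You also correctly flag the genuine issue, which lies in the lemma's \emph{statement}, not in your proof. With no covariance hypothesis the coefficient $1$ in front of $|t|$ is simply false: your argument yields only $1/e - |t|/\sqrt{\Var[\theta^\T X]}$, and $\Var[\theta^\T X]$ can be made arbitrarily small for a logconcave $X$, pushing the claimed lower bound above the actual probability. The Bertsimas--Vempala source does assume isotropy. In the paper, the only invocation with $t \neq 0$ occurs in the proof of \Cref{lem:sample-mean-zero}, where it is applied to the isotropic vector $B$ (so $\Var[\theta^\T B] = 1$ for $\theta \in \mathbb{S}^{n-1}$); every other use takes $t = 0$ and requires no variance normalization at all. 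So the omission is harmless where the lemma is used, but you are right that the statement should carry an isotropy hypothesis --- or at least $\Cov(X) \succeq \mathrm{I}_n$ --- for the stated constant to hold.
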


We shall require the fact that logconcave random variables satisfy the following comparison inequality.

\begin{lemma}[{\cite{Fradelizi99}}]\label{lem:pos-comp}
Let $X \in \R_+$ logconcave with $\E[X]=\mu$ and let $Z$ have density
$e^{-x}$ (exponential distribution), $x \geq 0$. Then, for any convex
function $\phi: \R_+ \rightarrow \R$, $\E[\phi(X)] \leq \E[\phi(\mu Z)]$. In
particular, 
\begin{enumerate}
\item $\E[X^2] \leq 2\mu^2$.
\item $\E[e^{\lambda X}] \leq \frac{1}{1-\lambda\mu}$, $\lambda < 1/\mu$.  
\end{enumerate}
\end{lemma}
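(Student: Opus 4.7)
The lemma is Fradelizi's classical comparison inequality, and the cleanest approach I know is via Karlin's sign-change technique. Write $f$ for the density of $X$ and $g(x) = \mu^{-1}e^{-x/\mu}$ for the density of $\mu Z$ on $\R_+$. My plan is to first show that $f - g$ has a specific two-sign-change pattern, and then exploit this pattern against the convexity of $\phi$ via a linear-interpolation trick.

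For the sign pattern, observe that the log-ratio $u(x) := \log f(x) - \log g(x)$ is the sum of a concave function (by logconcavity of $f$) and an affine function ($-\log g$), hence concave on $\mathrm{supp}(f)$. In particular $\{f \geq g\} = \{u \geq 0\}$ is an interval, so $f - g$ has at most two sign changes on $\R_+$. Assuming $f \not\equiv g$ (else the inequality is trivial), the remaining candidate patterns are $(+,-)$, $(-,+)$, or $(-,+,-)$. I would rule out the one-crossing cases using the moment constraints $\int (f-g)\,dx = 0$ (both are densities) and $\int x(f-g)\,dx = 0$ (equal means): if $f - g$ had pattern $(+,-)$ with crossing at $c$, then $(x-c)(f-g)$ would be nonpositive pointwise yet its integral equals $\int x(f-g)\,dx - c\int (f-g)\,dx = 0$, forcing $f = g$ almost everywhere; the $(-,+)$ case is symmetric. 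Hence $f - g$ must follow pattern $(-,+,-)$ with sign changes at some $0 \leq a < b < \infty$.

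To conclude the main inequality, let $L$ be the affine function satisfying $L(a) = \phi(a)$ and $L(b) = \phi(b)$. Convexity of $\phi$ gives $\phi \geq L$ outside $[a,b]$ and $\phi \leq L$ on $[a,b]$, so $(\phi - L)$ has sign pattern $(+,-,+)$, matching $(g - f)$. Therefore $(\phi - L)(g - f) \geq 0$ pointwise, and
\[
\E[\phi(\mu Z)] - \E[\phi(X)] = \int (\phi - L)(g - f)\,dx + \int L(g - f)\,dx \;\geq\; 0,
\]
where the second integral vanishes because $L$ is affine and $f, g$ share zeroth and first moments.

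The two corollaries then follow by substitution. Taking $\phi(x) = x^2$ gives $\E[X^2] \leq \mu^2\E[Z^2] = 2\mu^2$, and taking $\phi(x) = e^{\lambda x}$ (convex for every $\lambda$) yields $\E[e^{\lambda X}] \leq \E[e^{\lambda\mu Z}] = \int_0^\infty e^{(\lambda\mu-1)x}\,dx = (1-\lambda\mu)^{-1}$ for $\lambda < 1/\mu$. The most delicate step I anticipate is handling the boundary of $\mathrm{supp}(f)$ when $X$ is bounded: there $u$ effectively jumps to $-\infty$, but the structural conclusion that $\{f \geq g\}$ is an interval remains valid (extending $u$ by $-\infty$ outside the support), and the sign-change argument carries through without modification.
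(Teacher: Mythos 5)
The paper does not prove this lemma---it is stated with a citation to Fradelizi, so there is no in-paper argument to compare against. Your reconstruction via a Karlin-style sign-change argument is correct and self-contained, and it is in the spirit of the cited literature. The key steps all check out: $u = \log f - \log g$ is concave (logconcave density minus affine), so $\{f \geq g\}$ is an interval and $f-g$ has at most two sign changes; the zeroth and first moment constraints $\int(f-g)=0$ and $\int x(f-g)=0$ correctly rule out a single crossing (the one-crossing test function $(x-c)(f-g)$ is single-signed with zero integral, forcing $f=g$ a.e.); and the affine interpolant $L$ through $(a,\phi(a))$ and $(b,\phi(b))$ makes $(\phi-L)(g-f)\geq 0$ pointwise while $\int L(g-f)=0$. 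The computation of both corollaries ($\E[Z^2]=2$ and $\int_0^\infty e^{(\lambda\mu-1)x}dx=(1-\lambda\mu)^{-1}$ for $\lambda\mu<1$) is correct, and your remark about $e^{\lambda x}$ being convex for all real $\lambda$ handles the full range $\lambda<1/\mu$. The boundary discussion (extending $u$ to $-\infty$ off $\mathrm{supp}(f)$) is the right fix. One minor omission, implicit but worth stating: the all-positive/all-negative patterns are excluded already by $\int(f-g)=0$, before one even reaches the moment argument.
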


\begin{lemma}\label{lem:lck}
For $X \in \R$ mean-zero and logconcave, we have $\E[X^2] \leq e\E[|X|]^2$.
\end{lemma}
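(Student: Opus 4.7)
The plan is to split the second moment by sign, apply the Fradelizi comparison inequality of \Cref{lem:pos-comp} on each half-line to bound each piece by a constant times the square of the corresponding half-mean, and then use a one-dimensional Gr\"unbaum-type inequality to lower bound the probability mass in each half-line.

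First, since $X$ is mean-zero, \Cref{prop:pos-part} gives $\E[X^+] = -\E[X^-] = \tfrac{1}{2}\E[|X|]$, so the goal reduces to bounding $\E[X^2] = \E[(X^+)^2] + \E[(X^-)^2]$ by a constant multiple of $\E[X^+]^2$. Set $p = \Pr[X \geq 0]$ and $q = \Pr[X \leq 0]$; since $X$ has a logconcave density, $p + q = 1$. The conditional law of $X$ given $X \geq 0$ has density $f_X/p$ restricted to $[0,\infty)$, which remains logconcave (restriction of a logconcave function to a convex set, renormalised), and it has mean $\E[X^+]/p$. Applying \Cref{lem:pos-comp}(1) to this non-negative logconcave conditional variable bounds $\E[X^2 \mid X \geq 0] \leq 2(\E[X^+]/p)^2$, which rearranges to $\E[(X^+)^2] \leq 2\E[X^+]^2/p$. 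The symmetric argument applied to $-X$ conditioned on $\{X \leq 0\}$ (non-negative logconcave with mean $\E[X^+]/q$) gives $\E[(X^-)^2] \leq 2\E[X^+]^2/q$. Summing,
\begin{equation*}
\E[X^2] \;\leq\; 2\E[X^+]^2\Bigl(\tfrac{1}{p}+\tfrac{1}{q}\Bigr) \;=\; \frac{2\E[X^+]^2}{p(1-p)}.
\end{equation*}

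To close the argument one needs a lower bound on $\min(p, 1-p)$. This is exactly the one-dimensional Gr\"unbaum inequality: applying \Cref{lem:grunbaum} with $\theta = \pm 1$, $\mu = 0$, and $t = 0$ yields $p, 1-p \geq 1/e$, and consequently $p(1-p) \geq (e-1)/e^2$. Plugging this into the display above and using $\E[|X|]^2 = 4\E[X^+]^2$ gives $\E[X^2] \leq \frac{e^2}{2(e-1)}\E[|X|]^2$, and a one-line check (equivalent to $e \geq 2$) shows that $\frac{e^2}{2(e-1)} \leq e$, finishing the proof. The only subtlety I anticipate is verifying that conditioning on a halfline genuinely produces a logconcave distribution in the form required by \Cref{lem:pos-comp}; everything else is bookkeeping.
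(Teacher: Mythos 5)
Your proof is correct and follows essentially the same route as the paper's: condition on the two halflines $\{X \geq 0\}$, $\{X \leq 0\}$, apply \Cref{lem:pos-comp} to each nonnegative logconcave conditional variable, and invoke \Cref{lem:grunbaum} to lower-bound the halfline masses. The only cosmetic difference is at the end: the paper bounds $1/p$ and $1/q$ by $e$ separately to get $\E[X^2] \le e\E[|X|]^2$ directly, whereas you bound the product $p(1-p)\ge (e-1)/e^2$ and obtain the slightly sharper intermediate constant $e^2/(2(e-1))\approx 2.15$ before relaxing it to $e$; either way the constant $e$ in the statement is confirmed. The "subtlety" you flag — that a halfline restriction of a logconcave density (renormalised) is again logconcave — is indeed the correct thing to check, and it holds since logconcave functions restricted to an interval remain logconcave.
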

\begin{proof}
Let $X_l := -X \mid X \leq 0$, $p_l = \Pr[X \leq 0]$ and $X_r := X_r \mid X
\geq 0$, $p_r = \Pr[X \geq 0]$. Note that $X_l,X_r$ are both non-negative
logconcave random variables and that $p_l,p_r \geq 1/e$ by
\Cref{lem:grunbaum}. By \Cref{prop:pos-part}, $p_l\E[X_l] =
\E[X^-] = \E[|X|]/2 = \E[X^+] = p_r\E[X_r]$. We now see that 
\begin{align*}
\E[X^2] = p_l \E[X_l^2] + p_r \E[X_r^2] \underbrace{\leq}_{\Cref{lem:pos-comp}} 2(p_l \E[X_l]^2 + p_r \E[X_r]^2) = \E[|X|](\E[X_l]+\E[X_r]) \leq e \E[|X|]^2.
\end{align*}
\end{proof} 
We will also require the following concentration inequality for sums of
non-negative logconcave random variables.
\begin{lemma}
\label{lem:sumlogconcave}
Let $X_1,\dots,X_n \in \R_+$ be independent non-negative logconcave random variables with mean $\mu$. Then, the following holds:
\begin{enumerate}
\item $\Pr[\sum_{i=1}^n X_i \geq (1+\eps) \mu n] \leq e^{-n(\eps-\ln(1+\eps))} \leq e^{-n(\frac{\eps^2}{2(1+\eps)^2})}$, $\eps > 0$.
\item $\Pr[\sum_{i=1}^n X_i \leq (1-\eps) \mu n] \leq e^{-n(-\ln(1-\eps)+\eps)} = e^{-n(\sum_{j=2}^\infty \eps^j/j)}$, $\eps \in [0,1]$. 
\end{enumerate}
\end{lemma}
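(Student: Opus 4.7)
The plan is to use a Cramér/Chernoff style moment generating function argument, where the role of the Gaussian (or bounded) MGF is replaced by that of the exponential distribution. Concretely, both halves follow by bounding $\E[e^{\pm \lambda X_i}]$ via the comparison inequality from \Cref{lem:pos-comp}: since $x \mapsto e^{\lambda x}$ is convex for every real $\lambda$, we may apply \Cref{lem:pos-comp} with $\phi(x) = e^{\lambda x}$ (for the upper tail, with $\lambda > 0$ and $\lambda\mu < 1$) and with $\phi(x) = e^{-\lambda x}$ (for the lower tail, with $\lambda > 0$), yielding
\[
\E[e^{\lambda X_i}] \leq \E[e^{\lambda \mu Z}] = \frac{1}{1-\lambda \mu}, \qquad \E[e^{-\lambda X_i}] \leq \E[e^{-\lambda \mu Z}] = \frac{1}{1+\lambda \mu},
\]
where $Z$ is the standard exponential random variable.

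For the upper tail, I would apply Markov's inequality to $e^{\lambda \sum_i X_i}$ and use independence to obtain
\[
\Pr\!\left[\textstyle\sum_{i=1}^n X_i \geq (1+\eps)\mu n\right] \leq e^{-\lambda(1+\eps)\mu n}(1-\lambda\mu)^{-n}.
\]
Substituting $s := \lambda\mu \in (0,1)$ and optimizing the log-objective $-s(1+\eps)n - n\ln(1-s)$ gives the minimizer $s = \eps/(1+\eps)$, for which the exponent evaluates to $n(\ln(1+\eps) - \eps)$; this yields the stated $e^{-n(\eps - \ln(1+\eps))}$ bound. For the secondary inequality, I would verify $\eps - \ln(1+\eps) \geq \eps^2/(2(1+\eps)^2)$ by elementary calculus: the difference vanishes at $\eps = 0$ and its derivative reduces to $\eps^2(2+\eps)/(1+\eps)^3 \geq 0$.

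The lower tail is entirely symmetric: applying Markov to $e^{-\lambda \sum_i X_i}$ with $\lambda > 0$ gives
\[
\Pr\!\left[\textstyle\sum_{i=1}^n X_i \leq (1-\eps)\mu n\right] \leq e^{\lambda(1-\eps)\mu n}(1+\lambda\mu)^{-n}.
\]
Setting $t := \lambda\mu > 0$ and optimizing $t(1-\eps)n - n\ln(1+t)$ yields the minimizer $t = \eps/(1-\eps)$ and evaluated exponent $n(\eps + \ln(1-\eps))$, which gives the stated $e^{-n(-\ln(1-\eps) - \eps)}$ bound. The Taylor expansion $-\ln(1-\eps) = \sum_{j\geq 1} \eps^j/j$ immediately supplies the series form $\sum_{j\geq 2} \eps^j/j$ for the exponent.

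There is no serious obstacle here; the only nontrivial ingredient is the MGF comparison, which is handed to us by \Cref{lem:pos-comp}. Everything else is the standard Chernoff optimization (now against the exponential MGF rather than a Gaussian one) together with a short convexity check for the simplified quadratic form of the upper tail.
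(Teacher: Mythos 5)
Your proof is correct and matches the paper's argument essentially exactly: both use Markov's inequality on $e^{\pm\lambda\sum_i X_i}$ together with the exponential MGF comparison from \Cref{lem:pos-comp} and the same optimal choices $\lambda\mu = \eps/(1+\eps)$ (upper tail) and $\lambda\mu=\eps/(1-\eps)$ (lower tail), with the only cosmetic difference being that you derive these choices by optimizing the Chernoff exponent while the paper simply plugs them in. As a side benefit, your computation makes clear that the optimized lower-tail exponent is $n\big(-\ln(1-\eps)-\eps\big)$, which is the quantity equal to $n\sum_{j\ge 2}\eps^j/j$ as claimed; the ``$+\eps$'' in the displayed bound of the lemma statement is a sign typo.
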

\begin{proof}
By homogeneity, we assume wlog that $\mu = 1$.

\noindent \textbf{Proof of 1.} Let $\lambda := \frac{\eps}{1+\eps}$. Then, 
\[
\Pr[\sum_{i=1}^n X_i \geq (1+\eps) n] \underbrace{\leq}_{\text{ Markov } } \E[e^{\lambda \sum_{i=1}^n X_i}] e^{-\lambda(1+\eps)n}
\underbrace{\leq}_{\Cref{lem:pos-comp}} (\frac{1}{1-\lambda})^n e^{-\lambda(1+\eps)n} = e^{-n(\eps-\ln(1+\eps))}.
\]

\noindent \textbf{Proof of 2.} Let $\lambda := \frac{\eps}{1-\eps}$. Then, 
\[
\Pr[\sum_{i=1}^n X_i \leq (1-\eps) n] \underbrace{\leq}_{\text{ Markov } } \E[e^{-\lambda \sum_{i=1}^n X_i}] e^{\lambda(1-\eps)n}
\underbrace{\leq}_{\Cref{lem:pos-comp}} (\frac{1}{1+\lambda})^n e^{-\lambda(1+\eps)n} = e^{-n(-\ln(1-\eps)-\eps)}.
\]
\end{proof}
Finally, we will require concentration of truncated sums.
\begin{lemma}
\label{lem:trun-logcon} 
Let $X_1,\dots,X_n \in \R$ be i.i.d. mean zero logconcave random variables with $\E[X_1^+]=\alpha$. Then, for $\eps \in [0,1/2]$, we have that
\begin{enumerate}
\item $\Pr[\sum_{i=1}^n X_i^+ \geq (1+\eps)^2 n\alpha] \leq e^{-\frac{n\eps^2}{3e}} + e^{-\frac{n\eps^2}{2e(1+\eps)}}$.
\item $\Pr[\sum_{i=1}^n X_i^+ \leq (1-\eps)^2 n\alpha] \leq e^{-\frac{n\eps^2}{2e}} + e^{-\frac{n(1-\eps)\eps^2}{2e}}$.
\end{enumerate}
\end{lemma}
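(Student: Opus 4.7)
The plan is to decompose $\sum_{i=1}^n X_i^+$ by conditioning on the sign pattern of the $X_i$'s. Let $p_+ := \Pr[X_1 > 0]$, which satisfies $p_+ \geq 1/e$ by \Cref{lem:grunbaum} applied to the mean-zero logconcave variable $X_1$ (with $\theta = 1$, $t = 0$; there is no atom at $0$ since $X_1$ has a density), and let $\mu_+ := \alpha/p_+ = \E[X_1 \mid X_1 > 0]$. Let $N := |\{i : X_i > 0\}|$, which is Binomial with parameters $(n, p_+)$. Conditional on $N$ and the set of positive indices, the corresponding values $Y_1, \dots, Y_N$ are i.i.d.\ non-negative logconcave with mean $\mu_+$, since the restriction of a logconcave density to a half-line remains logconcave. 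Since $\sum_i X_i^+ = \sum_{j=1}^N Y_j$, the task reduces to controlling a random-length sum of non-negative logconcave variables, for which \Cref{lem:sumlogconcave} is directly applicable.

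For the upper tail, I first apply the Chernoff bound~\eqref{eq:chernoff} to the Bernoulli sum $N$ to get $\Pr[N > (1+\eps) n p_+] \leq e^{-n p_+ \eps^2/3} \leq e^{-n\eps^2/(3e)}$, which yields the first term of the claim. On the complementary event $\{N \leq (1+\eps) n p_+\}$, I set $\tau := (1+\eps)^2 n p_+/N - 1 \geq \eps$ so that $(1+\eps)^2 n\alpha = (1+\tau) N \mu_+$. Applying \Cref{lem:sumlogconcave}(1) conditionally on $N$ gives
\[
\Pr\!\left[\sum_{j=1}^N Y_j \geq (1+\tau) N \mu_+ \,\Big|\, N\right] \leq \exp\!\left(-\tfrac{N\tau^2}{2(1+\tau)}\right).
\]
The identity $N(1+\tau) = (1+\eps)^2 n p_+$, together with $N\tau = (1+\eps)^2 n p_+ - N \geq \eps(1+\eps) n p_+$ (valid since $N \leq (1+\eps) n p_+$), makes this exponent at least $(N\tau)^2/(2(1+\eps)^2 n p_+) \geq n p_+ \eps^2/2 \geq n\eps^2/(2e)$, uniformly in $N$, which dominates the claimed second term $e^{-n\eps^2/(2e(1+\eps))}$.

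The lower tail proceeds symmetrically. Chernoff gives $\Pr[N < (1-\eps) n p_+] \leq e^{-n p_+ \eps^2/2} \leq e^{-n\eps^2/(2e)}$. On $\{N \geq (1-\eps) n p_+\}$, I write $(1-\eps)^2 n \alpha = (1-\tau) N \mu_+$ with $\tau := 1 - (1-\eps)^2 n p_+/N \geq \eps$. Applying \Cref{lem:sumlogconcave}(2) conditionally on $N$, and using $-\ln(1-\tau) - \tau \geq \tau^2/2$, yields a bound of $e^{-N\tau^2/2} \leq e^{-N\eps^2/2} \leq e^{-(1-\eps) n p_+ \eps^2/2} \leq e^{-n(1-\eps)\eps^2/(2e)}$, matching the claim.

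The main subtlety is obtaining a bound from \Cref{lem:sumlogconcave} that is uniform in the random quantity $N$: naively the deviation parameter $\tau$ depends on $N$, but the identity $N(1 \pm \tau) = (1 \pm \eps)^2 n p_+$ collapses this dependence and produces the clean uniform exponents above. Everything else is routine bookkeeping.
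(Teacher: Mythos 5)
Your decomposition — conditioning on the sign pattern, recognizing $N$ as Binomial$(n,p_+)$ with $p_+ \geq 1/e$ by Gr\"unbaum, and representing $\sum_i X_i^+$ as a random-length sum of i.i.d.\ non-negative logconcave variables with mean $\mu_+ = \alpha/p_+$ — is exactly the one the paper uses, as is the use of Chernoff for $N$ and \Cref{lem:sumlogconcave} for the conditional sum. The difference is in how you eliminate the randomness in $N$. The paper uses a monotonicity trick: since the $Y_j$ are non-negative, on the event $\{C \leq \floor{(1+\eps)pn}\}$ one has $\sum_{j=1}^{C} Y_j \leq \sum_{j=1}^{\floor{(1+\eps)pn}} Y_j$, so it suffices to bound a sum with a \emph{deterministic} number of terms. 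You instead condition on $N$ and chase a bound uniform in $N$ via the reparametrization $\tau = (1+\eps)^2 np_+/N - 1$, $N(1+\tau)=(1+\eps)^2np_+$.

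There is, however, a real slip in your upper-tail step. You quote \Cref{lem:sumlogconcave}(1) as giving
$\Pr\bigl[\sum_{j\le N} Y_j \ge (1+\tau)N\mu_+ \mid N\bigr] \le \exp\bigl(-\tfrac{N\tau^2}{2(1+\tau)}\bigr)$,
but the stated secondary bound in that lemma is $\exp\bigl(-\tfrac{N\tau^2}{2(1+\tau)^2}\bigr)$ — an extra factor $(1+\tau)$ in the denominator. With the literal bound, the exponent equals $\tfrac{N((1+\eps)^2np_+-N)^2}{2((1+\eps)^2np_+)^2}$, which tends to $0$ as $N\to 0$, so your claimed uniformity over $N\in(0,(1+\eps)np_+]$ does \emph{not} hold and the argument as written breaks. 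The version you wrote, $\exp\bigl(-\tfrac{N\tau^2}{2(1+\tau)}\bigr)$, is in fact true — it follows from the lemma's primary form $\exp\bigl(-N(\tau-\ln(1+\tau))\bigr)$ together with the elementary inequality $\tau - \ln(1+\tau) \ge \tfrac{\tau^2}{2(1+\tau)}$ — and with that form the exponent equals $\tfrac{((1+\eps)^2np_+-N)^2}{2(1+\eps)^2np_+}$, which is decreasing in $N$ and is minimized at $N=(1+\eps)np_+$, giving $np_+\eps^2/2 \ge n\eps^2/(2e)$, which indeed dominates the claimed bound. So the conclusion is salvageable, but you need to invoke the primary $\tau-\ln(1+\tau)$ form and justify the intermediate inequality rather than misquoting the lemma. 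The paper's monotonicity route sidesteps this issue entirely, which is why it is a bit cleaner; your reparametrization identity $N(1+\tau)=(1+\eps)^2np_+$ is a nice alternative device once the correct form of the bound is in hand.
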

\begin{proof}
Define $X_1',\dots,X_n' \in \R_+$ to be i.i.d. copies of $X_1 \mid X_1 \geq
0$ and let $p = \Pr[X_1 \geq 0]$, where $1-1/e \geq p \geq 1/e$
(\Cref{lem:grunbaum}). Let $C = \sum_{i=1}^n 1[X_i \geq 0]$, which a binomial
distribution with parameters $n$ and $p$. Since $X_1,\dots,X_n$ are i.i.d.,
$\sum_{i=1}^n X_i^+$ has the same law as $\sum_{i=1}^C X_i'$. Therefore,
by~\eqref{eq:chernoff} and \Cref{lem:sumlogconcave}, we have that
\begin{align*}
\Pr[\sum_{i=1}^C X_i' \geq (1+\eps)^2n\alpha] &\leq
\Pr[C \geq \ceil{(1+\eps)pn}] + \Pr[\sum_{i=1}^{\floor{(1+\eps)pn}} X_i' \geq (1+\eps)^2n\alpha] \\ 
&\leq e^{-\frac{np \eps^2}{3}} + e^{-\frac{(1+\eps)np\eps^2}{2(1+\eps)^2}} \leq e^{-\frac{n \eps^2}{3e}} + e^{-\frac{n \eps^2}{e(1+\eps)}},
\end{align*}
and
\begin{align*}
\Pr[\sum_{i=1}^C X_i' \leq (1-\eps)^2n\alpha] &\leq
\Pr[C \leq \floor{(1-\eps)pn}] + \Pr[\sum_{i=1}^{\ceil{(1-\eps)pn}} X_i' \leq (1-\eps)^2n\alpha]  \\
&\leq e^{-\frac{np \eps^2}{2}} + e^{-\frac{np(1-\eps)\eps^2}{2}} \leq e^{-\frac{n \eps^2}{2e}} + e^{-\frac{n(1-\eps)\eps^2}{2e}}.
\end{align*}
\end{proof}
\subsection{Khinchine Inequality}

\begin{lemma} 
\label{lem:khinchine}
Let $X_1,\dots,X_n \in \R$ be independent mean zero random variables
satisfying $\E[X_i^4] \leq 3 \E[X_i^2]^2 < \infty$, $\forall i$. Then, for
any scalars $a_1,\dots,a_n \in \R$, we have that $\sqrt{\frac13\E[|\sum_i a_i X_i|^4]}\leq \E[|\sum_i a_i X_i|^2] \leq
3 \E[|\sum_i a_i X_i|]^2$. 
\end{lemma}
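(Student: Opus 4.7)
The plan is to reduce to the normalized variables $Y_i := a_i X_i$, which are again independent, mean zero, and satisfy the fourth-moment hypothesis $\E[Y_i^4] \leq 3\E[Y_i^2]^2$ by homogeneity. Let $S := \sum_i Y_i$, and note $\E[S^2] = \sum_i \E[Y_i^2]$. I would then prove the two inequalities separately.

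For the upper bound $\E[S^4] \leq 3\E[S^2]^2$, I would simply expand
\[
\E[S^4] \;=\; \sum_{i,j,k,\ell} \E[Y_iY_jY_kY_\ell]
\]
and observe that by independence and $\E[Y_i]=0$, a term survives only when every distinct index occurs at least twice, which (for four indices) forces either all four equal or a pairing into two distinct pairs. Counting the $\binom{4}{2}=6$ orderings for each unordered pair (equivalently $3$ for each ordered pair $(i,j)$ with $i \neq j$), this gives
\[
\E[S^4] \;=\; \sum_i \E[Y_i^4] \;+\; 3\sum_{i\neq j} \E[Y_i^2]\E[Y_j^2].
\]
Applying the hypothesis $\E[Y_i^4] \leq 3\E[Y_i^2]^2$ absorbs the diagonal into the off-diagonal sum, yielding $\E[S^4] \leq 3 \bigl(\sum_i \E[Y_i^2]\bigr)^2 = 3\E[S^2]^2$, as desired.

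For the bound $\E[S^2] \leq 3\E[|S|]^2$, I would invoke Lyapunov's inequality, i.e.\ the log-convexity of $p \mapsto \log \E[|S|^p]$. Writing $2 = \tfrac{2}{3}\cdot 1 + \tfrac{1}{3}\cdot 4$ gives
\[
\E[|S|^2] \;\leq\; \E[|S|]^{2/3}\,\E[|S|^4]^{1/3}.
\]
Cubing both sides and substituting the fourth-moment bound just established,
\[
\E[S^2]^3 \;\leq\; \E[|S|]^2\,\E[S^4] \;\leq\; 3\E[|S|]^2\,\E[S^2]^2,
\]
and dividing through by $\E[S^2]^2$ (trivial if this is zero) produces exactly $\E[S^2] \leq 3\E[|S|]^2$.

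I do not anticipate a significant obstacle: the fourth-moment expansion is standard combinatorics, and the step from fourth to first moment is a textbook Lyapunov interpolation. The only care needed is ensuring the constant $3$ propagates cleanly, which it does because the hypothesis, the number of index-pairings, and the Lyapunov exponent all cooperate to give exactly the stated constant with no slack.
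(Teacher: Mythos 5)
Your proof is correct and takes essentially the same approach as the paper: expand the fourth moment, absorb the diagonal terms using the hypothesis $\E[X_i^4] \leq 3\E[X_i^2]^2$, then interpolate between the first and fourth moments (your Lyapunov step is the same Hölder interpolation $\E[Z^2] \leq \E[Z]^{2/3}\E[Z^4]^{1/3}$ that the paper uses).
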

\begin{proof}
Letting $Z := |\sum_i a_i X_i| \geq 0$, we wish to show $\sqrt{\Exp[Z^4]}\leq \E[Z^2] \leq 3 \E[Z]^2$. We first show that $\E[Z^4] \leq 3 \E[Z^2]^2$, proving the first part of our claim:
\begin{align*}
	\E[Z^4] &= \sum_{i,j,k,l} a_ia_ja_ka_l \E[X_iX_jX_kX_l] 
	= \sum_i a_i^4 \E[X_i^4] + \sum_{i \neq j} 3 a_i^2a_j^2 \E[X_i^2]\E[X_j^2] \\
	&= \sum_{i} a_i^4(\underbrace{\E[X_i^4]-3\E[X_i^2]^2}_{\leq 0}) + \sum_{i,j} 3a_i^2 a_j^2 \E[X_i^2]\E[X_j^2]
	\leq 3(\sum_i a_i^2\E[X_i^2])^2 = 3 \E[Z^2]^2.
\end{align*}
As a consequence, we have
\[
\E[Z^2] = \E[Z^{2/3} (Z^4)^{1/3}] \underbrace{\leq}_{\text{H{\"o}lder}} \E[Z]^{2/3} \E[Z^4]^{1/3}
\underbrace{\leq}_{\E[Z^4] \leq 3\E[Z^2]^2} 3^{1/3} \E[Z]^{2/3} \E[Z^2]^{2/3},
\]
which, after rearranging, gives $\E[|\sum_i a_i X_i|^2] \leq
3 \E[|\sum_i a_i X_i|]^2$.
\end{proof}

\subsubsection{Discrete Random Variables}

Here we list some of the moments of (DSU) for reference. 

\begin{proposition}[Discrete Symmetric Moments]
\label{prop:dsu}
For $k \geq 1$, let $U$ be uniformly distributed on $\{0,\pm 1/k,\dots,\pm
1\}$. Then, $\E[U^2] = \frac{k+1}{3k} \geq 1/3$, $\E[U^4]
= \frac{(k+1)(3k^2+3k-1)}{15 k^3}$ and $\E[U^4]/\E[U^2]^2 =
\frac{9(3k^2+3k-1)}{15 (k+1)k} \leq 2$.  
\end{proposition}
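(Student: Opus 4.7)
The statement is an elementary computation of the second and fourth moments of a discrete uniform random variable on a symmetric integer scaled grid, so the plan is just a direct calculation via standard power-sum formulas, together with a small inequality check for the final ratio bound.

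The plan is to first write $U$ as $J/k$, where $J$ is uniformly distributed on $\{-k,-k+1,\dots,k\}$, so that $\E[U^{2r}] = \frac{1}{(2k+1)k^{2r}}\sum_{j=-k}^k j^{2r} = \frac{2}{(2k+1)k^{2r}}\sum_{j=1}^k j^{2r}$ by symmetry (the $j=0$ term vanishes). I would then apply the classical identities
\[
\sum_{j=1}^k j^2 = \frac{k(k+1)(2k+1)}{6}, \qquad \sum_{j=1}^k j^4 = \frac{k(k+1)(2k+1)(3k^2+3k-1)}{30}.
\]
Substituting into $\E[U^2]$ gives $\E[U^2] = \frac{2}{(2k+1)k^2}\cdot \frac{k(k+1)(2k+1)}{6} = \frac{k+1}{3k}$, and since $k+1 \geq k$ we immediately get $\E[U^2] \geq 1/3$. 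Similarly, substituting into $\E[U^4]$ gives $\E[U^4] = \frac{2}{(2k+1)k^4}\cdot \frac{k(k+1)(2k+1)(3k^2+3k-1)}{30} = \frac{(k+1)(3k^2+3k-1)}{15k^3}$.

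For the ratio, I would just divide the two expressions:
\[
\frac{\E[U^4]}{\E[U^2]^2} = \frac{(k+1)(3k^2+3k-1)/(15k^3)}{(k+1)^2/(9k^2)} = \frac{9(3k^2+3k-1)}{15(k+1)k}.
\]
To check this is at most $2$, clear denominators: the inequality reduces to $9(3k^2+3k-1) \leq 30k(k+1)$, i.e. $27k^2+27k-9 \leq 30k^2+30k$, i.e. $0 \leq 3k^2+3k+9$, which is trivially true for all $k \geq 1$.

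There is essentially no obstacle here; the only thing to watch is the bookkeeping of the factors of $k$ and $2k+1$, and making sure to use the $2r$-th power sums (the odd power sums cancel by symmetry, which is why only even moments give nonzero contributions cleanly). Once the two power-sum identities are plugged in, both formulas and the ratio bound follow by algebraic simplification.
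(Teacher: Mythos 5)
Your computation is correct, and it is the natural route: reduce to the integer-valued variable $J = kU$ uniform on $\{-k,\dots,k\}$, exploit symmetry to drop odd contributions and the $j=0$ term, plug in the standard power-sum formulas for $\sum j^2$ and $\sum j^4$, and simplify. The paper states Proposition~\ref{prop:dsu} without proof (it is presented as a reference list of moments), so there is no argument in the paper to compare against, but your derivation is exactly the elementary calculation one would expect and all the algebra checks out, including the clearing-of-denominators step $9(3k^2+3k-1) \leq 30k(k+1) \iff 0 \leq 3k^2 + 3k + 9$ for the ratio bound.
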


\subsection{Rejection sampling}
In the proofs of \cref{thm:ipgap_centered,thm:ipgap_packing} we make use of a tool called rejection sampling.
This tool allows us to change the distribution of a random variable $X\sim \mathcal{D}$. It works by `accepting' an evaluation of the variable with a probability that depends on its value. Let us formally define a rejection sampling procedure first.

\begin{definition}
A rejection sampling procedure on a random variable $X$ is a randomized algorithm $\psi$ that, given a realization of $X$, outputs either $\mathsf{reject}$ or $\mathsf{accept}$.
\end{definition}
Now the variable $X$ conditioned on $\psi(X)=\mathsf{accept}$, will be distributed according to a probability $\mathcal{D}'$. We can choose the distribution $\mathcal{D}'$ by appropriately setting $\Pr[\psi(X)=\mathsf{accept}|X]$. This is formalized in the following lemma.

\begin{lemma}
	\label{lem:rejection-sampling-prelim}
	Let $X, Y$ be a random variable on some set $S$, with probability density functions $f_X$ and $f_Y$, and $\Support(Y)\subseteq \Support(X)$. Suppose that $\frac{f_Y(s)}{f_X(s)} \leq K$ for all $s\in S$. Then there exists a rejection sampling procedure $\psi$ with $\Law(X|\psi(X)=\mathsf{accept})=\Law(Y)$ and $\Pr[\psi(X)=\mathsf{accept}]= \frac1K$.
\end{lemma}
\begin{proof}
  Define $\psi$ such that:
  \begin{align*}
\Pr[\psi(X)=\mathsf{accept}|X=x]=\begin{cases}
\frac{f_Y(x)}{K f_X(x)}&x\in \Support(X) \\
0&\text{else}
\end{cases}.
  \end{align*}
This probability is well defined as $\frac{f_Y(x)}{K f_X(x)} \leq \frac{K}{K}=1$ for all $x\in \Support(X)$.  Now $\Pr[\psi(X)=\mathsf{accept}]=\int_{x\in \Support(X)}\frac{f_Y(x)}{Kf_X(x)}f_X(x) dx=\frac{1}{K}$. Call the variable $X$ conditioned $(\psi(X)=\mathsf{accept})$, $Z$.  Now:
	\begin{align*}
		f_Z(z)=\frac{ \frac{f_Y(x)}{K f_X(x)}f_X(x)}{\Pr[\psi(X)=\mathsf{accept}]}=\frac{ f_Y(x)}{K \cdot 1/K}=f_Y(x).
	\end{align*}
This proves the statement.
\end{proof}

\subsection{Combinatorics}
\Cref{thm:ipgap_centered,thm:ipgap_packing} rely on some properties of the optimal dual solution, that hold with high probability. We prove these by taking the union bound over all vectors in $\{0,1\}^n$ that contain at most $\alpha n$ zeroes. By applying the following lemma, we are able to upper bound the number of these vectors by $\exp(H(\alpha)n)$ where $H$ is the entropy function defined as $H(x)=-x\log(x)-(1-x)\log(1-x)$.

\begin{lemma}[{\cite[Theorem 3.1]{galvin_three_2014}}]
For all $\alpha \leq \frac12$ and all $n$,
\begin{align*}
\sum_{i=0}^{\floor{\alpha n}}\binom{n}{i}\leq \exp(H(\alpha)n).
\end{align*}
\label{lem:binomial_bound}
\end{lemma}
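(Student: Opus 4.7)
The plan is to prove this classical entropy bound on partial binomial sums by the standard probabilistic trick: embedding the sum into the total mass of a cleverly chosen binomial distribution.

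First, I would consider $n$ independent $\{0,1\}$ Bernoulli trials each with success probability exactly $\alpha$, and let $p_i := \binom{n}{i}\alpha^i(1-\alpha)^{n-i}$ denote the probability of observing exactly $i$ successes. Since these are probabilities, we trivially have
\begin{equation*}
\sum_{i=0}^{\lfloor \alpha n\rfloor} \binom{n}{i}\alpha^i(1-\alpha)^{n-i} \;\leq\; \sum_{i=0}^{n} p_i \;=\; 1.
\end{equation*}

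Next, I would argue that $\alpha^i(1-\alpha)^{n-i}$ is minimized over $i \in \{0,1,\dots,\lfloor\alpha n\rfloor\}$ at the right endpoint $i=\lfloor\alpha n\rfloor$. Writing $\alpha^i(1-\alpha)^{n-i} = (1-\alpha)^n \bigl(\tfrac{\alpha}{1-\alpha}\bigr)^i$ and using $\alpha \leq 1/2$, so $\tfrac{\alpha}{1-\alpha}\leq 1$, this expression is non-increasing in $i$. In particular, for every $i \leq \alpha n$,
\begin{equation*}
\alpha^i(1-\alpha)^{n-i} \;\geq\; \alpha^{\alpha n}(1-\alpha)^{(1-\alpha)n} \;=\; \exp(-H(\alpha)n).
\end{equation*}

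Combining the two displays and dividing through by $\exp(-H(\alpha)n)$ yields $\sum_{i=0}^{\lfloor\alpha n\rfloor}\binom{n}{i}\leq \exp(H(\alpha)n)$, as desired. Since every step is an elementary inequality, there is essentially no obstacle; the only subtlety is checking that the monotonicity step uses $\alpha \leq 1/2$ in the right direction (which is exactly the hypothesis of the lemma) and that we are comparing $i$ to the real number $\alpha n$ rather than to $\lfloor\alpha n\rfloor$, which is fine because the bound $\alpha^{\alpha n}(1-\alpha)^{(1-\alpha)n}$ at the idealized point $\alpha n$ is itself a lower bound on the value at $i=\lfloor\alpha n\rfloor$.
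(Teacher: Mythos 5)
Your proof is correct. Note that the paper does not actually supply a proof of this lemma---it is stated with a citation to Galvin's lecture notes \cite{galvin_three_2014} and used as a black box---so there is no in-paper argument to compare against. The argument you give (bound the partial binomial-distribution mass by $1$, then use $\alpha\leq\tfrac12$ to show $\alpha^i(1-\alpha)^{n-i}\geq\exp(-H(\alpha)n)$ for all $i\leq\alpha n$, and divide) is exactly the standard textbook proof and is also the one in the cited reference, so nothing is missing. One trivial cosmetic slip: you write that $\alpha^i(1-\alpha)^{n-i}$ is \emph{minimized} at $i=\lfloor\alpha n\rfloor$ but then correctly use $\exp(-H(\alpha)n)$, which is its value at the (possibly non-integer) point $i=\alpha n$; since the function is non-increasing on the reals, $\exp(-H(\alpha)n)$ is an even smaller lower bound, so the inequality you need still holds, and indeed you flag this yourself in the final sentence.
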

 \end{document}